\def\NZQ{\mathbb}               % the font for N,Z,Q,R,C
\def\NN{{\NZQ N}}
\def\ZZ{{\NZQ Z}}
\newtheorem{Theorem}{Theorem}[section]
\newtheorem{Lemma}[Theorem]{Lemma}
\newtheorem{Corollary}[Theorem]{Corollary}
\newtheorem{Proposition}[Theorem]{Proposition}
\newtheorem{Question}[Theorem]{Question}
\let\epsilon\varepsilon
\let\phi=\varphi
\let\kappa=\varkappa
\begin{document}

\title{The role of defect and splitting in finite generation of extensions of associated graded rings along a valuation}
\author{Steven Dale Cutkosky}
\thanks{partially supported by NSF}

\address{Steven Dale Cutkosky, Department of Mathematics,
University of Missouri, Columbia, MO 65211, USA}
\email{cutkoskys@missouri.edu}

\begin{abstract}
Suppose that $R$ is a 2 dimensional excellent local domain with quotient field $K$,  $K^*$ is a finite separable extension of $K$ and $S$ is a 2 dimensional local domain with quotient field
$K^*$ such that  $S$ dominates $R$. 
Suppose that $\nu^*$ is a valuation of $K^*$ such that 
 $\nu^*$ dominates $S$. Let $\nu$ be the restriction of $\nu^*$ to $K$.  The associated graded ring ${\rm gr}_{\nu}(R)$  was introduced by Bernard Teissier.  It plays an important role in local uniformization. We show in Theorem \ref{Theorem4} that the extension $(K,\nu)\rightarrow (K^*,\nu^*)$ of valued fields is without defect if and only if there exist regular local rings $R_1$ and $S_1$ such that
 $R_1$ is a local ring of a blow up of $R$, $S_1$ is a local ring of a blowup of $S$, $\nu^*$ dominates $S_1$, $S_1$ dominates $R_1$ and the associated graded ring  ${\rm gr}_{\nu^*}(S_1)$ is a finitely generated ${\rm gr}_{\nu}(R_1)$-algebra.
 
 We also investigate the role of splitting of the valuation $\nu$ in $K^*$ in finite generation of the extensions of associated graded rings along the valuation. We will say that $\nu$ does not split in $S$ if $\nu^*$ is the unique extension of $\nu$ to $K^*$ which  dominates $S$. We show  in Theorem \ref{ThmN2} that if $R$ and $S$ are regular local rings, $\nu^*$ has rational rank 1 and is not discrete and ${\rm gr}_{\nu^*}(S)$ is a finitely generated ${\rm gr}_{\nu}(R)$-algebra, then $\nu$ does not split in $S$. We give examples showing that such a strong statement is not true when $\nu$ does not satisfy these assumptions. As a consequence of Theorem \ref{ThmN2}, we deduce in Corollary \ref{CorN32} that if $\nu$ has rational rank 1 and is not discrete and if $R\rightarrow R'$ is a nontrivial sequence of quadratic transforms along $\nu$, then ${\rm gr}_{\nu}(R')$ is not a finitely generated ${\rm gr}_{\nu}(R)$-algebra. 
\end{abstract}

\maketitle

Suppose that $K$ is a field. Associated to a valuation $\nu$ of $K$  is a value group $\Phi_{\nu}$ and  a valuation ring $V_{\nu}$ with maximal ideal $m_{\nu}$. Let $R$ be a local domain with quotient field $K$. We say that $\nu$ dominates $R$ if $R\subset V_{\nu}$ and $m_{\nu}\cap R=m_R$ where $m_R$ is the maximal ideal of $R$. We have an associated semigroup $S^R(\nu)=\{\nu(f)\mid f\in R\}$, as well as the associated graded ring along the valuation
\begin{equation}\label{eqN31}
{\rm gr}_{\nu}(R)=\bigoplus_{\gamma\in \Phi_{\nu}}\mathcal P_{\gamma}(R)/\mathcal P^+_{\gamma}(R)=\bigoplus_{\gamma\in S^{R}(\nu)}\mathcal P_{\gamma}(R)/\mathcal P^+_{\gamma}(R)
\end{equation}
which is defined by Teissier in \cite{T1}. Here 
$$
\mathcal P_{\gamma}(R)=\{f\in R\mid \nu(f)\ge \gamma\}\mbox{ and }\mathcal P^+_{\gamma}(R)=\{f\in R\mid \nu(f)> \gamma\}.
$$ 
This ring plays an important role in local uniformization of singularities (\cite{T1} and \cite{T2}).
The ring ${\rm gr}_{\nu}(R)$ is a domain, but it is often not Noetherian, even when $R$ is.

Suppose that $K\rightarrow K^*$ is a finite extension of  fields and $\nu^*$ is a valuation which is an extension of $\nu$ to $K^*$. We have the classical indices
$$
e(\nu^*/\nu)=[\Phi_{\nu^*}:\Phi_{\nu}]\mbox{ and }f(\nu^*/\nu)=[V_{\nu^*}/m_{\nu^*}:V_{\nu}/m_{\nu}]
$$
as well as the defect  $\delta(\nu^*/\nu)$ of the extension. Ramification of valuations and the defect are discussed in Chapter VI of \cite{ZS2}, \cite{E} and Kuhlmann's papers \cite{Ku1} and \cite{Ku3}. A survey is given in Section 7.1 of \cite{CP}. By Ostrowski's lemma, if $\nu^*$ is the unique extension of $\nu$ to $K^*$, we have that
\begin{equation}\label{int3}
[K^*:K]=e(\nu^*/\nu)f(\nu^*/\nu)p^{\delta(\nu^*/\nu)}
\end{equation}
where $p$ is the characteristic of the residue field $V_{\nu}/m_{\nu}$. From this formula, the defect can be computed using Galois theory in an arbitrary finite extension. 
If  $V_{\nu}/m_{\nu}$ has characteristic 0, then $\delta(\nu^*/\nu)=0$ and  $p^{\delta(\nu^*/\nu)}=1$, so there is no defect. Further, if $\Phi_{\nu}=\ZZ$ and $K^*$ is separable over $K$ then there is no defect. 

If $K$ is an algebraic function field over a field $k$, then 
an algebraic local ring $R$ of $K$ is a local  domain which is essentially of finite type over $k$ and has $K$ as its field of fractions. 
In \cite{C}, it is shown that if $K\rightarrow K^*$ is a finite extension of algebraic function fields over a field $k$ of characteristic zero, $\nu^*$ is a valuation of $K^*$ (which is trivial on $k$) with restriction $\nu$ to $K$ and if $R\rightarrow S$ is an inclusion of algebraic regular local rings of $K$ and $K^*$ such that $\nu^*$ dominates $S$ and $S$ dominates $R$ then there exists a commutative diagram
\begin {equation}\label{int1}
\begin{array}{ccc}
R_1&\rightarrow &S_1\\
\uparrow&&\uparrow\\
R&\rightarrow&S
\end{array}
\end{equation}
where the vertical arrows are products of blowups of nonsingular subschemes along the valuation $\nu^*$ (monoidal transforms) and $R_1\rightarrow S_1$ is dominated by $\nu^*$ and is  a monomial mapping; that is, there exist regular parameters $x_1,\ldots,x_n$ in $R_1$, regular parameters  $y_1,\ldots,y_n$ in $S_1$, units $\delta_i\in S_1$, and a matrix $A=(a_{ij})$ of natural numbers  with $\mbox{Det}(A)\ne 0$ such that
 \begin{equation}\label{int2}
 x_i=\delta_i\prod_{j=1}^ny^{a_{ij}}\mbox{ for $1\le j\le n$}.
 \end{equation}
In \cite{CP}, it is shown that this theorem is true, giving a monomial form of the mapping (\ref{int2}) after appropriate blowing up  (\ref{int1}) along the valuation, if $K\rightarrow K^*$ is a separable extension of two dimension algebraic function fields over an algebraically closed field, which has no defect. This result is generalized to the situation of this paper, that is when $R$ is a  two dimensional excellent  local ring, in  \cite{C12}. However, it may  be that such  monomial forms do not exist, even after  blowing up, if the extension has defect, as is shown by examples in \cite{C2}.
 
 In the case when $k$ has characteristic zero and for separable defectless extensions of two dimensional algebraic function fields in positive characteristic, it is further shown in \cite{CP} that  the expressions (\ref{int1}) and (\ref{int2}) are stable under further simple sequences of blow ups along $\nu^*$ and the form of the matrix $A$ stably reflects invariants of the valuation. 

We always have an inclusion of graded domains 
${\rm gr}_{\nu}(R)\rightarrow {\rm gr}_{\nu^*}(S)$ and
the index of their quotient fields is 
\begin{equation}\label{int4}
[{\rm QF}({\rm gr}_{\nu^*}(S)):{\rm QF}({\rm gr}_{\nu}(R))]=e(\nu^*/\nu)f(\nu^*/\nu)
\end{equation}
 as shown in  Proposition 3.3 \cite{C11}. Comparing with Ostrowski's lemma (\ref{int3}), we see that  the defect has disappeared in  equation (\ref{int4}).

Even though  ${\rm QF}({\rm gr}_{\nu^*}(S))$ is finite over ${\rm QF}({\rm gr}_{\nu}(R))$, it is possible for ${\rm gr}_{\nu^*}(S)$ to not be a finitely generated ${\rm gr}_{\nu}(R)$-algebra. Examples showing this for extensions $R\rightarrow S$ of two dimensional algebraic local rings over arbitrary algebraically closed fields are given in Example 9.4 of \cite{CV1}.

It was shown by Ghezzi, H\`a and Kashcheyeva in \cite{GHK} for extensions of two dimensional algebraic function fields over an algebraically closed field $k$ of characteristic zero and later by Ghezzi and Kashcheyeva in \cite{GK} for defectless separable extensions of two dimensional algebraic functions fields over an algebraically closed field $k$ of positive characteristic that there exists a commutative diagram (\ref{int1}) such that
${\rm gr}_{\nu^*}(S_1)$ is a finitely generated ${\rm gr}_{\nu}(R_1)$-algebra. Further, this property is stable under further  suitable  sequences of blow ups. 

 In Theorem 1.6 \cite{C11}, it is shown that for algebraic regular local rings of arbitrary dimension, if the ground field $k$ is algebraically closed of characteristic zero, and the valuation has rank 1 and is zero dimensional ($V_{\nu}/m_{\nu}=k$) then we can also construct a commutative diagram (\ref{int1}) such that
${\rm gr}_{\nu^*}(S_1)$ is a finitely generated ${\rm gr}_{\nu}(R_1)$-algebra and  this property is stable under further  suitable sequences of blow ups.

An example is given in \cite{CP1} of an inclusion $R\rightarrow S$ in a separable defect extension of two dimensional algebraic function fields such that ${\rm gr}_{\nu^*}(S_1)$ is stably not  a finitely generated ${\rm gr}_{\nu}(R_1)$-algebra  in diagram (\ref{int1})
 under sequences of blow ups. This raises the question of whether the existence of a finitely generated extension of associated graded rings along the valuation implies that $K^*$ is a defectless extension of $K$.

We  find that we must impose the condition that $K^*$ is a separable extension of $K$ to obtain a positive answer to this question, as there are simple examples of inseparable defect extensions such that  ${\rm gr}_{\nu^*}(S)$  is  a finitely  generated ${\rm gr}_{\nu}(R)$-algebra, such as in the following example, which is Example 8.6 \cite{Ku1}.  Let $k$ be a field of characteristic $p>0$ and $k((x))$ be the field of formal power series over $k$, with the $x$-adic valuation $\nu_x$. Let $y\in k((x))$ be transcendental over $k(x)$ with $\nu_x(y)>0$. Let $\tilde y=y^{p}$, and $K=k(x,\tilde y)\subset  K^*=k(x,y)$. Let $\nu^*=\nu_x|K^*$ and $\nu=\nu_x|K$. Then we have equality of value groups
$\Phi_{\nu}=\Phi_{\nu^*}=\nu(x)\ZZ$ and equality of residue fields of valuation rings $V_{\nu}/m_{\nu}=V_{\nu^*}/m_{\nu^*}=k$, so $e(\nu^*/\nu)=1$ and $f(\nu^*/\nu)=1$. We have that $\nu^*$ is the unique extension of $\nu$ to $K^*$ since $K^*$ is purely inseparable over $K$. By Ostrowski's lemma (\ref{int3}),  
the extension $(K,\nu)\rightarrow (K^*,\nu^*)$ is a defect extension with defect $\delta(\nu^*/\nu)=1$. Let $R=k[x,\tilde y]_{(x,\tilde y)}\rightarrow S=k[x,y]_{(x,y)}$. Then we have equality
$$
{\rm gr}_{\nu}(R)=k[t]={\rm gr}_{\nu^*}(S)
$$
where $t$ is the class of $x$.

In this paper we show that the question does have a positive answer for separable extensions in the following theorem.

\begin{Theorem}\label{Theorem4} 
Suppose that $R$ is a 2 dimensional excellent local domain with quotient field $K$. Further suppose that $K^*$ is a finite separable extension of $K$ and $S$ is a 2 dimensional local domain with quotient field
$K^*$ such that  $S$ dominates $R$. 
Suppose that $\nu^*$ is a valuation of $K^*$ such that 
 $\nu^*$ dominates $S$. Let $\nu$ be the restriction of $\nu^*$ to $K$.  Then the extension $(K,\nu)\rightarrow (K^*,\nu^*)$ is without defect if and only if there exist regular local rings $R_1$ and $S_1$ such that
 $R_1$ is a local ring of a blow up of $R$, $S_1$ is a local ring of a blowup of $S$, $\nu^*$ dominates $S_1$, $S_1$ dominates $R_1$ and ${\rm gr}_{\nu^*}(S_1)$ is a finitely generated ${\rm gr}_{\nu}(R_1)$-algebra.

\end{Theorem}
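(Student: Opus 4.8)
The plan is to establish the two implications separately, using throughout that the defect $\delta(\nu^*/\nu)$ is an invariant of the valued field extension $(K,\nu)\rightarrow(K^*,\nu^*)$ alone, and in particular is unchanged when $R$ is replaced by a local ring of a blow up dominated by $\nu$ and $S$ by a local ring of a blow up dominated by $\nu^*$ (the fields $K$, $K^*$ and the valuations $\nu$, $\nu^*$ do not move). Thus each direction may freely be preceded by blowing up along $\nu^*$, and in the forward direction the monomialization machinery will produce regular $R_1$, $S_1$ automatically.

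\emph{Sufficiency of no defect.} Assume $(K,\nu)\rightarrow(K^*,\nu^*)$ is without defect. Since $R$ is a $2$-dimensional excellent local domain, $K^*/K$ is finite separable, $S$ is $2$-dimensional with $S$ dominating $R$ and $\nu^*$ dominating $S$, the monomialization theorem for two-dimensional excellent local rings \cite{C12} (the excellent generalization of \cite{CP}) furnishes a commutative diagram \eqref{int1} in which the vertical arrows are products of blow ups along $\nu^*$, the rings $R_1$ and $S_1$ are regular, and $R_1\rightarrow S_1$ is monomial in the sense of \eqref{int2}; moreover one may pass to the stable ``strongly monomial'' form of \cite{C12}. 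It then remains to check that for such a strongly monomial defectless extension the ring $\mathrm{gr}_{\nu^*}(S_1)$ is a finitely generated $\mathrm{gr}_{\nu}(R_1)$-algebra. Here one adapts the argument of Ghezzi and Kashcheyeva \cite{GK} (which generalizes \cite{GHK}): the monomial relations $x_i=\delta_i\prod_j y_j^{a_{ij}}$ pass to initial forms, and since $\det(A)\ne 0$ they force the semigroup $S^{S_1}(\nu^*)$ to be generated over $S^{R_1}(\nu)$ by $\nu^*(y_1),\dots,\nu^*(y_n)$ together with finitely many explicit values attached to the residue extension; assembling a finite generating set of $\mathrm{gr}_{\nu^*}(S_1)$ over $\mathrm{gr}_{\nu}(R_1)$ from the initial forms of the $y_j$, of the units $\delta_i$, and of finitely many elements realizing the finite residue field extension $V_{\nu^*}/m_{\nu^*}$ over $V_{\nu}/m_{\nu}$ then completes this direction. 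Absence of defect is precisely what guarantees that these finitely many data leave no ``hidden'' growth in the extension of associated graded rings.

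\emph{Necessity of no defect.} Conversely, suppose $R_1$, $S_1$ are regular local rings as in the statement and $\mathrm{gr}_{\nu^*}(S_1)$ is a finitely generated $\mathrm{gr}_{\nu}(R_1)$-algebra, and suppose for contradiction that $\delta(\nu^*/\nu)\ge 1$. Replacing the data $(R,S)$ by $(R_1,S_1)$, we may assume the finite generation holds at the bottom of the tower. Since $K^*/K$ is separable and has defect, the structure theory of defect in dimension two (\cite{Ku1}, \cite{CP}, \cite{C2}, \cite{CP1}) shows that, after a further blow up along $\nu^*$, the extension carries a ``defect datum'': a sequence $g_n$ of elements of (a localization of) $S$ with $\nu^*(g_n)$ strictly increasing, whose initial forms $\mathrm{in}_{\nu^*}(g_n)$ cannot be expressed in terms of $\mathrm{gr}_{\nu}(R)$ together with $\mathrm{in}_{\nu^*}(g_1),\dots,\mathrm{in}_{\nu^*}(g_{n-1})$ — concretely, the ``defect element'' is approximated in value by elements of $K$ but never captured, and each approximation level contributes a genuinely new algebra generator. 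This produces a strictly ascending chain of $\mathrm{gr}_{\nu}(R)$-subalgebras of $\mathrm{gr}_{\nu^*}(S)$, contradicting finite generation. Conceptually this is the same phenomenon as the example of \cite{CP1}; from the present viewpoint it also parallels the monomialization obstruction of \cite{C2}, since finite generation of the graded extension, suitably exploited, would put $R\rightarrow S$ into the monomial form \eqref{int2} that a defect extension cannot attain.

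\emph{The main obstacle.} The necessity direction is where the real work lies. Finite generation of $\mathrm{gr}_{\nu^*}(S_1)$ over $\mathrm{gr}_{\nu}(R_1)$ is hypothesized at only one stage of the tower, while the defect is an asymptotic invariant visible in the Henselization; the heart of the proof is to convert ``$\delta\ge 1$'' into a concrete non-stabilizing chain of subalgebras already at that stage, while simultaneously controlling the residue field extension, the value group extension (in particular whether $\nu^*$ is discrete or of larger rational rank), and the limit key polynomial carrying the defect. The separability hypothesis is used exactly here: by the purely inseparable example recalled above (Example 8.6 of \cite{Ku1}) it cannot be dropped, so the argument must use that a minimal defect subextension of $K^*/K$ is, locally along $\nu^*$, of Artin--Schreier type, and then follow and generalize the mechanism of \cite{CP1} by which such a configuration forces non-finite-generation of $\mathrm{gr}_{\nu^*}$ over $\mathrm{gr}_{\nu}$ at every stage of blowing up.
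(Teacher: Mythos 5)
The forward implication (no defect implies existence of $R_1\rightarrow S_1$ with finitely generated extension of graded rings) is acceptable as you present it: it is a citation to the monomialization/toroidalization results, and indeed the paper disposes of it by quoting Theorems 4.3 and 4.4 of \cite{C12} (and \cite{GHK}, \cite{GK} in the function field case). Your remark that the defect depends only on $(K,\nu)\rightarrow (K^*,\nu^*)$ and so is insensitive to blowing up is also correct and is used implicitly in the paper.

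The necessity direction, however, has a genuine gap, and it is located exactly where you say ``the real work lies.'' Your plan is: assume $\delta(\nu^*/\nu)\ge 1$, invoke a ``structure theory of defect in dimension two'' to produce a sequence $g_n$ whose initial forms are each genuinely new algebra generators over ${\rm gr}_{\nu}(R)$, and conclude non-finite-generation from the resulting ascending chain. No such general structure theorem is available in the sources you cite: \cite{CP1} is a single explicit Artin--Schreier example, not a mechanism that attaches a non-stabilizing chain of subalgebras of ${\rm gr}_{\nu^*}(S)$ to every separable defect extension, and the reduction of an arbitrary separable defect extension to an Artin--Schreier ``minimal defect subextension'' lives in the Henselization and would still leave you to prove that the limit key polynomial forces new generators of the graded ring at the finite stage $S_1$ where finite generation is hypothesized. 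Asserting that ``each approximation level contributes a genuinely new algebra generator'' is a restatement of the conclusion, not a proof of it. The paper takes a different and essentially quantitative route: it first reduces (via Abhyankar's inequality, Corollary 18.7 of \cite{E} for the discrete rank $1$ case and Theorem 3.7 of \cite{C12} for rational rank $2$) to the case where $\nu^*$ has rational rank $1$ and is not discrete; it then uses the finite generation hypothesis \emph{positively}, through generating sequences (Theorem \ref{Theorem1*} and Proposition \ref{Prop3}), to show that the generating sequences of $R$ and $S$ eventually match term by term, with the value group and residue field indices stabilizing at exactly $e(\nu^*/\nu)$ and $f(\nu^*/\nu)$; finally it exhibits regular parameters with $u=\gamma x^{a}$, $v=x^{b}f$ where $a=e(\nu^*/\nu)$, $d=\overline\nu(f\bmod x)=1$ and $[S_1/m_{S_1}:R_1/m_{R_1}]=f(\nu^*/\nu)$, so that the local degree formula $ad[S_1/m_{S_1}:R_1/m_{R_1}]=e(\nu^*/\nu)f(\nu^*/\nu)p^{\delta(\nu^*/\nu)}$ of Proposition \ref{Prop15} forces $p^{\delta(\nu^*/\nu)}=1$. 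If you want to salvage your outline, you would need to either supply the missing general ``defect datum'' theorem (which is not in the literature you cite) or switch to an argument, like the paper's, that converts finite generation into a computation of the local degree.
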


We immediately obtain  the following corollary for two dimensional algebraic function fields.

\begin{Corollary}Suppose that $K\rightarrow K^*$ is a finite separable extension of two dimensional algebraic function fields over a field $k$ and  $\nu^*$ is a  valuation of $K^*$ with restriction $\nu$ to $K$. Then the extension $(K,\nu) \rightarrow (K^*,\nu^*)$ is without defect if and only if there exist  algebraic regular local rings $R$ of $K$ and $S$ of $K^*$ such that $\nu^*$ dominates $S$,  $S$ dominates $R$ and ${\rm gr}_{\nu^*}(S)$ is a finitely generated  ${\rm gr}_{\nu}(R)$-algebra.
\end{Corollary}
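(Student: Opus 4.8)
The plan is to deduce the corollary from Theorem \ref{Theorem4}, using two routine facts: an algebraic local ring of a function field is excellent, being essentially of finite type over a field; and a local ring of a blowup of an algebraic local ring of $K$ is again an algebraic local ring of $K$, since a blowup is birational and of finite type over the base. Throughout I take all valuations of function fields to be nontrivial and trivial on $k$, so that $\nu=\nu^*|_K$ is nontrivial.

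For the forward implication I would assume there is no defect and split into two cases. If $\nu$ dominates some $2$-dimensional algebraic local domain $R_0$ of $K$, I would let $C$ be the integral closure of $R_0$ in $K^*$ — a finite $R_0$-module contained in $V_{\nu^*}$ — set $\mathfrak q=m_{\nu^*}\cap C$, which is a maximal ideal of $C$ as it lies over $m_{R_0}$, and put $S_0=C_{\mathfrak q}$; then $S_0$ is a $2$-dimensional local domain of $K^*$ that $\nu^*$ dominates and that dominates $R_0$, so Theorem \ref{Theorem4} applies to $R_0\to S_0$ ($R_0$ is excellent and $K^*/K$ is separable) and, there being no defect, produces regular local rings $R_1,S_1$ — local rings of blowups of $R_0,S_0$ — with $\nu^*$ dominating $S_1$ dominating $R_1$ and ${\rm gr}_{\nu^*}(S_1)$ finitely generated over ${\rm gr}_{\nu}(R_1)$; these $R_1,S_1$ are the required algebraic regular local rings. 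If $\nu$ dominates no such ring, then the center $\xi$ of $\nu$ on a projective model $X$ of $K/k$ has $\dim\mathcal O_{X,\xi}=1$ (it cannot be $2$ by assumption, nor $0$ since $\mathcal O_{X,\xi}=K$ would make $\nu$ trivial), so localizing the normalization of $\mathcal O_{X,\xi}$ exhibits $\nu$ as dominating a discrete valuation ring (DVR) $V$ with fraction field $K$; since $V\subseteq V_{\nu}\subsetneq K$ and a DVR has no proper overring other than its fraction field, $V_{\nu}=V$, hence $\Phi_{\nu}\cong\ZZ$ and therefore $\Phi_{\nu^*}\cong\ZZ$ (the index $[\Phi_{\nu^*}:\Phi_{\nu}]$ is finite). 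In this case I would take $R=V_{\nu}$ and $S=V_{\nu^*}$, which are algebraic regular one-dimensional local rings of $K$ and $K^*$ ($V_{\nu^*}$ is a localization of the integral closure of $V_{\nu}$ in $K^*$) with $\nu^*$ dominating $S$ and $S$ dominating $R$; here ${\rm gr}_{\nu}(V_{\nu})$ and ${\rm gr}_{\nu^*}(V_{\nu^*})$ are polynomial rings in one variable over the residue fields $V_{\nu}/m_{\nu}\subseteq V_{\nu^*}/m_{\nu^*}$, with the variable of the former going to a unit times a power of the variable of the latter, so ${\rm gr}_{\nu^*}(V_{\nu^*})$ is a finitely generated ${\rm gr}_{\nu}(V_{\nu})$-algebra because the residue field extension is finite. (Note this second case is automatically defectless by separability, consistent with the hypothesis.)

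For the reverse implication, I would start from algebraic regular local rings $R$ of $K$ and $S$ of $K^*$ with $\nu^*$ dominating $S$, $S$ dominating $R$, and ${\rm gr}_{\nu^*}(S)$ finitely generated over ${\rm gr}_{\nu}(R)$. If $\dim R=\dim S=2$, then Theorem \ref{Theorem4} applied with $R_1=R$, $S_1=S$ (trivially local rings of blowups of themselves) gives that the extension is without defect. Otherwise $\dim S\le 1$, say; then $S$ is a DVR (not a field, since $\nu^*$ dominates it nontrivially), so $V_{\nu^*}=S$, hence $\nu^*$ is discrete of rank one, $\Phi_{\nu^*}\cong\ZZ$ and $\Phi_{\nu}\cong\ZZ$, and since $K^*/K$ is separable the extension is without defect by the remark recalled in the introduction. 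The case $\dim R\le 1$ is identical.

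The only genuine work should be in the forward direction: in the first case, verifying that the integral-closure and localization steps keep one inside $V_{\nu^*}$ and preserve the hypotheses of Theorem \ref{Theorem4}; in the second, the standard computation of the associated graded rings of the two discrete valuation rings. Both are routine, and the organizing idea is the dichotomy according to whether $\nu$ is centered at a curve or at a point on a model.
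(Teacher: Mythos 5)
Your proof is correct and follows the paper's intended route: the paper derives this corollary ``immediately'' from Theorem \ref{Theorem4}, and your argument simply supplies the routine verifications (an algebraic local ring of a function field is excellent, and local rings of blowups of algebraic local rings are again algebraic local rings), together with a careful treatment of the degenerate cases in which $R$ or $S$ is one-dimensional, where the ring in question is forced to be the (discrete) valuation ring itself so that $\Phi_{\nu}\cong\ZZ$ and separability rules out defect. These edge cases are left implicit in the paper, so your write-up is a faithful, slightly more detailed version of the same argument.
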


We  see from Theorem \ref{Theorem4} that the  defect, which is completely lost in the extension of quotient fields of the associated graded rings along the valuation (\ref{int4}), can be recovered from knowledge of all extensions of associated graded rings along the valuation of  regular local rings $R_1\rightarrow S_1$ within the field extension which dominate $R\rightarrow S$ and are dominated by the valuation.

The fact that there exists $R_1\rightarrow S_1$ as in the conclusions of the theorem if the assumptions of the theorem hold and the extension is without defect  is proven within 2-dimensional algebraic function fields over an algebraically closed field in \cite{GHK} and \cite{GK},  and in the generality of the assumptions of Theorem \ref{Theorem4} in Theorems 4.3 and 4.4 of \cite{C12}.
Further, if the assumptions of the theorem hold and the defect $\delta(\nu^*/\nu)\ne 0$, then the value group  $\Phi_{\nu^*}$ is not finitely generated  by Theorem 7.3 \cite{CP} in the case of algebraic function fields over an algebraically closed field. With  the full generality of the hypothesis  of Theorem \ref{Theorem4} , the defect is zero by  Corollary 18.7 \cite{E}
in the case of discrete, rank 1 valuations and the defect is zero by Theorem 3.7 \cite{C12} in the case of rational rank 2 valuations, so  by Abhyankar's inequality, Proposition 2 \cite{Ab1} or Appendix 2 \cite{ZS2}, if the defect  $\delta(\nu^*/\nu)\ne 0$, then the value group
$\Phi_{\nu^*}$ has rational rank 1 and is not discrete and $V_{\nu^*}/m_{\nu^*}$ is algebraic over $S/m_S$. Thus to prove Theorem \ref{Theorem4}, we have reduced to proving the following proposition, which we  establish in this paper.

\begin{Proposition}\label{Prop1} 
Suppose that $R$ is a 2 dimensional excellent local domain with quotient field $K$. Further suppose that $K^*$ is a finite separable extension of $K$ and $S$ is a 2 dimensional local domain with quotient field
$K^*$ such that  $S$ dominates $R$. 
Suppose that $\nu^*$ is a valuation of $K^*$ such that 
 $\nu^*$ dominates $S$. Let $\nu$ be the restriction of $\nu^*$ to $K$. 
 
 Suppose that  $\nu^*$ has rational rank 1 and $\nu^*$ is not discrete. Further suppose that 
  there exist regular local rings $R_1$ and $S_1$ such that
 $R_1$ is a local ring of a blow up of $R$, $S_1$ is a local ring of a blowup of $S$, $\nu^*$ dominates $S_1$, $S_1$ dominates $R_1$ and ${\rm gr}_{\nu^*}(S_1)$ is a finitely generated ${\rm gr}_{\nu}(R_1)$-algebra.
Then the defect $\delta(\nu^*/\nu)=0$.
\end{Proposition}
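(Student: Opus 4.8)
The plan is to argue by contradiction: assume $\delta(\nu^*/\nu)>0$ and show that ${\rm gr}_{\nu^*}(S_1)$ cannot be a finitely generated ${\rm gr}_{\nu}(R_1)$-algebra. Since $e(\nu^*/\nu)$, $f(\nu^*/\nu)$ and $\delta(\nu^*/\nu)$ depend only on the valued field extension $(K,\nu)\rightarrow(K^*,\nu^*)$, I may replace $R,S$ by $R_1,S_1$ and so assume from the outset that $R\subset S$ are two-dimensional regular local rings with quotient fields $K\subset K^*$, $\nu^*$ dominates $S$, $S$ dominates $R$, ${\rm gr}_{\nu^*}(S)$ is a finitely generated ${\rm gr}_{\nu}(R)$-algebra, $\nu^*$ has rational rank $1$ and is not discrete, and (as recalled just before the Proposition) $V_{\nu^*}/m_{\nu^*}$ is algebraic over $S/m_S$. (They are two-dimensional because a rank $1$ non-discrete valuation has a $0$-dimensional center on every normal birational model.) In particular $\Phi_{\nu}\subseteq\Phi_{\nu^*}$ are non-finitely-generated subgroups of $\QQ$, and $S/m_S$ is a \emph{finite} extension of $R/m_R$, since it embeds in $V_{\nu^*}/m_{\nu^*}$, which is finite over $V_{\nu}/m_{\nu}$ and hence algebraic over $R/m_R$.

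Next I would extract from the finite generation hypothesis the information it contributes in each graded degree. Because ${\rm gr}_{\nu^*}(S)$ is graded by $S^S(\nu^*)\subset\Phi_{\nu^*}$ and is finitely generated over the graded domain ${\rm gr}_{\nu}(R)$, it is generated over ${\rm gr}_{\nu}(R)$ by finitely many homogeneous elements $z_1,\dots,z_m$ of degrees $\gamma_1,\dots,\gamma_m\in S^S(\nu^*)$. Comparing degrees (and using that both rings are domains) forces
\[
S^S(\nu^*)=S^R(\nu)+\langle\gamma_1,\dots,\gamma_m\rangle ,
\]
where $\langle\gamma_1,\dots,\gamma_m\rangle$ is the sub-semigroup of $\Phi_{\nu^*}$ generated by the $\gamma_i$, and comparing degree-zero components forces $S/m_S=(R/m_R)[z_k:\gamma_k=0]$. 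Together with $[{\rm QF}({\rm gr}_{\nu^*}(S)):{\rm QF}({\rm gr}_{\nu}(R))]=e(\nu^*/\nu)f(\nu^*/\nu)<\infty$ from (\ref{int4}), finite generation thus pins down, uniformly in the degree, how much ${\rm gr}_{\nu^*}(S)$ can exceed ${\rm gr}_{\nu}(R)$ graded piece by graded piece; the key consequence I would use is the displayed equation, which says that $S^S(\nu^*)$ is obtained from $S^R(\nu)$ by adjoining only finitely many semigroup generators.

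I would then bring in the theory of generating sequences (key polynomials) for a rational rank $1$ valuation dominating a two-dimensional regular local ring: there are generating sequences $\{P_i\}_{i\ge 0}$ in $R$ and $\{Q_j\}_{j\ge 0}$ in $S$ which generate $S^R(\nu)$ and $S^S(\nu^*)$ respectively, and whose initial forms generate ${\rm gr}_{\nu}(R)$ and ${\rm gr}_{\nu^*}(S)$ over the respective residue fields; because $\nu^*$ is non-discrete of rational rank $1$, these sequences are infinite with infinitely many essential steps. The heart of the proof is to show that when $\delta(\nu^*/\nu)>0$ the generating sequence $\{Q_j\}$ of $S$ contributes, at infinitely many levels, a genuinely new element of $S^S(\nu^*)$ not lying in $S^R(\nu)+\langle\gamma_1,\dots,\gamma_m\rangle$ (or, should the value groups and semigroups be insufficiently sensitive, a new residue-field contribution), so that ${\rm gr}_{\nu}(R)\rightarrow{\rm gr}_{\nu^*}(S)$ cannot be generated by generators of bounded degree. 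For this I would invoke the structure of separable defect extensions of two-dimensional excellent local domains worked out in \cite{C12} (Theorems 4.3 and 4.4) and in Section 7 of \cite{CP}, according to which a nonzero defect forces exactly this unbounded proliferation in the comparison of the two generating sequences; this contradicts the finiteness extracted in the previous paragraph, forcing $\delta(\nu^*/\nu)=0$, and, together with the reductions recorded before the Proposition, it also proves Theorem \ref{Theorem4}.

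The step I expect to be the main obstacle is the last one: tracking a nonzero defect quantitatively through the two generating sequences and showing that it obstructs finite generation. I expect the separability of $K^*/K$ to be indispensable here, since inseparable defect can leave the associated graded rings unchanged, as in the example $K=k(x,y^p)\subset K^*=k(x,y)$ with ${\rm gr}_{\nu}(R)={\rm gr}_{\nu^*}(S)$. I anticipate separability entering through an analysis of the defect in terms of a pseudo-Cauchy sequence of algebraic type in $K$ (in the sense of Kaplansky and Kuhlmann) whose pseudo-limit generates the defect part of $K^*$ over $K$: the minimal polynomial of this limit behaves non-multiplicatively under $\nu^*$ precisely because of the defect, and it is this failure that is transcribed, via the generating sequences, into the infinitely many new generators of ${\rm gr}_{\nu^*}(S)$ over ${\rm gr}_{\nu}(R)$.
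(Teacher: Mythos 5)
There is a genuine gap at what you yourself identify as the main obstacle: the claim that a nonzero defect forces ``unbounded proliferation'' of new semigroup or residue-field generators in $\{Q_j\}$ relative to ${\rm gr}_{\nu}(R)$ is exactly the content of the Proposition, and the references you cite for it do not contain it. Theorems 4.3 and 4.4 of \cite{C12} prove the \emph{converse} direction (no defect implies finite generation after suitable blowing up), and Theorem 7.3 of \cite{CP} only shows that a nonzero defect forces $\Phi_{\nu^*}$ to be non-finitely-generated --- which is automatic here, since a non-discrete rational rank $1$ group is never finitely generated, so it yields no contradiction with your displayed equation $S^S(\nu^*)=S^R(\nu)+\langle\gamma_1,\dots,\gamma_m\rangle$ (that equation is perfectly consistent with both semigroups having infinitely many generators). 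Your fallback via pseudo-Cauchy sequences and the non-multiplicativity of the minimal polynomial of a pseudo-limit is a plausible program but is not carried out, so the proof as written is circular at its core. Your preliminary reductions (passing to $R_1,S_1$, finiteness of the degree-by-degree comparison) are correct but are far weaker than what is actually needed.

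The paper's proof runs in the opposite, direct, direction and never analyzes the defect itself: it uses the local degree formula of Proposition \ref{Prop15} (Proposition 3.4 of \cite{C12}), $ad\,[S_1/m_{S_1}:R_1/m_{R_1}]=e(\nu^*/\nu)f(\nu^*/\nu)p^{\delta(\nu^*/\nu)}$, which holds for any sufficiently dominant pair $R_1\rightarrow S_1$ in the prescribed monomial form $u=\gamma x^a$, $v=x^bf$. The real work (Proposition \ref{Prop3}) is to show that finite generation forces the two generating sequences to \emph{synchronize}: for $j\gg 0$ one has $\beta_{\sigma_{r+j}}=\gamma_{\tau_{s+j}}$, $\overline m_{\sigma_{r+j}}=\overline n_{\tau_{s+j}}$, $d_{\sigma_{r+j}}=e_{\tau_{s+j}}$, with the group index stabilizing at $e(\nu^*/\nu)$ and the residue field index at $f(\nu^*/\nu)$. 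Tracking strict transforms of the $P_{\sigma_{r+j}}$ through quadratic transforms (Lemmas \ref{Lemma1}, \ref{Lemma2}, \ref{Lemma3}) then produces $R_{\sigma_r}\rightarrow S_{\tau_s}$ with $x_{\sigma_r}=\phi u_{\tau_s}^{\lambda}$ and $y_{\sigma_r}=\overline\epsilon u_{\tau_s}^{\alpha}(v_{\tau_s}+u_{\tau_s}\Omega)$, so that $a=\lambda=e(\nu^*/\nu)$, $d=1$, and $[S_{\tau_s}/m_{S_{\tau_s}}:R_{\sigma_r}/m_{R_{\sigma_r}}]=f(\nu^*/\nu)$; substituting into the formula gives $p^{\delta(\nu^*/\nu)}=1$. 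If you want to salvage your contradiction strategy, you would essentially have to prove this synchronization statement anyway, so the direct route is the efficient one.
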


Another factor in the question of finite generation of extensions of associated graded rings along a valuation is the splitting of $\nu$ in $K^*$. We will say that $\nu$ does not split in $S$ if $\nu^*$ is the unique extension of $\nu$ to $K^*$ such that $\nu^*$ dominates $S$.  After a little blowing up, we can always obtain non splitting, as the following lemma shows.

\begin{Lemma}\label{LemmaN66} Given an extension $R\rightarrow S$ as in the hypotheses  of Theorem \ref{Theorem4},  there exists a normal local ring $R'$ which is a local ring of a blow up of $R$ such that $\nu$ dominates $R'$ and if 
$$
\begin{array}{lll}
R_1&\rightarrow&S_1\\
\uparrow&&\uparrow\\
R&\rightarrow&S
\end{array}
$$
is a commutative diagram of normal local rings, where $R_1$ is a local ring of a blow up of $R$ and $S_1$ is a local ring of a blow up of $S$, $\nu^*$ dominate $S_1$ and $R_1$ dominates $R'$,  then $\nu$ does not split in $S_1$.
\end{Lemma}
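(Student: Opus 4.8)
The plan is to reduce the assertion to a separation property among the finitely many extensions of $\nu$ to $K^*$. Write $\nu^*=\nu_1^*,\nu_2^*,\dots,\nu_g^*$ for all the extensions of $\nu$ to the finite extension $K^*/K$; if $g=1$ the conclusion is immediate (any $R'$ that is normal, a local ring of a blow up of $R$, and dominated by $\nu$ works), so assume $g\ge 2$. Let $W_i=V_{\nu_i^*}$, let $V=V_\nu$, and let $\overline V$ be the integral closure of $V$ in $K^*$. Then $\overline V$ is a semilocal Pr\"ufer domain, $\overline V=\bigcap_{i=1}^g W_i$, and its maximal ideals $M_i=m_{W_i}\cap\overline V$ ($1\le i\le g$) are pairwise distinct with $\overline V_{M_i}=W_i$ (Chapter VI of \cite{ZS2}). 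My first step is to produce a normal local ring $R'$, a local ring of a blow up of $R$, dominated by $\nu$, with the property that the ideals $M_1\cap\overline{R'},\dots,M_g\cap\overline{R'}$ of $\overline{R'}$ are pairwise distinct, where $\overline{R'}$ denotes the integral closure of $R'$ in $K^*$.

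Granting such an $R'$, the lemma will follow quickly. Suppose we are given a diagram as in the statement. Since $R'\subseteq R_1\subseteq S_1$ and $S_1$ is a normal local domain with quotient field $K^*$, we get $\overline{R'}\subseteq S_1$. Let $\mu$ be any extension of $\nu$ to $K^*$ that dominates $S_1$; then $\mu=\nu_i^*$ for some $i$. As $\nu$ dominates $R'$ we have $R'\subseteq V$, hence $\overline{R'}\subseteq\overline V$, and therefore the center of $\mu$ on $\overline{R'}$ is
\[
m_\mu\cap\overline{R'}=(m_{W_i}\cap\overline V)\cap\overline{R'}=M_i\cap\overline{R'}.
\]
On the other hand $\mu$ dominates $S_1\supseteq\overline{R'}$, so $m_\mu\cap\overline{R'}=m_{S_1}\cap\overline{R'}$; applying the same computation to $\nu^*=\nu_1^*$, which dominates $S_1$ by hypothesis, gives $M_1\cap\overline{R'}=m_{S_1}\cap\overline{R'}$. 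Hence $M_i\cap\overline{R'}=M_1\cap\overline{R'}$, and by the choice of $R'$ this forces $i=1$, i.e. $\mu=\nu^*$. Thus $\nu^*$ is the only extension of $\nu$ to $K^*$ dominating $S_1$, which is exactly the statement that $\nu$ does not split in $S_1$.

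To construct $R'$, I would argue as follows. Since $\overline V$ is semilocal, the Chinese Remainder Theorem provides elements $\theta_1,\dots,\theta_g\in\overline V$ with $\theta_i\in M_i$ and $\theta_i-1\in M_k$ for all $k\ne i$. Each $\theta_i$ is integral over $V$, hence satisfies a monic polynomial with coefficients in $V$; altogether these are finitely many elements of $V$, each of the form $a/b$ with $a,b\in R$ and $\nu(a)\ge\nu(b)$. Successively blowing up $R$ along the ideals $(a,b)$ and passing to the local ring of the center of $\nu$ makes every such $a/b$ regular; normalizing the resulting ring (module-finite over it, since $R$ is excellent) and localizing again at the center of $\nu$ produces a normal local ring $R'$ which is a local ring of a blow up of $R$, is dominated by $\nu$, and contains all those coefficients. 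Then each $\theta_i$ satisfies a monic polynomial over $R'$, so $\theta_i\in\overline{R'}$; and for $i\ne j$ we have $\theta_i\in M_i\cap\overline{R'}$ but $\theta_i\notin M_j$ (since $\theta_i-1\in M_j$), whence $M_i\cap\overline{R'}\ne M_j\cap\overline{R'}$, as required.

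The heart of the argument is the reduction in the first two paragraphs: once the $g$ distinct extensions of $\nu$ acquire distinct centers on $\overline{R'}$, normality of $S_1$ forces $\overline{R'}\subseteq S_1$ and pins the center on $\overline{R'}$ of every extension of $\nu$ dominating $S_1$ to the single ideal $m_{S_1}\cap\overline{R'}$, so at most one such extension can exist. The construction of $R'$ itself is routine, using only that $\overline V$ is a semilocal ring (so that the Chinese Remainder Theorem applies) and the standard fact that finitely many elements of $V_\nu$ become regular on a suitable blow up of $R$ at the center of $\nu$. The one point needing a little care is keeping $R'$ normal while remaining a local ring of a blow up of $R$, which is handled by normalizing and invoking excellence of $R$.
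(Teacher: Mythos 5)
Your argument is essentially the paper's: the paper likewise uses the Chinese Remainder Theorem on the semilocal integral closure $T$ of $V_{\nu}$ in $K^*$ to find an element $u$ separating the center of $\nu^*$ from the other centers, adjoins the coefficients of its integral equation over $V_{\nu}$ to $R$, and normalizes and localizes to get $R'$. Your version is correct; it merely separates all the centers pairwise rather than just $m_1$ from the rest, and it writes out explicitly the final verification (that $\overline{R'}\subseteq S_1$ by normality, pinning every dominating extension's center to $m_{S_1}\cap\overline{R'}$) which the paper leaves implicit.
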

Lemma \ref{LemmaN66} will be proven in Section \ref{SecLocDeg}.

We have the following theorem.

\begin{Theorem}\label{ThmN2}    Suppose that $R$ is a 2 dimensional excellent regular local ring with quotient field $K$. Further suppose that $K^*$ is a finite separable extension of $K$ and $S$ is a 2 dimensional regular local ring with quotient field
$K^*$ such that  $S$ dominates $R$. 
Suppose that $\nu^*$ is a valuation of $K^*$ such that 
 $\nu^*$ dominates $S$. Let $\nu$ be the restriction of $\nu^*$ to $K$.        Further suppose that $\nu^*$ has rational rank 1 and $\nu^*$ is not discrete. Suppose that ${\rm gr}_{\nu^*}(S)$ is a finitely generated ${\rm gr}_{\nu}(R)$-algebra. Then $S$ is a localization of the integral closure of $R$ in $K^*$, the defect $\delta(\nu^*/\nu)=0$ and $\nu^*$ does not split in $S$.  
\end{Theorem}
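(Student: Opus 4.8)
The plan is to dispose of the defect statement immediately, reduce the localization statement to a single finite-generation obstruction for a valuation of rational rank one that is not discrete, and then deduce non-splitting from a variant of that obstruction. First, since $R$ and $S$ are (trivially) the local rings of blow ups of themselves, Proposition \ref{Prop1} applies with $R_1=R$ and $S_1=S$ and gives $\delta(\nu^*/\nu)=0$. As $\Phi_{\nu}$ has finite index in $\Phi_{\nu^*}$, the valuation $\nu$ is also of rational rank one and not discrete (a rank one group containing $\Phi_\nu\cong\ZZ$ with finite index would itself be cyclic). Since $K^*/K$ is finite and separable, passing to a henselization $(K^h,\nu^h)$ of $(K,\nu)$ decomposes $K^*\otimes_K K^h$ into a product of henselian valued fields, and applying Ostrowski's lemma (\ref{int3}) to each factor yields the fundamental equality
\[
\sum_i e(\nu^*_i/\nu)f(\nu^*_i/\nu)p^{\delta(\nu^*_i/\nu)}=[K^*:K],
\]
the sum being over all extensions $\nu^*_i$ of $\nu$ to $K^*$. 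Combined with the index formula (\ref{int4}), this will let me control splitting through the identity $e(\nu^*/\nu)f(\nu^*/\nu)=[{\rm QF}({\rm gr}_{\nu^*}(S)):{\rm QF}({\rm gr}_{\nu}(R))]$.

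Next I would show that $S$ is a localization of $\overline R$, the integral closure of $R$ in $K^*$. Since $R$ is excellent, $\overline R$ is module-finite over $R$, hence a two dimensional normal semilocal domain with ${\rm QF}(\overline R)=K^*$; since $S$ is regular, hence integrally closed, and dominates $R$, we have $\overline R\subseteq S$, and putting $\mathfrak q=m_S\cap\overline R$ (a maximal ideal of $\overline R$) we obtain two dimensional excellent normal local domains $A:=\overline R_{\mathfrak q}\subseteq S$, both dominated by $\nu^*$, with $S$ dominating $A$. The assertion to be proved is $S=A$, which then forces $A$ to be regular. Since a commutative algebra that is finitely generated over a subring is finitely generated over every larger intermediate subring, ${\rm gr}_{\nu^*}(S)$ is a finitely generated ${\rm gr}_{\nu^*}(A)$-algebra, and $S=A$ will follow from the key statement: \emph{if $T\subsetneq T'$ is a proper birational extension of two dimensional excellent normal local domains both dominated by a valuation $\mu$ of rational rank one that is not discrete, then ${\rm gr}_\mu(T')$ is not a finitely generated ${\rm gr}_\mu(T)$-algebra} (apply it with $(T,T',\mu)=(A,S,\nu^*)$).

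I expect this key statement to be the main obstacle. I would prove it by comparing generating sequences. By the classical theory of Zariski and Abhyankar, the sequence of normalized quadratic transforms of a two dimensional local ring along $\mu$ is totally ordered, and because $\mu$ has rational rank one and is not discrete (hence is not divisorial) its union is the valuation ring $V_\mu$; this lets one factor $T\subseteq T'$ through such transforms and choose generating sequences $P_0,P_1,\dots$ of $\mu$ in $T$ and $Q_0,Q_1,\dots$ of $\mu$ in $T'$. Non-discreteness forces the value semigroups $S^{T}(\mu)\subseteq S^{T'}(\mu)$ to require infinitely many generators, and after one nontrivial transform the sequence $(Q_j)$ is strictly finer than $(P_i)$. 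One then verifies, by a degree and value bookkeeping inside the graded pieces $\mathcal P_{\gamma}(T')/\mathcal P^+_{\gamma}(T')$, that the images in ${\rm gr}_\mu(T')$ of infinitely many of the $Q_j$ (or of suitable Rees-type homogeneous elements built from them) cannot lie in the ${\rm gr}_\mu(T)$-subalgebra generated by any finite set; this combinatorial comparison is the technical heart of the argument.

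Finally I would prove that $\nu^*$ does not split in $S=A=\overline R_{\mathfrak q}$. Suppose $\mu^*\ne\nu^*$ is a second extension of $\nu$ to $K^*$ dominating $S$; then $\mu^*$ also has center $\mathfrak q$ on $\overline R$, and both $\nu^*$ and $\mu^*$ appear in the fundamental equality. Since $\delta(\nu^*/\nu)=0$ and $e(\mu^*/\nu)f(\mu^*/\nu)p^{\delta(\mu^*/\nu)}\ge 1$, we get $[{\rm QF}({\rm gr}_{\nu^*}(S)):{\rm QF}({\rm gr}_{\nu}(R))]=e(\nu^*/\nu)f(\nu^*/\nu)\le[K^*:K]-1$. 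Because $\nu^*\ne\mu^*$ while both dominate $S$, there is $g\in m_S$ with $0<\nu^*(g)\ne\mu^*(g)$ (a unit of $S$ would have value $0$ for both). The remaining step is to show that, when $S=\overline R_{\mathfrak q}$ and $\nu^*$ has rational rank one and is not discrete, this degree deficit cannot be realized with ${\rm gr}_{\nu^*}(S)$ finitely generated over ${\rm gr}_{\nu}(R)$: the part of $K^*/K$ invisible to ${\rm QF}({\rm gr}_{\nu}(R))\subseteq{\rm QF}({\rm gr}_{\nu^*}(S))$ cannot be absorbed by a proper birational step (there is none, by the previous paragraphs), so — by a generating-sequence comparison parallel to the key statement, but now recording $\mu^*$-values alongside $\nu^*$-values of a generating sequence of $\nu$ in $R$ lifted to $S$ and combined with elements such as $g$ — it must be spread over infinitely many independent homogeneous elements of ${\rm gr}_{\nu^*}(S)$, contradicting finite generation. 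Hence no such $\mu^*$ exists and $\nu^*$ does not split in $S$, completing the proof.
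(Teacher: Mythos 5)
Your reduction of the defect statement to Proposition \ref{Prop1} (taking $R_1=R$, $S_1=S$) is fine, and the reduction of the localization statement to showing $A=\overline R_{\mathfrak q}=S$ is a legitimate setup. The problem is that the two assertions you then rely on --- the ``key statement'' that a proper birational extension $T\subsetneq T'$ of two dimensional excellent normal local domains dominated by a non-discrete rational rank one valuation $\mu$ never has ${\rm gr}_\mu(T')$ finitely generated over ${\rm gr}_\mu(T)$, and the final claim that a second extension $\mu^*$ would force the ``degree deficit'' to be ``spread over infinitely many independent homogeneous elements'' --- are precisely the hard content of the theorem, and you do not prove either of them. ``Degree and value bookkeeping inside the graded pieces'' describes a hoped-for proof rather than giving one; note that the analogous statements are \emph{false} when $\nu^*$ is discrete or has rational rank $2$ (the paper gives explicit counterexamples at the end of Section 4), so any such bookkeeping must use non-discreteness and rational rank one in an essential way, and nothing in your sketch indicates where. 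Moreover your key statement, specialized to regular $T$, is essentially Corollary \ref{CorN32}, which the paper deduces \emph{from} Theorem \ref{ThmN2}, not the other way around; and for $T=A$ merely normal the generating-sequence machinery of Theorem \ref{Theorem1*} is not even available.

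The mechanism the paper actually uses is different and is the missing idea. From finite generation one has Proposition \ref{Prop3} (in particular $\beta_{\sigma_{r+j}}=\gamma_{\tau_{s+j}}$) and the expressions (\ref{eqN2})--(\ref{eqN3}), and these are used to show that each $P_{\sigma_{r+j}}$ with $j>0$ is irreducible in $\hat S$: a factorization $P_{\sigma_{r+j}}=fg$ in $\hat S$ would express ${\rm in}_{\nu^*}(P_{\sigma_{r+j}})$ as a product of initial forms of values strictly less than $\gamma_{\tau_{s+j}}$, forcing ${\rm in}_{\nu^*}(Q_{\tau_{s+j}})$ into $S/m_S[{\rm in}_{\nu^*}(Q_{\tau_0}),\ldots,{\rm in}_{\nu^*}(Q_{\tau_{s+j-1}})]$, which is impossible. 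This analytic irreducibility then does both remaining jobs. If $S$ were not a localization of $\overline R$, Zariski's Main Theorem would give $m_RS=fN$ with $f\in m_S$ and $N$ an $m_S$-primary ideal, so $f$ would divide every $P_{\sigma_{r+j}}$, contradicting irreducibility together with the fact that the $P_{\sigma_{r+j}}$ have distinct values. If $\nu$ split in $S$, the strict transform of ${\rm Spec}(S/(P_{\sigma_i}))$ under a suitable blow up would, after base change to $\hat S$, be a birational finite cover of ${\rm Spec}(\hat S/(P_{\sigma_i}))$ containing $r>1$ points over $m_S$; analytic irreducibility of $P_{\sigma_i}$ plus Henselianity of the complete local ring makes that cover local, a contradiction. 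To salvage your outline you would have to supply an actual proof of your key statement and of the final splitting step; the analytic-irreducibility argument above is what replaces both.
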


We give  examples showing that the condition rational rank 1 and discrete on $\nu^*$ in Theorem \ref{ThmN2} are necessary.

As an immediate consequence of  Theorem \ref{ThmN2}, we obtain the following corollary.

\begin{Corollary}\label{CorN32}   Suppose that $R$ is a 2 dimensional excellent regular local ring with quotient field $K$. Suppose that $\nu$ is a valuation of $K$ such that 
 $\nu$ dominates $R$.        Further suppose that $\nu$ has rational rank 1 and $\nu$ is not discrete. Suppose that $R\rightarrow R'$ is a nontrivial sequence of quadratic transforms along $\nu$. Then
 $\mbox{gr}_{\nu}(R')$ is not a finitely generated $\mbox{gr}_{\nu}(R)$-algebra.
 \end{Corollary}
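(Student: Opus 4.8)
The plan is to apply Theorem~\ref{ThmN2} in the degenerate case where the field extension is trivial. I would set $K^*:=K$, $\nu^*:=\nu$ and $S:=R'$. The identity map $K\to K$ is a finite separable extension of fields (of degree one), so this is a legitimate instance of the setup of Theorem~\ref{ThmN2}, provided $R'$ satisfies the hypotheses imposed on $S$ there.

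First I would record that $R'$ meets these hypotheses. Since $R\to R'$ is a sequence of quadratic transforms along $\nu$ and $R$ is a $2$-dimensional regular local ring, each quadratic transform (blow up $m_R$, localize at the center of $\nu$) is again a $2$-dimensional regular local ring with quotient field $K$, is excellent (being essentially of finite type over the excellent ring $R$), dominates the previous ring, and is dominated by $\nu$; hence $R'$ is a $2$-dimensional excellent regular local ring with quotient field $K$, it dominates $R$, and $\nu$ dominates $R'$. By hypothesis $\nu^*=\nu$ has rational rank $1$ and is not discrete. Now suppose, for contradiction, that ${\rm gr}_{\nu}(R')$ is a finitely generated ${\rm gr}_{\nu}(R)$-algebra. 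Then all the hypotheses of Theorem~\ref{ThmN2} hold, and its conclusion gives that $R'$ is a localization of the integral closure of $R$ in $K^*=K$. But $R$ is regular, hence normal, so its integral closure in $K$ is $R$ itself; thus $R'=R_{\mathfrak p}$ for some prime $\mathfrak p$ of $R$. Since $R'$ dominates $R$ we have $m_{R'}\cap R=m_R$, and $m_{R'}\cap R=\mathfrak pR_{\mathfrak p}\cap R=\mathfrak p$, so $\mathfrak p=m_R$ and $R'=R$. This contradicts the assumption that $R\to R'$ is a \emph{nontrivial} sequence of quadratic transforms, since a nontrivial such sequence produces a regular local ring properly containing $R$. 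Therefore ${\rm gr}_{\nu}(R')$ is not a finitely generated ${\rm gr}_{\nu}(R)$-algebra.

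There is essentially no obstacle: the entire content is carried by Theorem~\ref{ThmN2}, and the only elementary point to check is that a nontrivial quadratic transform of a $2$-dimensional regular local ring along a dominating valuation is a proper birational local extension, which is standard. The one thing to keep an eye on is that the trivial extension $K^*=K$ is genuinely permitted in Theorem~\ref{ThmN2} (it is, since degree one is finite and separable), so that no hypothesis of that theorem is being stretched.
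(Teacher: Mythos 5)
Your proof is correct and follows the same route as the paper, which states Corollary \ref{CorN32} as an immediate consequence of Theorem \ref{ThmN2}; the intended specialization is exactly yours, namely taking $K^*=K$, $\nu^*=\nu$, $S=R'$ and using the conclusion that $S$ is a localization of the integral closure of $R$ in $K$ (which is $R$ itself, by normality) to force $R'=R$, contradicting nontriviality. The paper leaves this specialization unwritten; you have supplied the details, including the routine verification that $R'$ satisfies the hypotheses imposed on $S$.
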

 
 In \cite{Vaq}, Michel Vaqui\'e extends MacLane's theory of key polynomials \cite{M} to show that if $(K,\nu)\rightarrow (K^*,\nu^*)$ is a finite extension of valued fields with $\delta(\nu^*/\nu)=0$ and $\nu^*$ is the unique extension of $\nu$ to $K^*$, then $\nu^*$ can be constructed from $\nu$ by a finite sequence of augmented valuations. This suggests that a converse of Theorem \ref{ThmN2} may be true.
 
 We thank Bernard Teissier for discussions on the topics of this paper.

\section{Local degree and defect}\label{SecLocDeg}

We will use the following criterion to measure defect, which is Proposition 3.4 \cite{C12}. This result is implicit in \cite{CP} with the assumptions of Proposition \ref{Prop1}.

\begin{Proposition}\label{Prop15}
 Suppose that $R$ is a 2 dimensional excellent local domain with quotient field $K$. Further suppose that $K^*$ is a finite separable extension of $K$ and $S$ is a 2 dimensional local domain with quotient field
$K^*$ such that  $S$ dominates $R$. 
Suppose that $\nu^*$ is a valuation of $K^*$ such that 
 $\nu^*$ dominates $S$, the residue field $V_{\nu^*}/m_{\nu^*}$ of $V_{\nu^*}$ is algebraic over $S/m_S$ and 
the value group $\Phi_{\nu^*}$ of $\nu^*$ has rational rank 1.
Let $\nu$ be the restriction of $\nu^*$ to $K$. There
 exists a local ring $R'$  of $K$ which is essentially of finite type over $R$, is dominated by 
$\nu$ and dominates $R$ such that if we have a commutative diagram
\begin{equation}\label{eq31}
\begin{array}{lll}
V_{\nu}&\rightarrow&V_{\nu^*}\\
\uparrow&&\uparrow\\
R_1&\rightarrow&S_1\\
\uparrow&&\\
R'&&\uparrow\\
\uparrow&&\\
R&\rightarrow&S
\end{array}
\end{equation}
where 
$R_1$ is a regular local ring of $K$ which is essentially of finite type over $R$ and dominates $R$, $S_1$ is a regular local ring of $K^*$ which is essentially of finite type over $S$ and dominates $S$, 
$R_1$ has a regular system of parameters $u,v$ and $S_1$ has a regular system of parameters $x,y$ such that there is an expression
$$
u=\gamma x^a, v=x^bf
$$
where $a>0$, $b\ge 0$, $\gamma$ is a unit in $S$, $x\not\,\mid f$ in $S_1$ and $f$ is not a unit in $S_1$, then
\begin{equation}\label{eq37}
ad[S_1/m_{S_1}:R_1/m_{R_1}]=e(\nu^*/\nu)f(\nu^*/\nu)p^{\delta(\nu^*/\nu)}
\end{equation}
where $d=\overline\nu(f\mbox{ mod }x)$ with $\overline \nu$ being the natural valuation of the DVR $S/xS$.
\end{Proposition}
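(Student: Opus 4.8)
The plan is to rewrite (\ref{eq37}) as an equality between two local degrees. By Ostrowski's lemma (\ref{int3}), the right hand side $e(\nu^*/\nu)f(\nu^*/\nu)p^{\delta(\nu^*/\nu)}$ equals $[K^*:K]$ as soon as $\nu^*$ is the unique extension of $\nu$ to $K^*$; and the left hand side $a\,d\,[S_1/m_{S_1}:R_1/m_{R_1}]$ will turn out to be the local degree of the $u$-adic valuation of $K$ at one of its extensions to $K^*$. So I would first arrange, by choosing $R'$ suitably, that once $R_1$ dominates $R'$ each of these two valuations has a unique extension, and then compute the two local degrees and see that they agree.

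For the local computation, observe first that because $u,v$ is a regular system of parameters of $R_1$ we have $v\notin uR_1$, while $uR_1 = xS_1\cap R_1$; hence $v\bmod x\ne 0$ in the discrete valuation ring $S_1/xS_1$, which forces $b=0$ (and $d=\overline\nu(f\bmod x)\ge 1$). Now $xS_1$ is a height one prime of $S_1$ lying over $uR_1$, so inside the extension $K\subseteq K^*$ we have an extension of discrete valuation rings $(R_1)_{uR_1}\hookrightarrow(S_1)_{xS_1}$; from $u=\gamma x^a$ with $\gamma$ a unit its ramification index is $a$, and the induced injection $R_1/uR_1\hookrightarrow S_1/xS_1$ sends the uniformizer $v\bmod u$ to $f\bmod x$, of order $d$, so the residue field extension of the two DVRs has degree $d\,[S_1/m_{S_1}:R_1/m_{R_1}]$. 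Hence $ef=a\,d\,[S_1/m_{S_1}:R_1/m_{R_1}]$ for this extension of DVRs. Since the $u$-adic valuation of $K$ is discrete of rank one and $K^*/K$ is separable, it has no defect, so the fundamental identity (Chapter VI of \cite{ZS2}) gives $[K^*:K]=\sum_i e_if_i$ summed over the extensions of the $u$-adic valuation of $K$ to $K^*$, and $(S_1)_{xS_1}$ contributes exactly the term $a\,d\,[S_1/m_{S_1}:R_1/m_{R_1}]$.

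It is therefore enough to choose $R'$ so that for every diagram (\ref{eq31}) over $R'$ carrying such an expression one has: (i) $\nu^*$ is the only extension of $\nu$ to $K^*$ dominating $S_1$; and (ii) $(S_1)_{xS_1}$ is the only extension of the $u$-adic valuation of $K$ to $K^*$. Then (i) makes the right hand side of (\ref{eq37}) equal to $[K^*:K]$ via (\ref{int3}), while (ii) makes $[K^*:K]$ equal to $a\,d\,[S_1/m_{S_1}:R_1/m_{R_1}]$ via the previous paragraph, which is (\ref{eq37}). To produce $R'$ one uses that $\nu$ has only finitely many extensions to $K^*$, whose valuation rings are pairwise incomparable, so finitely many blow ups of $R$ along $\nu$ separate their centres; $R'$ is taken to be the resulting local ring of $K$. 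The hypotheses that $\nu^*$ has rational rank one (hence rank one) and $V_{\nu^*}/m_{\nu^*}$ is algebraic over $S/m_S$ are what make this separation terminate and prevent new branches from appearing in the limit; this construction overlaps with, and is a little sharper than, Lemma \ref{LemmaN66}.

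The crux is condition (ii). The $u$ occurring in $u=\gamma x^a$ is, up to a unit, a power of an exceptional parameter of $R_1$, so it defines a divisorial valuation $\mathrm{ord}_u$ of $K$; as $R_1$ ranges over the blow ups of $R$ along $\nu$ that dominate $R'$, infinitely many such divisorial valuations occur, and one must show that the single ring $R'$ forces each of them to have a unique extension to $K^*$ — equivalently, that the integral closure of $R_1$ in $K^*$, which is finite over $R_1$ because $R_1$ is excellent, has a unique prime lying over each such $uR_1$. Controlling infinitely many divisorial valuations of $K$ simultaneously with one $R'$ is the point at which the structure theory of rational rank one valuations on two dimensional regular local rings is needed; this is the technical heart, carried out for algebraic function fields over an algebraically closed field in \cite{CP} and in the present generality in \cite{C12}. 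Given $R'$, the remainder is the bookkeeping of ramification indices and residue degrees sketched above.
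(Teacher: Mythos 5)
First, note that the paper itself does not prove Proposition \ref{Prop15}: it quotes it verbatim as Proposition 3.4 of \cite{C12}, so there is no internal proof to compare with and your argument must stand on its own. As written it has two genuine gaps, both in the part you describe as routine bookkeeping rather than in the construction of $R'$ that you defer to \cite{CP} and \cite{C12}.

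The first gap is your claim that $b=0$ is forced. You deduce $xS_1\cap R_1=uR_1$ from the fact that $xS_1$ has height one, but the contraction of a height one prime under $R_1\rightarrow S_1$ can perfectly well be $m_{R_1}$, since $S_1$ is neither finite nor flat over $R_1$; equivalently, $m_{R_1}S_1$ need not be $m_{S_1}$-primary, and it is not whenever $b\ge 1$. This is not a marginal case: in the application of the proposition in Section 3 of this very paper one arrives at $x_{\sigma_r}=\phi u_{\tau_s}^{\lambda}$ and $y_{\sigma_r}=\overline\epsilon u_{\tau_s}^{\alpha}(v_{\tau_s}+u_{\tau_s}\Omega)$ with $\alpha$ possibly positive, i.e.\ $b>0$ and $d=1$. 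When $b>0$ the restriction of ${\rm ord}_x$ to $K$ is centered at $m_{R_1}$ rather than at $uR_1$, the map $R_1/uR_1\rightarrow S_1/xS_1$ is not injective, and your identification of the left side of (\ref{eq37}) with $e\cdot f$ of an extension of divisorial valuations ${\rm ord}_u\rightarrow {\rm ord}_x$ collapses. Even when $b=0$, the equality $[{\rm QF}(S_1/xS_1):{\rm QF}(R_1/uR_1)]=d\,[S_1/m_{S_1}:R_1/m_{R_1}]$ is not merely ``ramification index times residue degree'': it is the fundamental equality for the secondary extension of DVRs $R_1/uR_1\rightarrow S_1/xS_1$ and requires its own uniqueness and no-defect statement (an analytic irreducibility assertion of the kind proved in Section 4 of the paper), which you do not address.

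The second gap is the identification of the right side of (\ref{eq37}) with $[K^*:K]$. Ostrowski's lemma (\ref{int3}) requires $\nu^*$ to be the unique extension of $\nu$ to the field $K^*$; this is an invariant of the valued field extension and cannot be created by any choice of $R'$ or by blowing up. Your condition (i), uniqueness among extensions dominating $S_1$, is strictly weaker and does not suffice: if $\nu$ has $g>1$ extensions to $K^*$, then $e(\nu^*/\nu)f(\nu^*/\nu)p^{\delta(\nu^*/\nu)}$ is the local degree $[K^*K^h:K^h]$ over the henselization of $(K,\nu)$, which is strictly smaller than $[K^*:K]$; for the same reason your condition (ii), that ${\rm ord}_u$ have a unique extension to all of $K^*$, is in general unattainable, since deep exceptional divisorial valuations inherit the splitting of $\nu$. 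The whole matching therefore has to be carried out henselian-locally (comparing both sides with $[{\rm QF}(S_1^h):{\rm QF}(R_1^h)]$, or an analogous local quantity), not against the global degree $[K^*:K]$.
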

\vskip .2truein
We now prove Lemma \ref{LemmaN66} from the introduction. Let $\nu_1=\nu^*,\nu_2,\ldots,\nu_r$ be the extensions of $\nu$ to $K^*$. Let $T$ be the integral closure of $V_{\nu}$ in $K^*$. Then $T=V_{\nu_1}\cap\cdots\cap V_{\nu_r}$ is the integral closure of $V_{\nu^*}$ in $K^*$ (by Propositions 2.36 and 2.38 \cite{RTM}). Let $m_i=m_{\nu_i}\cap T$ be the maximal ideals of $T$. By the Chinese remainder theorem, there exists $u\in T$ such that $u\in m_1$ and $u\not\in m_i$ for $2\le i\le r$. Let 
$$
u^n+a_1u^{n-1}+\cdots+a_n=0
$$
be an equation of integral dependence of $u$ over $V_{\nu}$.  Let $A$ be the integral closure of $R[a_1,\ldots,a_n]$ in $K$ and let $R'=A_{A\cap m_{\nu}}$. Let $T'$ be the integral closure of $R'$ in $K^*$. We have that $u\in T'\cap m_i$ if and only if $i=1$. Let $S'=T'_{T'\cap m_1}$. Then $\nu$ does not split in $S'$ and $R'$ has the property of the conclusions of the lemma.

\section{Generating Sequences}\label{SecGen}

Given an additive  group $G$ with $\lambda_0,\ldots, \lambda_r\in G$, $G(\lambda_0,\ldots,\lambda_r)$ will denote the subgroup generated by $\lambda_0,\ldots,\lambda_r$. The semigroup generated by $\lambda_0,\ldots, \lambda_r$ will be denoted by $S(\lambda_0,\ldots,\lambda_r)$. 

In this section, we will suppose that $R$ is a regular local ring of dimension two, with maximal ideal $m_R$ and residue field $R/ m_R$.
For $f\in R$, let $\overline f$ or $[f]$ denote the residue of $f$ in $R/ m_R$. 

The following theorem is Theorem 4.2 of \cite{CV1}, as interpreted by Remark 4.3 \cite{CV1}.

\begin{Theorem}\label{Theorem1*} Suppose that  $\nu$ is a  valuation 
of the quotient field of $R$ dominating $R$. Let $L=V_{\nu}/m_{\nu}$ be the residue field of the valuation ring $V_{\nu}$ of $\nu$. For $f \in V_{\nu}$, let $[f]$ denote the class of $f$ in $L$. Suppose that $x,y$ are regular parameters in $R$.
Then  there exist $\Omega\in\ZZ_+\cup\{\infty\}$ and 
$P_i(\nu,R)\in m_R$ for $i\in\ZZ_+$ with $i<\min\{\Omega+1,\infty\}$
 such that $P_0(\nu,R)=x$, $P_1(\nu,R)=y$ and for $1\le i<\Omega$, there is an expression
 \begin{equation}\label{eq11*} 
 P_{i+1}(\nu,R) = P_i(\nu,R)^{n_i(\nu,R)}+\sum_{k=1}^{\lambda_i} c_kP_0(\nu,R)^{\sigma_{i,0}(k)}P_1(\nu,R)^{\sigma_{i,1}(k)}\cdots P_{i}(\nu,R)^{\sigma_{i,i}(k)}
 \end{equation}
 with $n_i(\nu,R)\ge 1$, $\lambda_i\ge 1$, 
 \begin{equation}\label{eq12*}
 0\ne c_k\mbox{ units in }R
 \end{equation}
  for $1\le k\le \lambda_i$,
 $\sigma_{i,s}(k)\in\NN$ for all $s,k$,  $0\le \sigma_{i,s}(k)<n_s(\nu,R)$ for $s\ge 1$.
 Further,
 $$
 n_i(\nu,R)\nu(P_i(\nu,R))=\nu(P_0(\nu,R)^{\sigma_{i,0}(k)}P_1(\nu,R)^{\sigma_{i,1}(k)}\cdots P_{i}(\nu,R)^{\sigma_{i,i}(k)})
 $$
 for all $k$.
 
 For all $i\in\ZZ_+$ with $i<\Omega$, the following are true:
 \begin{enumerate}
 \item[1)] $\nu(P_{i+1}(\nu,R))>n_i(\nu,R)\nu(P_i(\nu,R))$.
 \item[2)] Suppose that $r\in\NN$, $m\in \ZZ_+$, $j_k(l)\in\NN$  for $1\le l\le m$ and $0\le j_k(l)<n_k(\nu,R)$ for $1\le k\le r$ are such that  $(j_0(l),j_1(l),\ldots,j_r(l))$ are distinct for $1\le l\le m$, and 
 $$
 \nu(P_0(\nu,R)^{j_0(l)}P_1(\nu,R)^{j_1(l)}\cdots P_r(\nu,R)^{j_r(l)})=\nu(P_0(\nu,R)^{j_0(1)}\cdots P_r(\nu,R)^{j_r(1)})
 $$
 for $1\le l\le m$.
 Then
 $$
 1,\left[\frac{P_0(\nu,R)^{j_0(2)}P_1(\nu,R)^{j_1(2)}\cdots P_r(\nu,R)^{j_r(2)}}{P_0(\nu,R)^{j_0(1)}P_1(\nu,R)^{j_1(1)}\cdots P_r(\nu,R)^{j_r(1)}}\right],
 \ldots,
 \left[\frac{P_0(\nu,R)^{j_0(m)}P_1(\nu,R)^{j_1(m)}\cdots P_r(\nu,R)^{j_r(m)}}{P_0(\nu,R)^{j_0(1)}P_1(\nu,R)^{j_1(1)}\cdots P_r(\nu,R)^{j_r(1)}}\right]
 $$
 are linearly independent over $R/ m_R$.
 \item[3)] Let 
 $$
 \overline n_i(\nu,R)=[G(\nu(P_0(\nu,R)),\ldots, \nu(P_(\nu,R)i)):G(\nu(P_0(\nu,R)),\ldots, \nu(P_{i-1}(\nu,R)))].
 $$
  Then $\overline n_i(\nu,R)$ divides $\sigma_{i,i}(k)$ for all $k$ in (\ref{eq11*}). In particular, $n_i(\nu,R)=\overline n_i(\nu,R)d_i(\nu,R)$ with $d_i(\nu,R)\in \ZZ_+$ 
 \item[4)] There exists $U_i(\nu,R)=P_0(\nu,R)^{w_0(i)}P_1(\nu,R)^{w_1(i)}\cdots P_{i-1}(\nu,R)^{w_{i-1}(i)}$ for $i\ge 1$ with $w_0(i),\ldots, w_{i-1}(i)\in\NN$ 
 and $0\le w_j(i)<n_j(\nu,R)$ for $1\le j\le i-1$ such that
 $\nu(P_i(\nu,R)^{\overline n_i})=\nu(U_i(\nu,R))$ and setting
 $$
 \alpha_i(\nu,R)=\left[\frac{P_i(\nu,R)^{\overline n_i(\nu,R)}}{U_i(\nu,R)}\right]
 $$
 then 
 $$
 \begin{array}{lll}
 b_{i,t}&=&\left[\sum_{\sigma_{i,i}(k)=t\overline n_i(\nu,R)}c_k\frac{P_0(\nu,R)^{\sigma_{i,0}(k)}P_1(\nu,R)^{\sigma_{i,1}(k)}\cdots P_{i-1}(\nu,R)^{\sigma_{i,i-1}(k)}} {U_i(\nu,R)^{(d_i(\nu,R)-t)}}\right]\\
 &&\in R/m_R(\alpha_1(\nu,R),\ldots,\alpha_{i-1}(\nu,R))
 \end{array}
 $$
 for $0\le t\le d_i(\nu,R)-1$ and
 $$
 f_i(u)=u^{d_i(\nu,R)}+b_{i,d_i(\nu,R)-1}u^{d_i(\nu,R)-1}+\cdots+b_{i,0}
 $$
 is the minimal polynomial of $\alpha_i(\nu,R)$ over $R/m_R(\alpha_1(\nu,R),\ldots,\alpha_{i-1}(\nu,R))$.
 
\end{enumerate}

The algorithm terminates with $\Omega<\infty$ if and only if either
\begin{equation}\label{eqL15}
\overline n_{\Omega}(\nu,R)=[G(\nu(P_0(\nu,R)),\ldots, \nu(P_{\Omega}(\nu,R))):G(\nu(P_0(\nu,R)),\ldots, \nu(P_{\Omega-1}(\nu,R)))]=\infty
\end{equation}
or 
\begin{equation}\label{eqL10}
\begin{array}{l}
\mbox{$\overline n_{\Omega}(\nu,R)<\infty$ (so that $\alpha_{\Omega}(\nu,R)$ is defined as in 4)) and}\\
\mbox{$d_{\Omega}(\nu,R)=[R/m_R(\alpha_1(\nu,R),\ldots,\alpha_{\Omega}(\nu,R)):R/m_R(\alpha_1(\nu,R),\ldots,\alpha_{\Omega-1}(\nu,R))]=\infty$.}
\end{array}
\end{equation}
If $\overline n_{\Omega}(\nu,R)=\infty$, set $\alpha_{\Omega}(\nu,R)=1$.

 \end{Theorem}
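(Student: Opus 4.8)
The plan is to construct $\Omega$ and the sequence $P_i:=P_i(\nu,R)$ by a recursion — a two-variable analogue of the key-polynomial constructions of MacLane and Spivakovsky — proving properties 1)--4) by simultaneous induction on $i$ and reading the termination dichotomy off the recursion itself. Abbreviate $n_i=n_i(\nu,R)$, $\overline n_i=\overline n_i(\nu,R)$, $\alpha_i=\alpha_i(\nu,R)$, $d_i=d_i(\nu,R)$, $U_i=U_i(\nu,R)$, and write $G_i=G(\nu(P_0),\dots,\nu(P_i))$, $F_i=R/m_R(\alpha_1,\dots,\alpha_i)$, $F_0=R/m_R$. Start with $P_0=x$, $P_1=y$. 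Suppose $P_0,\dots,P_i$ have been produced, 1)--4) hold for all indices below $i$, and the following key inductive assumption holds: every monomial $M=P_0^{a_0}\cdots P_i^{a_i}$ with $0\le a_s<n_s$ for $1\le s\le i$ has value $\sum_s a_s\nu(P_s)$; for any finitely many pairwise distinct such monomials $M_1,\dots,M_m$ of a common value the classes $1,[M_2/M_1],\dots,[M_m/M_1]$ are linearly independent over $R/m_R$ (this is precisely 2)); and $F_i$ is spanned over $R/m_R$ by classes $[M'/M'']$ of equal-value such monomials. To pass to $P_{i+1}$, form $\overline n_i=[G_i:G_{i-1}]$. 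If $\overline n_i=\infty$, set $\Omega=i$, put $\alpha_\Omega=1$, and stop: this is case (\ref{eqL15}). Otherwise $\overline n_i\nu(P_i)\in G_{i-1}$, and using the relations $n_s\nu(P_s)=\nu(\text{monomial in }P_0,\dots,P_{s-1})$ from earlier stages to lower exponents below $n_s$, I produce the monomial $U_i=P_0^{w_0(i)}\cdots P_{i-1}^{w_{i-1}(i)}$ of 4) with $\nu(P_i^{\overline n_i})=\nu(U_i)$, so that $\alpha_i=[P_i^{\overline n_i}/U_i]\in L$ is defined. If $\alpha_i$ is transcendental over $F_{i-1}$, set $d_i=\infty$, $\Omega=i$, and stop: this is case (\ref{eqL10}).

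If $\alpha_i$ is algebraic over $F_{i-1}$ of degree $d_i<\infty$, let $f_i(u)=u^{d_i}+b_{i,d_i-1}u^{d_i-1}+\cdots+b_{i,0}$ be its minimal polynomial over $F_{i-1}$. By the spanning part of the key inductive assumption, each coefficient $b_{i,t}$ is represented, as in the formula displayed in 4), by an explicit $R/m_R$-combination of monomials in $P_0,\dots,P_{i-1}$ of value $(d_i-t)\nu(U_i)$ normalized by $U_i^{d_i-t}$, with units $c_k\in R$. Setting $n_i=\overline n_id_i$, I define
\begin{equation*}
P_{i+1}=P_i^{\,n_i}+\sum_{t=0}^{d_i-1}\Big(\sum_{\sigma_{i,i}(k)=t\overline n_i}c_k\,P_0^{\sigma_{i,0}(k)}\cdots P_{i-1}^{\sigma_{i,i-1}(k)}\Big)P_i^{\,t\overline n_i},
\end{equation*}
which has the form (\ref{eq11*}). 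Each term other than $P_i^{n_i}$ has value $n_i\nu(P_i)\in G_i$, so its $P_i$-exponent $\sigma_{i,i}(k)$ satisfies $(\overline n_id_i-\sigma_{i,i}(k))\nu(P_i)\in G_{i-1}$; since $\overline n_i$ is the order of $\nu(P_i)+G_{i-1}$ in $G_i/G_{i-1}$, this forces $\overline n_i\mid\sigma_{i,i}(k)$, which is 3), and then $d_i\in\ZZ_+$. Dividing the displayed identity by $U_i^{d_i}$ and passing to residues in $L$ gives $[P_{i+1}/U_i^{d_i}]=\alpha_i^{d_i}+\sum_t b_{i,t}\alpha_i^{t}=f_i(\alpha_i)=0$, whence $\nu(P_{i+1})>\nu(U_i^{d_i})=n_i\nu(P_i)$, which is 1); the remaining parts of 4) are built into the construction.

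The crux is to carry the key inductive assumption from stage $i$ to stage $i+1$: monomials $P_0^{a_0}\cdots P_{i+1}^{a_{i+1}}$ with $0\le a_s<n_s$ for $1\le s\le i+1$ must again have the stated value, the linear-independence property, and the spanning property for $F_{i+1}$. The tool I would use is a ``standard expansion'' of elements of $R$ (a refinement of the $(x,y)$-adic expansion, exploiting $m_R=(x,y)$): applying the relations $f_1,\dots,f_i$ and (\ref{eq11*}) to repeatedly trade a factor $P_s^{n_s}$ for $P_{s+1}$ minus lower terms, one writes each $f\in R$ as an $m_R$-adically convergent $R/m_R$-combination $\sum_M c_MM$ of such monomials, the process terminating modulo each power of $m_R$ because $\nu(P_{s+1})>n_s\nu(P_s)$ makes every substitution strictly raise value. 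For such an expansion $\nu(f)=\min_M\nu(c_MM)$, precisely because equal-value monomials have linearly independent normalized residues so that no leading term cancels; conversely this min-property, together with the minimality of $f_i$ (which makes $1,\alpha_i,\dots,\alpha_i^{d_i-1}$ an $F_{i-1}$-basis of $F_i$), yields the linear-independence and spanning statements at stage $i+1$. I expect the design of this inductive loading — arranging that the ``no cancellation of leading terms'' phenomenon reproduces itself when the monomial set is enlarged by $P_{i+1}$ — to be the main obstacle; the rest (well-definedness of $U_i$ and $\alpha_i$, the two stopping conditions, and properties 1) and 3)) is bookkeeping around it. Finally, the recursion runs forever exactly when every $\overline n_i$ is finite and every $\alpha_i$ is algebraic over $F_{i-1}$, so if it halts it halts for precisely one of the reasons (\ref{eqL15}) or (\ref{eqL10}), which is the stated dichotomy.
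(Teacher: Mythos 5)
A preliminary remark: the paper itself contains no proof of this statement; it is quoted verbatim as Theorem 4.2 of \cite{CV1} (as interpreted by Remark 4.3 there), so your attempt can only be measured against the argument in that source. Your recursion is the construction used there (a two-dimensional avatar of the MacLane--Spivakovsky key polynomial algorithm): compute $\overline n_i(\nu,R)=[G_i:G_{i-1}]$ and stop if it is infinite; otherwise normalize $\overline n_i(\nu,R)\nu(P_i(\nu,R))$ to a monomial $U_i(\nu,R)$, form $\alpha_i(\nu,R)$, and stop if it is transcendental; otherwise lift the minimal polynomial $f_i$ so that $[P_{i+1}(\nu,R)/U_i(\nu,R)^{d_i(\nu,R)}]=f_i(\alpha_i(\nu,R))=0$ forces the value to jump. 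Your derivations of 1), of 3), of the termination dichotomy, and of the fact that 4) is built into the construction are all correct, granting the inductive hypotheses.

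The genuine gap is the one you flag yourself: the propagation of property 2) from stage $i$ to stage $i+1$ is asserted as an expectation (``I expect the design of this inductive loading \dots to be the main obstacle'') rather than proved, and this is essentially the entire technical content of the theorem --- it is what guarantees that values of admissible monomials are additive, that $\nu(f)=\min_M\nu(c_MM)$ for a standard expansion, and that each $b_{i,t}$ is representable with the specific denominator $U_i(\nu,R)^{d_i(\nu,R)-t}$, all of which you use. The missing argument runs as follows: the ratio of two distinct admissible equal-value monomials in $P_0(\nu,R),\dots,P_r(\nu,R)$ is a Laurent monomial of value $0$, so by the analogue of (\ref{eqZ51}) its residue is $\prod_j\alpha_j(\nu,R)^{s_j}$, where $s_r=(j_r(l)-j_r(1))/\overline n_r(\nu,R)$ satisfies $|s_r|<d_r(\nu,R)$ because $|j_r(l)-j_r(1)|<n_r(\nu,R)=\overline n_r(\nu,R)d_r(\nu,R)$; grouping the monomials by $s_r$ and using that $1,\alpha_r(\nu,R),\dots,\alpha_r(\nu,R)^{d_r(\nu,R)-1}$ is a basis of $F_r$ over $F_{r-1}$ (minimality of $f_r$), one reduces linear independence at level $r$ to level $r-1$ and closes the induction; none of this appears in your write-up. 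A second, smaller omission: group membership $\overline n_i(\nu,R)\nu(P_i(\nu,R))\in G_{i-1}$ only yields integer exponents, and after reducing the $P_j(\nu,R)$-exponents modulo $\overline n_j(\nu,R)$ for $j\ge 1$ you must still verify that the resulting $P_0(\nu,R)$-exponent $w_0(i)$ is nonnegative; this requires the telescoping inequality $\overline n_i(\nu,R)\nu(P_i(\nu,R))>\sum_{j=1}^{i-1}(n_j(\nu,R)-1)\nu(P_j(\nu,R))$, which follows from 1) by induction but is nowhere stated. Until the inductive step for 2) is actually carried out, the proposal is an accurate outline of the proof in \cite{CV1} rather than a proof.
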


 Let notation be as in Theorem \ref{Theorem1*}.
 
 The following formula is  formula $B(i)$ on page 10 of \cite{CV1}.

\vskip .2truein
\begin{equation}\label{eqZ51}
\begin{array}{l}
\mbox{ Suppose that $M$ is a Laurent monomial in $P_0(\nu,R),P_1(\nu,R),\ldots, P_i(\nu,R)$}\\
\mbox{ and $\nu(M)=0$. Then there exist $s_i\in\ZZ$ such that }\\
\,\,\,\,\,\,\,\,\,\, M=\prod_{j=1}^i\left[\frac{P_j(\nu,R)^{\overline n_j}}{U_j(\nu,R)}\right]^{s_j},\\
\mbox{ so that}\\
\,\,\,\,\,\,\,\,\,\, [M]\in R/m_R[\alpha_1(\nu,R),\ldots,\alpha_i(\nu,R)].
\end{array}
\end{equation}
\vskip .2truein

Define $\beta_i(\nu,R)=\nu(P_i(\nu,R))$ for $0\le i$.
 
 Since $\nu$ is a valuation of the quotient field of $R$, we have that 
 \begin{equation}\label{eq50}
 \Phi_{\nu}=\cup_{i=1}^{\infty}G(\beta_0(\nu,R),\beta_1,\ldots,\beta_i(\nu,R))
 \end{equation}
 and
 \begin{equation}\label{eq51}
 V_{\nu}/m_{\nu}=\cup_{i=1}^{\infty} R/m_R[\alpha_1(\nu,R),\ldots,\alpha_i(\nu,R)]
 \end{equation}

The following is Theorem 4.10 \cite{CV1}.

\begin{Theorem}\label{TheoremG2} 
Suppose that $\nu$ is a valuation dominating $R$.
Let 
$$
P_0(\nu,R)=x, P_1(\nu,R)=y, P_2(\nu,R),\ldots
$$
be the sequence of elements of $R$ constructed by  Theorem \ref{Theorem1*}.  Suppose that $f\in R$  and there exists $n\in\ZZ_+$ such that $\nu(f)<n\nu(m_R)$. 
Then there exists an expansion 
$$
f=\sum_{I}a_IP_0(\nu,R)^{i_0}P_1(\nu,R)^{i_1}\cdots P_r(\nu,R)^{i_r}+\sum_J\phi_JP_0(\nu,R)^{j_0}\cdots P_r(\nu,R)^{j_r}+h
$$
where $r\in\NN$, $a_{I}$ are units in $R$, $I,J\in \NN^{r+1}$, $\nu(P_0(\nu,R)^{i_0}P_1(\nu,R)^{i_1}\cdots P_r(\nu,R)^{i_r})=\nu(f)$ for all $I$ in the first sum,  $0\le i_k<n_k(\nu,R)$ for $1\le k\le r$, $\nu(P_0(\nu,R)^{j_0}\cdots P_r(\nu,R)^{j_r})>\nu(f)$ for all terms in the second sum, $\phi_J\in R$ and $h\in  m_R^n$.
The terms in the first sum are uniquely determined, up to the choice of units $a_i$, whose residues in $R/m_R$ are uniquely determined.
\end{Theorem}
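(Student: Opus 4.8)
The plan is to build the statement out of three ingredients: a spanning (``generating sequence'') property of ${\rm gr}_\nu(R)$, a finiteness coming from $R/m_R^n$ having finite length, and the uniqueness assertion 2) of Theorem \ref{Theorem1*}. Since $\nu(f)<n\nu(m_R)$, replacing $f$ by an element congruent to it modulo $m_R^n$ changes neither $\nu(f)$ nor the conclusion, so I may assume $f$ is a polynomial in the regular parameters $x=P_0(\nu,R)$ and $y=P_1(\nu,R)$, up to an error in $m_R^n$. I would then run a descending process on $\overline R=R/m_R^n$: starting from $\overline f$, repeatedly subtract a lift of the $\nu$-initial form of the current remainder, written (via the spanning property below) as an $R/m_R$-combination, with unit coefficients, of \emph{reduced} monomials $P_0(\nu,R)^{i_0}\cdots P_r(\nu,R)^{i_r}$ (with $0\le i_k<n_k(\nu,R)$ for $k\ge 1$) of that same value. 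Each step strictly raises the value of the remainder, and since $\overline R$ has finite length its associated graded for the $\nu$-filtration does too, so only finitely many values occur before the remainder vanishes in $\overline R$, i.e. lands in $m_R^n$. Collecting the monomials extracted at the first step (all of value $\nu(f)$) as the first sum, those from later steps (all of value $>\nu(f)$) as the second sum, and the terminal remainder as $h$, gives the asserted expansion; the first sum has value exactly $\nu(f)$ because its leading terms cannot cancel, by 2) of Theorem \ref{Theorem1*}.

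The core step is the spanning property, which I would prove from the defining relations (\ref{eq11*}), rewritten as
$$
P_k(\nu,R)^{n_k(\nu,R)}=P_{k+1}(\nu,R)-\sum_{j}c_j\,P_0(\nu,R)^{\sigma_{k,0}(j)}P_1(\nu,R)^{\sigma_{k,1}(j)}\cdots P_k(\nu,R)^{\sigma_{k,k}(j)},
$$
a genuine identity in $R$. Given any polynomial in the $P_i(\nu,R)$, whenever a monomial has $P_k(\nu,R)$-exponent $\ge n_k(\nu,R)$ for some $k\ge1$ (take $k$ maximal with this property), substitute this relation for the factor $P_k(\nu,R)^{n_k(\nu,R)}$. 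By the value equalities in Theorem \ref{Theorem1*} the terms of the sum retain the $\nu$-value of the original monomial but have strictly smaller $P_k(\nu,R)$-exponent (since $\sigma_{k,k}(j)<n_k(\nu,R)$), while the $P_{k+1}(\nu,R)$-term has strictly larger $\nu$-value by 1) of Theorem \ref{Theorem1*} and hence leaves its value class. Grouping monomials by $\nu$-value, within each value class the reduction strictly decreases a well-founded measure (a lexicographic functional of the exponents, read from the highest index down, on the bounded set of exponent vectors that can occur) while mass is pushed only into strictly higher value classes; since only finitely many reduced monomials have value below any given bound — in our setting the $\beta_i(\nu,R)$ tend to infinity by (\ref{eq50}), as $\nu$ is not discrete of rational rank $1$ — the process terminates, expressing the initial form of any element of $R$ as an $R/m_R$-combination of initial forms of reduced monomials of the same value. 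Making this termination precise (the well-founded order, and the finiteness of bounded-value reduced monomials in full generality) is the step I expect to be the main obstacle.

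Finally, for uniqueness, suppose $f$ has two expansions as in the statement. Subtracting them and combining like monomials yields an identity $\sum_I(a_I-a_I')P_0(\nu,R)^{i_0}\cdots P_r(\nu,R)^{i_r}=g$ in which the monomials are reduced, of value $\nu(f)$, with pairwise distinct exponent vectors, and $\nu(g)>\nu(f)$ since the two second sums and the two terms in $m_R^n$ all have value $>\nu(f)$. If some coefficient $a_I-a_I'$ were a unit, then the $\nu$-initial form of the left-hand side would equal $\sum_I[a_I-a_I']\,{\rm in}_\nu(P_0(\nu,R)^{i_0}\cdots P_r(\nu,R)^{i_r})$, and dividing by the initial form of one of the monomials and reading residues in $V_\nu/m_\nu$ would give a nontrivial $R/m_R$-linear relation among $1$ and the classes of the ratios of the remaining monomials, contradicting 2) of Theorem \ref{Theorem1*}; so the left-hand side would have value $\nu(f)$, contradicting $\nu(g)>\nu(f)$. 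Hence every $a_I-a_I'$ lies in $m_R$, which forces the two first sums to involve exactly the same monomials (a monomial occurring in only one would have unit coefficient difference) and the residues of their unit coefficients to coincide, as claimed.
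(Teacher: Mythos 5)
You should first be aware that the paper contains no proof of this statement: Theorem \ref{TheoremG2} is quoted verbatim as Theorem 4.10 of \cite{CV1}, so there is nothing internal to compare your argument against. Judged on its own terms, your architecture (pass to $R/m_R^n$, rewrite non-reduced monomials using the relations (\ref{eq11*}), extract the value-$\nu(f)$ part, and deduce uniqueness from 2) of Theorem \ref{Theorem1*}) is the right one, and your uniqueness paragraph is essentially complete. But the obstacle you flag at the end of your second paragraph is a genuine gap, and it is located slightly differently from where you place it. Within a \emph{fixed} value class the lex-from-the-top order does work: the correction monomials in (\ref{eq11*}) involve only the indices $0,\dots,k$, leave the exponents of $P_{k+1}(\nu,R),\dots$ unchanged while strictly dropping the $P_k(\nu,R)$-exponent, and the $P_{k+1}(\nu,R)$-term exits the class by 1); the multiset extension of this well-order then terminates the reduction in that class. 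What is \emph{not} justified is that your iterated subtraction stops with a remainder in $m_R^n$. Finite length of $R/m_R^n$ shows only that the chain of images of the $\mathcal P_{\gamma}(R)$ in $R/m_R^n$ has finitely many strict drops; it does not show the chain stabilizes at zero, i.e.\ that $\mathcal P_{\gamma}(R)\subseteq m_R^n$ for $\gamma\gg 0$. That cofinality of the $\nu$-filtration with the $m_R$-adic filtration is a real theorem (it requires extending $\nu$ to $\hat R$ and invoking Chevalley's theorem, or an Izumi-type comparison), not a formal consequence of finite length. Moreover your appeal to ``$\nu$ not discrete of rational rank $1$'' imports a hypothesis that is not in force: Theorem \ref{TheoremG2} is stated for an arbitrary valuation dominating $R$ (rational rank $1$ is only assumed later in the section), and (\ref{eq50}) does not assert that the $\beta_i(\nu,R)$ tend to infinity. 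The finiteness of $S^R(\nu)\cap[0,B)$ in rank $1$, and well-orderedness in general, is a theorem of \cite{CT1} and must be cited or proved.

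Both difficulties can largely be sidestepped by noticing that the statement asks much less of the second sum than your process delivers: those monomials need not be reduced and the $\phi_J$ are arbitrary elements of $R$. So it suffices to fully reduce only the monomials of minimal value at each stage until a nonzero reduced part of value $\nu(f)$ appears (this stage is reached after finitely many strict increases of the minimal value inside $S(\beta_0(\nu,R),\beta_1(\nu,R),\dots)\cap[0,\nu(f)]$, which again uses the well-orderedness just mentioned); everything pushed to higher value can be dumped into the second sum immediately, so no control of the higher value classes, and no cofinality statement, is needed. Note that the minimal-value classes below $\nu(f)$ genuinely must be treated: after writing $f\equiv\sum c_{ij}x^iy^j$ modulo $m_R^n$, individual monomials can have value strictly less than $\nu(f)$ which cancels (e.g.\ $f=P_2(\nu,R)$ itself), so one cannot start the descent at $\nu(f)$. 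Finally, one point you omit entirely: after combining like reduced monomials of value $\nu(f)$, a coefficient is a \emph{sum} of units and need not be a unit; when it lies in $m_R$ you must re-expand it in $x,y$ modulo a suitable power of $m_R$ and move the resulting terms into the second sum and into $h$, or the ``$a_I$ are units'' clause of the first sum fails and the uniqueness statement becomes vacuous for those terms.
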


 Let $\sigma_0(\nu,R)=0$ and inductively define 
 \begin{equation}\label{eq3}
 \sigma_{i+1}(\nu,R)=\min\{j>\sigma_i(\nu,R)\mid n_j(\nu,R)>1\}.
 \end{equation}
 
 In Theorem \ref{TheoremG2}, we see that all of the monomials in the expansion of $f$ are in terms of the $P_{\sigma_i}$.
 
 We have that
 $$
 S(\beta_0(\nu,R),\beta_1(\nu,R),\ldots,\beta_{\sigma_j(\nu,R)})=S(\beta_{\sigma_0}(\nu,R),\beta_{\sigma_1(\nu,R)},\ldots,\beta_{\sigma_j(\nu,R)})
 $$
for all $j\ge 0$ and
$$
\begin{array}{l}
R/m_R[\alpha_1(\nu,R),\alpha_2(\nu,R),\ldots,\alpha_{\sigma_j(\nu,R)}(\nu,R)]\\
=R/m_R[\alpha_{\sigma_1(\nu,R)}(\nu,R),\alpha_{\sigma_2(\nu,R)}(\nu,R),\ldots,\alpha_{\sigma_j(\nu,R)}(\nu,R)]
\end{array}
$$
for all $j\ge 1$.

Suppose that $R$ is a regular local ring of dimension two which is dominated by a valuation $\nu$. The quadratic transform $T_1$ of $R$ along $\nu$ is defined as follows. Let $u,v$ be a system of regular parameters in $R$,
 Then $R[\frac{v}{u}]\subset V_{\nu}$ if $\nu(u)\le \nu(v)$ and $R[\frac{u}{v}]\subset V_{\nu}$ if $\nu(u)\ge \nu(v)$. Let 
$$
T_1=R\left[\frac{v}{u}\right]_{R[\frac{v}{u}]\cap  m_{\nu}}\mbox{ or }T_1=R\left[\frac{u}{v}\right]_{R[\frac{u}{v}]\cap  m_{\nu}},
$$
depending on if $\nu(u)\le\nu(v)$ or $\nu(u)>\nu(v)$.
$T_1$ is a two dimensional regular local ring which is dominated by $\nu$. 
Let
\begin{equation}\label{eqX3}
R\rightarrow T_1\rightarrow T_2\rightarrow \cdots
\end{equation}
be the infinite sequence of quadratic transforms along $\nu$, so that $V_{\nu}=\cup_{i\ge 1} T_i$ (Lemma 4.5 \cite{RTM}) and $L=V_{\nu}/m_{\nu}=\cup_{i\ge 1} T_i/m_{T_i}$.

For $f\in R$ and $R\rightarrow R^*$ a sequence of quadratic transforms along $\nu$, we define a strict transform of $f$ in $R^*$ to be $f_1$ if $f_1\in R^*$  is a local equation of the strict transform  in $R^*$ of the subscheme $f=0$ of $R$. In this way, a strict transform is only defined up to multiplication by a unit in $R^*$. This ambiguity will not be a difficulty in our proof. We will denote a strict transform of $f$ in $R^*$ by $\mbox{st}_{R^*}(f)$.

We use the notation of Theorem \ref{Theorem1*} and its proof for $R$ and the $\{P_i(\nu,R)\}$. Recall that $U_1=U^{w_0(1)}$. Let $w=w_0(1)$. Since $\overline n_1(\nu,R)$ and $w$ are relatively prime, there exist $a,b\in\NN$ such that 
$$
\epsilon:=\overline n_1(\nu,R)b-wa=\pm 1.
$$
 Define
elements of the quotient field of $R$ by
\begin{equation}\label{eqX50}
x_1=(x^by^{-a})^{\epsilon}, y_1=(x^{-w}y^{\overline n_1(\nu,R)})^{\epsilon}.
\end{equation}
 We have that
\begin{equation}\label{eqZ1}
x=x_1^{\overline n_1(\nu,R)}y_1^a, y=x_1^wy_1^b.
\end{equation}
Since $\overline n_1(\nu,R)\nu(y)=w\nu(x)$, it follows that
$$
\overline n_1(\nu,R)\nu(x_1)=\nu(x)>0\mbox{ and }\nu(y_1)=0.
$$
We further have that
\begin{equation}\label{eqZ3}
\alpha_1(\nu,R)=[y_1]^{\epsilon}\in V_{\nu}/m_{\nu}.
\end{equation}
Let $A=R[x_1,y_1]\subset V_{\nu}$ and $ m_A= m_{\nu}\cap A$. 

Let $R_1=A_{ m_A}$. We have that $R_1$ is a regular local ring and the divisor of $xy$ in $R_1$
 has only one component ($x_1=0$). In particular, $R\rightarrow R_1$ is ``free'' (Definition 7.5 \cite{CP}).
$R\rightarrow R_1$ factors (uniquely) as a product
of quadratic transforms and the divisor of $xy$ in $R_1$  has two distinct irreducible factors in all intermediate rings. 

The following is Theorem 7.1 \cite{CV1}.

\begin{Theorem}\label{birat}
Let $R$ be a two dimensional regular local ring with regular parameters $x,y$. Suppose that $R$ is dominated by a valuation $\nu$. Let $P_0(\nu,R)=x$, $P_1(\nu,R)=y$ and $\{P_i(\nu,R)\}$ be the sequence of elements of $R$ constructed in Theorem \ref{Theorem1*}. Suppose that $\Omega\ge 2$. Then there exists some smallest value $i$ in the sequence (\ref{eqX3}) such that
the divisor of $xy$ in $\mbox{Spec}(T_i)$ has only one component. Let $R_1=T_i$.
Then $R_1/m_{R_1}\cong R/m_R(\alpha_1(\nu,R))$, and there exists $x_1\in R_1$ and $w\in\ZZ_+$ such that
$x_1=0$ is a local equation of the exceptional divisor of $\mbox{Spec}(R_1)\rightarrow \mbox{Spec}(R)$, and $Q_0=x_1$, $Q_1=\frac{P_2}{x_1^{wn_1}}$ are regular parameters in $R_1$. We have that
$$
P_i(\nu,R_1)=\frac{P_{i+1}(\nu,R)}{P_0(\nu,R_1)^{w n_1(\nu,R)\cdots n_i(\nu,R)}}
$$
for $1\le i< \max\{\Omega,\infty\}$
satisfy the conclusions  of Theorem \ref{Theorem1*}  for the ring $R_1$.
\end{Theorem}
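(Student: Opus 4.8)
The plan is to work with the explicit ring $R_1=A_{m_A}$, $A=R[x_1,y_1]$, constructed just before the statement, and to transport the generating-sequence data of Theorem \ref{Theorem1*} from $R$ to $R_1$ along the monomial substitution $x=x_1^{\overline n_1(\nu,R)}y_1^a$, $y=x_1^wy_1^b$ of (\ref{eqZ1}) (with $w=w_0(1)$ and $\epsilon=\overline n_1(\nu,R)b-wa=\pm1$). By the discussion preceding the statement, $R\to R_1$ is a product of quadratic transforms along $\nu$, so $R_1$ occurs in the sequence (\ref{eqX3}); the divisor of $xy$ in $\mbox{Spec}(R_1)$ is the single curve $x_1=0$, whereas it has two components in every strictly earlier member of (\ref{eqX3}), so $R_1=T_i$ for the least such $i$. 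Since $\nu(y_1)=0$ and $\overline n_1(\nu,R)\nu(x_1)=\nu(x)>0$, the equalities $x=(\mbox{unit})\,x_1^{\overline n_1(\nu,R)}$ and $y=(\mbox{unit})\,x_1^w$ in $R_1$ give $m_RR_1=x_1^{\min\{\overline n_1(\nu,R),w\}}R_1$, so $x_1=0$ is a local equation of the exceptional divisor of $\mbox{Spec}(R_1)\to\mbox{Spec}(R)$. Finally, the hypothesis $\Omega\ge2$ means the algorithm of Theorem \ref{Theorem1*} did not stop at index $1$, so $\overline n_1(\nu,R)<\infty$ and $d_1(\nu,R)<\infty$.

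Next one shows that $Q_0=x_1$ and $Q_1:=P_2(\nu,R)/x_1^{wn_1(\nu,R)}$ form a regular system of parameters of $R_1$ and that $R_1/m_{R_1}=R/m_R(\alpha_1(\nu,R))$. Writing $P_2(\nu,R)$ by (\ref{eq11*}) for $i=1$ and substituting (\ref{eqZ1}), the fact that $n_1(\nu,R)\nu(y)$ is the common $\nu$-value of all monomials of $P_2(\nu,R)$ translates into: each monomial of $P_2(\nu,R)$ is divisible in $R[x_1,y_1]$ by exactly $x_1^{wn_1(\nu,R)}$. Hence $Q_1=y_1^{bn_1(\nu,R)}+\sum_kc_ky_1^{a\sigma_{1,0}(k)+b\sigma_{1,1}(k)}$ lies in $R[x_1,y_1]$, and $\nu(Q_1)>0$ by conclusion 1), so $x_1,Q_1\in m_{R_1}$. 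Since $m_R\subseteq x_1R[x_1,y_1]$, reduction modulo $x_1$ gives that $R[x_1,y_1]/x_1R[x_1,y_1]=(R/m_R)[\overline{y}_1]$ is a polynomial ring in one variable; using $\overline n_1(\nu,R)\sigma_{1,0}(k)+w\sigma_{1,1}(k)=wn_1(\nu,R)$ together with $\gcd(\overline n_1(\nu,R),w)=1$, which forces $\overline n_1(\nu,R)\mid\sigma_{1,1}(k)$, one rewrites the $y_1$-exponents of $Q_1$ and, by the computation of the $b_{1,t}$ in conclusion 4), recognizes the image of $Q_1$ as $\overline{y}_1^{\,awd_1(\nu,R)}f_1(\overline{y}_1)$ if $\epsilon=1$ and as $\overline{y}_1^{\,(aw-1)d_1(\nu,R)}$ times the reciprocal polynomial of $f_1$ if $\epsilon=-1$. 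As $f_1$ is irreducible of degree $d_1(\nu,R)$ over $R/m_R$ and $[y_1]=\alpha_1(\nu,R)^{\epsilon}$ is a nonzero root of $f_1$ (resp.\ of its reciprocal), the localization $R_1/x_1R_1$ is a discrete valuation ring whose maximal ideal is generated by the image of $Q_1$. This yields simultaneously that $x_1,Q_1$ is a regular system of parameters of $R_1$ and that $R_1/m_{R_1}=R/m_R([y_1])=R/m_R(\alpha_1(\nu,R))$.

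Finally one transports the generating sequence. Put $P_0(\nu,R_1)=x_1$, $P_1(\nu,R_1)=Q_1$, and $P_i(\nu,R_1)=P_{i+1}(\nu,R)/x_1^{wn_1(\nu,R)\cdots n_i(\nu,R)}$ for $i\ge1$. The key technical point is that for $i\ge1$ the order of vanishing of $P_{i+1}(\nu,R)$ along $x_1=0$ equals $wn_1(\nu,R)\cdots n_i(\nu,R)$, and is attained \emph{only} by the leading term $P_i(\nu,R)^{n_i(\nu,R)}$: writing $\beta_j=\nu(P_j(\nu,R))$, $\beta'_j=\nu(P_j(\nu,R_1))$, one has $\beta_j=wn_1(\nu,R)\cdots n_{j-1}(\nu,R)\,\nu(x_1)+\beta'_{j-1}$ and, from conclusion 1), $\beta'_j>n_j(\nu,R)\beta'_{j-1}$ for $j\ge2$; feeding this into the valuation identity $n_i(\nu,R)\beta_i=\sum_s\sigma_{i,s}(k)\beta_s$ of (\ref{eq11*}) shows, by a short induction using $0\le\sigma_{i,s}(k)<n_s(\nu,R)$, that each non-leading monomial of $P_{i+1}(\nu,R)$ has strictly larger $x_1$-order than $P_i(\nu,R)^{n_i(\nu,R)}$. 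Consequently $P_i(\nu,R_1)\in R[x_1,y_1]$; substituting (\ref{eqZ1}) and $P_s(\nu,R)=x_1^{wn_1(\nu,R)\cdots n_{s-1}(\nu,R)}P_{s-1}(\nu,R_1)$ ($s\ge2$) into (\ref{eq11*}) for $P_{i+1}(\nu,R)$ and dividing by $x_1^{wn_1(\nu,R)\cdots n_i(\nu,R)}$ produces exactly (\ref{eq11*}) for $P_i(\nu,R_1)$, now with $n_t(\nu,R_1)=n_{t+1}(\nu,R)$, $\overline n_t(\nu,R_1)=\overline n_{t+1}(\nu,R)$ (again by $\gcd(\overline n_1(\nu,R),w)=1$), and $\alpha_t(\nu,R_1)=\alpha_{t+1}(\nu,R)$ as elements of $V_\nu/m_\nu$ (cf.\ (\ref{eqZ51}), (\ref{eq51})). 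Conclusions 1)--4) and the termination alternative (\ref{eqL15})/(\ref{eqL10}) for $R_1$, with $\Omega$ replaced by $\Omega-1$, then follow from the corresponding statements for $R$ under this reindexing.

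I expect the main obstacle to be this last step: verifying that the substitution carries (\ref{eq11*}) for $R$ into (\ref{eq11*}) for $R_1$ \emph{exactly in the normal form of Theorem \ref{Theorem1*}} -- with reduced exponents $0\le\sigma_{t,s}<n_s$, the correct unit coefficients, and the residue-theoretic conclusions 2)--4) preserved -- since one must track the $\nu$-grading, the unit factors $y_1^{(\cdot)}$ produced by the substitution, and the normalizing monomials $U_j$ all at once. The clean organizing device is that the substitution is an isomorphism of $\nu$-graded structures carrying $R/m_R(\alpha_1(\nu,R),\ldots)$-linear relations to $R_1/m_{R_1}(\alpha_1(\nu,R_1),\ldots)$-linear relations, so that the linear independence in conclusion 2) and the minimal polynomials in conclusion 4) transfer. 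A secondary obstacle, already needed in the second step, is pinning down precisely which irreducible polynomial in $[y_1]$ cuts out $R_1/m_{R_1}$, which is exactly the point at which conclusion 4) of Theorem \ref{Theorem1*} enters.
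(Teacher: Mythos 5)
The paper does not actually prove Theorem \ref{birat}; it is quoted verbatim as Theorem 7.1 of \cite{CV1}, so there is no in-paper argument to compare yours against. That said, your proposal follows exactly the route that the paper's surrounding machinery sets up (the explicit chart $R_1=A_{m_A}$ with $A=R[x_1,y_1]$ and the substitution (\ref{eqZ1}), the exponent identity $\overline n_1(\nu,R)\sigma_{1,0}(k)+w\sigma_{1,1}(k)=wn_1(\nu,R)$, and the Laurent-monomial formula (\ref{eqZ51})), and the same $x_1$-order bookkeeping reappears in the paper's own Lemma \ref{Lemma1}. Your steps 1 and 2 are complete and correct: the identification of the image of $Q_1$ in $A/x_1A$ as $\overline y_1^{\,awd_1(\nu,R)}f_1(\overline y_1)$ (resp.\ the reciprocal polynomial when $\epsilon=-1$), using conclusion 4) to recognize the coefficients as the $b_{1,t}$, is precisely the right way to get both that $x_1,Q_1$ generate $m_{R_1}$ and that $R_1/m_{R_1}=R/m_R(\alpha_1(\nu,R))$. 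The telescoping estimate $\sum_{s=2}^{i-1}(n_s-1)\beta_{s-1}'<\beta_{i-1}'$ that you feed into the valuation identity correctly yields that every non-leading monomial of $P_{i+1}(\nu,R)$ has $x_1$-order strictly greater than $wn_1(\nu,R)\cdots n_i(\nu,R)$, so the normal form (\ref{eq11*}) and conclusion 1) do transfer as you say.

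The one place where your write-up is a plan rather than a proof is the final transfer of conclusions 2)--4): asserting that the substitution is ``an isomorphism of $\nu$-graded structures'' does not by itself handle the change of ground field from $R/m_R$ to $R_1/m_{R_1}=R/m_R(\alpha_1(\nu,R))$ in conclusion 2). A relation over $R_1/m_{R_1}$ among residues of ratios of $R_1$-monomials, when pulled back, acquires coefficients involving powers of $\alpha_1(\nu,R)$, and one must re-expand these as residues of $R$-monomial ratios and re-reduce the exponents into the range $0\le j_k<n_k(\nu,R)$ (using conclusion 3)) before conclusion 2) for $R$ can be invoked. This is exactly the bookkeeping you flag as the main obstacle, and it is where the substance of the argument in \cite{CV1} lies; your outline is consistent with it but does not carry it out.
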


We have that
$$
G(\beta_0(\nu,R_1),\ldots,\beta_i(\nu, R_1))=G(\beta_0(\nu,R),\ldots,\beta_{i+1}(\nu,R))
$$
for $i\ge 1$ so that 
$$
\overline n_i(\nu,R_1)=\overline n_{i+1}(\nu,R)\mbox{ for }i\ge 1
$$ 
and
$$
R_1/m_{R_1}[\alpha_1(\nu,R_1),\ldots,\alpha_i(\nu,R_1)]=R/m_R[\alpha_1(\nu,R),\ldots,\alpha_{i+1}(\nu,R)]\mbox{ for }i\ge 1
$$
so that 
$$
d_i(\nu,R_1)=d_{i+1}(\nu,R)\mbox{ and }n_{i}(\nu,R_1)=n_{i+1}(\nu,R)\mbox{ for }i\ge 1.
$$

Let $\sigma_0(\nu,R_1)=0$ and inductively define 
    $$
    \sigma_{i+1}(\nu,R_1)=\min\{j>\sigma_i(1)\mid n_j(\nu,R_1)>1\}.
    $$

    We then have that 
    $\sigma_0(\nu,R_1)=0$ and for $i\ge 1$,  $\sigma_i(\nu,R_1)=\sigma_{i+1}(\nu,R)-1$ if $n_1(\nu,R)>1$ and $\sigma_i(\nu,R_1)=\sigma_i(\nu,R)-1$ if $n_1(\nu,R)=1$, and for all $j\ge 0$,
    $$
    S(\beta_0(\nu,R_1),\beta_1(\nu,R_1),\ldots,\beta_{\sigma_{j+1}(\nu,R_1)}(\nu,R_1))=S(\beta_{\sigma_0(1)}(\nu,R_1),\beta_{\sigma_1(\nu,R_1)},\ldots,\beta_{\sigma_j(\nu,R_1)}(\nu,R_1))
    $$

 Iterating this construction, we produce a sequence of sequences of quadratic transforms along $\nu$,
    $$
    R\rightarrow R_1\rightarrow \cdots\rightarrow R_{\sigma_1(\nu,R)}.
    $$
    
 Now $x, \overline y=P_{\sigma_1(\nu,R)}$ are regular parameters in $R$. By (\ref{eqX50}) (with $y$ replaced with $\overline y$) we have that $R_{\sigma_1(\nu,R)}$ has regular parameters
 \begin{equation}\label{eq8}
 x_1=(x^b\overline y^{-a})^{\epsilon},\,\,\,
 y_1=(x^{-\omega}\overline y^{\overline n_{\sigma_1(\nu,R)}(\nu,R)})^{\epsilon}
 \end{equation}
 where $\omega,a,b\in \NN$ satisfy
 $\epsilon=\overline n_{\sigma_1(\nu,R)}(\nu,R)b-\omega a=\pm 1$.

 Further, $R_{\sigma_1(\nu,R_1)}$ has regular parameters $x_{\sigma_1(\nu,R)}, y_{\sigma_1(\nu,R)}$ where $x=\delta x_{\sigma_1(\nu,R_1)}^{\overline n_{\sigma_1(\nu,R)}(\nu,R_1)}$ and $y_{\sigma_1(\nu,R_1)}={\rm st}_{R_{\sigma_1}(\nu,R_1)}P_{\sigma_1(\nu,R)}(\nu,R)$ with $\delta\in R_{\sigma_1(\nu,R)}$ a unit.

For the remainder of this section, we will suppose that $R$ is a two dimensional regular local ring and $\nu$ is a non discrete rational rank 1  valuation of the quotient field of $R$ with valuation ring $V_{\nu}$, so that $V_{\nu}/m_{\nu}$ is algebraic over $R/m_R$. Suppose that $f\in R$ and $\nu(f)=\gamma$. We will denote the class of $f$ in $\mathcal P_{\gamma}(R)/\mathcal P^+_{\gamma}(R)\subset {\rm gr}_{\nu}(R)$ by ${\rm in}_{\nu}(f)$.
By Theorem \ref{TheoremG2}, we have that ${\rm gr}_{\nu}(R)$ is generated by the initial forms of the $P_i(\nu,R)$ as an $R/m_R$-algebra. That is, 
$$
\begin{array}{l}
{\rm gr}_{\nu}(R)=R/m_R[{\rm in}_{\nu}(P_0(\nu,R)),{\rm in}_{\nu}(P_1(\nu,R)),\ldots]\\
=R/m_R[{\rm in}_{\nu}(P_{\sigma_0(\nu,R)}(\nu,R)),{\rm in}_{\nu}(P_{\sigma_1(\nu,R)}(\nu,R)),\ldots].
\end{array}
$$
Thus the semigroup
$S^R(\nu)=\{\nu(f)\mid f\in R\}$ is equal to 
$$
S^R(\nu)=S(\beta_0(\nu,R),\beta_1(\nu,R),\ldots)=S(\beta_{\sigma_0(\nu,R)}(\nu,R),\beta_{\sigma_1(\nu,R)}(\nu,R),\ldots)
$$
and the value group
$$
\Phi_{\nu}=G(\beta_0(\nu,R),\beta_1(\nu,R)\ldots)
$$
and the residue field of the valuation ring
$$
V_{\nu}/m_{\nu}=R/m_R[\alpha_1(\nu,R),\alpha_2(\nu,R),\ldots]=R/m_R[\alpha_{\sigma_1}(\nu,R),\alpha_{\sigma_2}(\nu,R),\ldots]
$$

 By 1) of Theorem \ref{Theorem1*}, every element $\beta\in S^{R}(\nu)$ has a unique expression
 $$
 \beta=\sum_{i=0}^ra_i\beta_i(\nu,R)
 $$
 for some $r$ with $a_i\in \NN$ for all $i$ and $0\le a_i<n_i(\nu,R)$ for $1\le i$. In particular, if $a_i\ne 0$ in the expansion then $\beta_i(\nu,R)=\beta_{\sigma_j(\nu,R)}(\nu,R)$ for some $j$.

\begin{Lemma}\label{Lemma1} Let
$$
\sigma_i=\sigma_i(\nu,R), \beta_i=\beta_i(\nu,R), P_{i}=P_i(\nu,R), n_i=n_i(\nu,R), \overline n_i=\overline n_i(\nu,R),
$$
$$
\sigma_i(1)=\sigma_i(\nu,R_{\sigma_1}), \beta_i=\beta_i(\nu,R_{\sigma_1}), P_i(1)=P_i(\nu,R_{\sigma_1}), n_i(1)=n_i(\nu,R_{\sigma_1}), \overline n_i(1)=\overline n_i(\nu,R_{\sigma_1}).
$$

Suppose $i\in \NN$, $r\in \NN$  and $a_j\in \NN$ for $j=0,\ldots, r$ with $0\le a_j<n_{\sigma_j}$ for $j\ge 1$ are such that 
$$
\nu(P_{\sigma_0}^{a_0}\cdots P_{\sigma_r}^{a_r})> \nu(P_{\sigma_i})
$$
 or $r<i$ and 
 $$
 \nu(P_{\sigma_0}^{a_0}\cdots P_{\sigma_r}^{a_r})= \nu(P_{\sigma_i}).
 $$
 By (\ref{eqZ1}) and Theorem \ref{birat}, we have expressions in 
 $$
 R_{\sigma_1}=R[x_1,y_1]_{m_{\nu}\cap R[x_1,y_1]}
 $$
 where $x_1,y_1$ are defined by (\ref{eq8})
$$
P_{\sigma_0}^{a_0}\cdots P_{\sigma_r}^{a_r}=y_1^{aa_0+ba_1}P_{\sigma_1(1)}(1)^{a_2}\cdots P_{
\sigma_{r-1}(1)}(1)^{a_r}P_{\sigma_0(1)}(1)^t
$$
where $t=\overline n_{\sigma_1}a_0+\omega a_1+\omega n_{\sigma_1}a_2+\cdots+\omega n_{\sigma_1}\cdots n_{\sigma_{r-1}}a_r$ and 
$$
P_{\sigma_i}=
\left\{
\begin{array}{ll}
y_1^aP_{\sigma_0(1)}(1)^{\overline n_{\sigma_1}}&\mbox{ if }i=0\\
y_1^bP_{\sigma_0(1)}(1)^{\omega}&\mbox{ if }i=1\\
P_{\sigma_{i-1}(1)}(1)P_{\sigma_0(1)}(1)^{\omega n_{\sigma_1}\cdots n_{\sigma_{i-1}}}&\mbox{ if }i\ge 2.
\end{array}\right.
$$
Let 
$$
\lambda=\left\{\begin{array}{ll}
\overline n_{\sigma_1}&\mbox{ if }i=0\\
\omega&\mbox{ if }i=1\\
\omega n_{\sigma_1}\cdots n_{\sigma_{i-1}}&\mbox{ if }i\ge 2.
\end{array}\right.
$$
Then 
$$
t>\lambda,
$$
except  in the case where $i=1$, $P_{\sigma_0}^{a_0}\cdots P_{\sigma_r}^{a_r}=P_{\sigma_0}$, and $\overline n_{\sigma_1}=\omega=1$.
In this  case $\lambda=t$. 
\end{Lemma}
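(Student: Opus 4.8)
The plan is to push everything into $R_{\sigma_1}$ and to read off $t$ and $\lambda$ as exponents of $x_1=P_{\sigma_0(1)}(1)$. First I would apply $\nu$ to the two displayed identities of the lemma; since $\nu(y_1)=0$ and $\overline n_{\sigma_1}\nu(x_1)=\nu(x)=\beta_{\sigma_0}$, this gives the closed formulas $\nu(P_{\sigma_0}^{a_0}\cdots P_{\sigma_r}^{a_r})=t\,\nu(x_1)+\sum_{j=2}^{r}a_j\,\nu(P_{\sigma_{j-1}(1)}(1))$ and $\nu(P_{\sigma_i})=\lambda\,\nu(x_1)+\varepsilon_i$, where $\varepsilon_i=0$ for $i\le 1$ and $\varepsilon_i=\nu(P_{\sigma_{i-1}(1)}(1))$ for $i\ge 2$. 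In particular the identity $\overline n_{\sigma_1}\beta_{\sigma_1}=\omega\beta_{\sigma_0}$ drops out of the $i=1$ case. Using the compatibility of generating sequences under the transformations $R\to R_{\sigma_1}$ recorded just before the lemma (in particular the shifted identification $n_{\sigma_j}(\nu,R)=n_{\sigma_{j-1}(1)}(\nu,R_{\sigma_1})$), the hypotheses $0\le a_j<n_{\sigma_j}$ say precisely that, after discarding the unit factor $y_1^{\,aa_0+ba_1}$, both $P_{\sigma_0}^{a_0}\cdots P_{\sigma_r}^{a_r}$ and $P_{\sigma_i}$ become standard monomials for the generating sequence of $R_{\sigma_1}$, with $P_{\sigma_0(1)}(1)$-exponents $t$ and $\lambda$ respectively. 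Thus the lemma reduces to the numerical statement that the value hypothesis forces $t\ge\lambda$, with $t=\lambda$ only in the indicated configuration.

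Then I would run the estimate, splitting into $i=0$, $i=1$, $i\ge 2$, and using two inputs: the growth inequality of part 1) of Theorem \ref{Theorem1*} applied to $R_{\sigma_1}$, $\nu(P_{l+1}(1))>n_l(1)\,\nu(P_l(1))$, hence the standard-monomial bound $\sum_{l=1}^{L}(n_l(1)-1)\nu(P_l(1))<\nu(P_{L+1}(1))$; and the identity $\overline n_{\sigma_1}\beta_{\sigma_1}=\omega\beta_{\sigma_0}$ together with $\overline n_{\sigma_1}\ge 2$ (which one checks from the uniqueness of standard representations of elements of $S^R(\nu)$ recorded after Theorem \ref{TheoremG2}). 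If $a_j\ne 0$ for some $j\ge 2$ then directly $t\ge\omega n_{\sigma_1}\cdots n_{\sigma_{j-1}}$, and comparing with $\lambda$ via that identity gives $t>\lambda$. If $a_j=0$ for all $j\ge 2$ the tail sums vanish, the value hypothesis collapses to $t\,\nu(x_1)\ge\lambda\,\nu(x_1)+\varepsilon_i$ (strict unless $r<i$), so $t\ge\lambda$ at once; the boundary $t=\lambda$ forces $\varepsilon_i=0$ (hence $i\le 1$) and the equality branch $r<i$ (hence $i=1$, $r=0$, $P_{\sigma_0}^{a_0}\cdots P_{\sigma_r}^{a_r}=x^{a_0}$), and solving $a_0\overline n_{\sigma_1}=\omega$ against uniqueness of standard representations then forces $a_0=1$ and $\overline n_{\sigma_1}=\omega=1$, i.e. the exceptional configuration. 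The one remaining subcase, $r<i$ with $a_j\ne 0$ for some $2\le j\le r$, is handled by noting that every index $\sigma_{j-1}(1)$ occurring in the tail sum lies strictly below $\sigma_{i-1}(1)$, so the standard-monomial bound gives $\sum_{j=2}^{r}a_j\,\nu(P_{\sigma_{j-1}(1)}(1))<\nu(P_{\sigma_{i-1}(1)}(1))=\varepsilon_i$, which again forces $t>\lambda$.

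The step I expect to be the main obstacle is the equality analysis in the low-index cases $i=0$ and $i=1$: there $\lambda$ is only $\overline n_{\sigma_1}$ or $\omega$ and is decoupled from the fast growth of the generating sequence of $R_{\sigma_1}$, so the crude bound does not by itself exclude $t=\lambda$ when higher-index generators appear in the monomial. Pinning this down is exactly where one must combine the identity $\overline n_{\sigma_1}\beta_{\sigma_1}=\omega\beta_{\sigma_0}$, the inequality $\overline n_{\sigma_1}\ge 2$, and the growth bound, and then verify that the unique surviving equality case is the one in the statement. Everything else is routine bookkeeping with the substitution formulas already written down in the statement of the lemma.
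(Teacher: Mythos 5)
Your overall route is the same as the paper's: substitute via (\ref{eqZ1}) and Theorem \ref{birat}, read off $t$ and $\lambda$ as $x_1$-exponents, take values, and split into the cases $i\ge 2$, $i=1$, $i=0$ (with the subcases $r\ge i$ and $r<i$), using the growth inequality of 1) of Theorem \ref{Theorem1*} for the tail sum and the relation $\overline n_{\sigma_1}\beta_{\sigma_1}=\omega\beta_{\sigma_0}$ for the low-index cases. However, one of the two ``inputs'' you declare is false: you cannot have $\overline n_{\sigma_1}\ge 2$ in general. The definition of $\sigma_1$ only gives $n_{\sigma_1}=\overline n_{\sigma_1}d_{\sigma_1}>1$, and the jump can occur entirely in the residue degree $d_{\sigma_1}$ while the group index $\overline n_{\sigma_1}=[G(\beta_0,\ldots,\beta_{\sigma_1}):G(\beta_0,\ldots,\beta_{\sigma_1-1})]$ stays equal to $1$ (e.g.\ $\beta_{\sigma_1}\in\beta_{\sigma_0}\ZZ$ but $\alpha_{\sigma_1}\notin R/m_R$; this cannot be ruled out by ``uniqueness of standard representations,'' which constrains semigroup elements, not the residue extensions). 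Indeed the exceptional case of the lemma itself has $\overline n_{\sigma_1}=\omega=1$, and you yourself derive $\overline n_{\sigma_1}=1$ at the end of your equality analysis; if $\overline n_{\sigma_1}\ge 2$ held, the lemma's exceptional clause would be vacuous. So your argument is internally inconsistent at exactly the delicate point you flag.

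The inequality matters where you invoke it: ruling out $t=\lambda$ when $i\in\{0,1\}$ and some $a_j\ne 0$ with $j\ge 2$. For $i=1$ the correct substitute is simply $n_{\sigma_1}>1$, which gives $t\ge\omega n_{\sigma_1}>\omega=\lambda$; no lower bound on $\overline n_{\sigma_1}$ is needed. For $i=0$ the comparison is $t\ge\omega n_{\sigma_1}=\omega\overline n_{\sigma_1}d_{\sigma_1}$ against $\lambda=\overline n_{\sigma_1}$, which is strict exactly when $\omega d_{\sigma_1}>1$, and neither your (false) bound nor the true bound $n_{\sigma_1}>1$ settles this. The paper's proof of the $i=0$ case instead reduces to the monomial $P_{\sigma_1}^{a_1}$ and extracts $\omega a_1>\overline n_{\sigma_1}$ directly from the value hypothesis $a_1\beta_{\sigma_1}>\beta_{\sigma_0}$ together with $\beta_{\sigma_1}/\beta_{\sigma_0}=\omega/\overline n_{\sigma_1}$; that is the mechanism you should be using in place of a pointwise lower bound on $\overline n_{\sigma_1}$. (A smaller issue, which the paper shares, is the last step of your equality analysis: from $a_0\overline n_{\sigma_1}=\omega$ and $\gcd(\omega,\overline n_{\sigma_1})=1$ one gets $\overline n_{\sigma_1}=1$ and $a_0=\omega$, and pinning down $a_0=\omega=1$ requires a further argument.)
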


\begin{proof} First suppose that $i\ge 2$ and $r\ge i$.  Then 
$$
t-\lambda=(\overline n_{\sigma_1}a_0+\omega a_1+\omega n_{\sigma_1}a_2+\cdots +\omega n_{\sigma_1}\cdots n_{\sigma_{r-1}}a_r)-\omega n_{\sigma_1}\cdots n_{\sigma_{i-1}}>0.
$$
Now suppose that $i\ge 2$ and $r<i$.
We have that
$$
\begin{array}{l}
(\overline n_1a_0+\omega a_1+\ldots+\omega n_1\cdots n_{r-1}a_r-\omega n_1(\nu,R)\cdots n_{i-1})\beta_{\sigma_0(1)}(1)\\
\ge \beta_{\sigma_{i-1}(1)}(1)-a_2\beta_{\sigma_1(1)}(1)-\ldots-a_r\beta_{\sigma_{r-1}(1)}(1)>0
\end{array}
$$
since $n_{\sigma_j(1)}(1)=n_{\sigma_{j+1}}$ for all $j$, and so $n_{\sigma_{j+1}}\beta_{\sigma_j(1)}(1)<\beta_{\sigma_{j+1}(1)}(1)$ for all $j$. 

Now suppose that $i=1$. As in the proof for the case $i\ge 2$ we have that $t-\lambda>0$ if $r\ge 1$, so suppose that $i=1$ and $r=0$. Then $\overline n_{\sigma_1}\beta_{\sigma_1}=\omega \beta_{\sigma_0}$.
From our assumption $a_0\nu(P_0)\ge \nu(P_1)$ we obtain $t-\lambda=\overline n_{\sigma_1}a_0-\omega\ge 0$ with equality if and  only if $a_0=\omega=\overline n_{\sigma_1}=1$ since ${\rm gcd}(\omega,\overline n_{\sigma_1})=1$.

Now suppose $i=0$. As in the previous cases, we have $t-\lambda>0$ if $r>1$ and $t-\lambda>0$ if $r=1$ except possibly if  $P_0^{a_0}\cdots P_r^{a_r}=P_1^{a_1}$. We then have that $\nu(P_{\sigma_1}^{a_1})>\nu(P_{\sigma_0})$, and so
$$
a_1\frac{\beta_{\sigma_1}}{\beta_{\sigma_0}}>1.
$$
Since
$$
\frac{\beta_{\sigma_1}}{\beta_{\sigma_0}}=\frac{\omega}{\overline n_{\sigma_1}},
$$
we have that $t-\lambda=\omega a_1-\overline n_{\sigma_1}>0$.

\end{proof}

\begin{Lemma}\label{Lemma2} Let notation be as in Lemma \ref{Lemma1}. Suppose that $f\in R$, with $\nu(f)=\nu(P_{\sigma_i})$ for some $i\ge 0$, and that $f$ has an expression of the form of Theorem \ref{TheoremG2}, 
$$
f=cP_{\sigma_i}+\sum_{j=1}^sc_iP_{\sigma_0}^{a_0(j)}P_{\sigma_1}^{a_1(j)}\cdots P_{\sigma_r}^{a_r(j)}+h
$$
where $s,r\in\NN$, $c, c_j$  are units in $R$, with $0\le a_k(j)<n_k$ for $1\le k\le r$ for $1\le j\le s$, 
$$
\nu(f)=\nu(P_{\sigma_i})\le \nu(P_{\sigma_0}^{a_0(j)}P_{\sigma_1}^{a_1(j)}\cdots P_{\sigma_r}^{a_r(j)})
$$
for $1\le j\le s$, $a_k(j)=0$ for $k\ge i$ if $\nu(f)=\nu(P_{\sigma_0}^{a_p(j)}\cdots P_{\sigma_r}^{a_r(j)})$
and $h\in  m_R^n$ with $n>\nu(f)$.
 Then 
${\rm st}_{R_{\sigma_1}}(f)$ is a unit in $R_{\sigma_1}$ if $i=0$ or 1 and if $i>1$,   there exists a unit $\overline c$ in $R_{\sigma_1}$ and $\Omega\in R_{\sigma_1}$ such that
$$
{\rm st}_{R_{\sigma_1}}(f)=\overline cP_{\sigma_{i-1}(1)}(1)+x_1\Omega
$$
with $\nu({\rm st}_{R_{\sigma_1}}(f))=\nu(P_{\sigma_{i-1}(1)}(1))$ and $\nu(P_{\sigma_{i-1}(1)}(1))\le\nu(x_1\Omega)$.
\end{Lemma}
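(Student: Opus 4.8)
The plan is to transform the given expansion of $f$ from $R$ up to $R_{\sigma_1}$ using the monomial identities of Lemma \ref{Lemma1}, factor out the exact power of the exceptional parameter $x_1=P_{\sigma_0(1)}(1)$ dividing every term, and identify what remains with ${\rm st}_{R_{\sigma_1}}(f)$.

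First I would record three features of $R\to R_{\sigma_1}$ that drive the bookkeeping. By (\ref{eqZ1}), applied with $y$ replaced by $\overline y=P_{\sigma_1}$ as in (\ref{eq8}), there are units $\delta_1,\delta_2\in R_{\sigma_1}$ with $x=\delta_1x_1^{\overline n_{\sigma_1}}$ and $\overline y=\delta_2x_1^{\omega}$, so $m_RR_{\sigma_1}=x_1^{\min\{\overline n_{\sigma_1},\omega\}}R_{\sigma_1}$. Since $\nu$ dominates $R_{\sigma_1}$, an element of $R_{\sigma_1}$ is a unit exactly when its $\nu$-value is $0$; in particular $y_1$ is a unit in $R_{\sigma_1}$ because $\nu(y_1)=0$. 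Finally, for $i\ge 2$ the element $P_{\sigma_{i-1}(1)}(1)$ is, up to a unit, a strict transform of $P_{\sigma_i}$ in $R_{\sigma_1}$ by Theorem \ref{birat} and its iterates, and so is not divisible by $x_1$.

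Next I would substitute the formulas of Lemma \ref{Lemma1} into $f=cP_{\sigma_i}+\sum_jc_jM_j+h$, with $M_j=P_{\sigma_0}^{a_0(j)}\cdots P_{\sigma_r}^{a_r(j)}$, writing $\lambda$ for the power of $x_1$ in the transform of $P_{\sigma_i}$ (so $\lambda=\overline n_{\sigma_1}$, $\omega$, or $\omega n_{\sigma_1}\cdots n_{\sigma_{i-1}}$ according as $i=0$, $1$, or $i\ge 2$). Each $M_j$ meets the hypotheses of Lemma \ref{Lemma1} for this same $i$: if $\nu(M_j)>\nu(f)$ this is the first alternative there, while if $\nu(M_j)=\nu(f)$ then $a_k(j)=0$ for $k\ge i$ by the hypothesis on the expansion, which is the second alternative. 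Hence the transform of $M_j$ is divisible by $x_1^{t(j)}$ with $t(j)\ge\lambda$, strictly $t(j)>\lambda$ except when $i=1$, $M_j=x$ and $\overline n_{\sigma_1}=\omega=1$. Also $h\in m_R^n$ gives ${\rm ord}_{x_1}(h)\ge n\min\{\overline n_{\sigma_1},\omega\}$, and since $\nu(m_R^n)=n\min\{\overline n_{\sigma_1},\omega\}\,\nu(x_1)>\nu(f)\ge\lambda\,\nu(x_1)$, we get $n\min\{\overline n_{\sigma_1},\omega\}>\lambda$ and hence ${\rm ord}_{x_1}(h)\ge\lambda+1$. Dividing through by $x_1^{\lambda}$ thus produces an element $E=f/x_1^{\lambda}\in R_{\sigma_1}$, and collecting terms gives $E=\overline c\,G+x_1\Omega$ with $\overline c$ a unit of $R_{\sigma_1}$, $\Omega\in R_{\sigma_1}$, and $G$ the monomial part of the transform of $P_{\sigma_i}$ (thus $G=P_{\sigma_{i-1}(1)}(1)$ if $i\ge 2$ and $G=1$ if $i\le 1$); in the exceptional case the extra monomial $M_j=x$ contributes a further unit multiple of $y_1^{a}$, still carrying no factor of $x_1$, which is absorbed into $\overline c$.

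Finally I would read off the conclusion. For $i\ge 2$, $G=P_{\sigma_{i-1}(1)}(1)\notin(x_1)$, so $E$ is not divisible by $x_1$ and ${\rm st}_{R_{\sigma_1}}(f)=E=\overline cP_{\sigma_{i-1}(1)}(1)+x_1\Omega$; comparing $\nu$-values via $\nu(y_1)=0$ gives $\nu(E)=\nu(f)-\lambda\,\nu(x_1)=\nu(P_{\sigma_{i-1}(1)}(1))$, from which $\nu(x_1\Omega)\ge\nu(P_{\sigma_{i-1}(1)}(1))$ follows by the ultrametric inequality. For $i=0$ or $1$ we have $G=1$, so $\nu(f)=\lambda\,\nu(x_1)$ and $\nu(E)=0$; since $E\in R_{\sigma_1}$, this forces $E$ to be a unit, so ${\rm st}_{R_{\sigma_1}}(f)$ is a unit. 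The step I expect to be the main obstacle is the valuation bookkeeping in the third paragraph: simultaneously extracting the strict inequality $t(j)>\lambda$ from Lemma \ref{Lemma1} when $i\ne 1$ and ${\rm ord}_{x_1}(h)\ge\lambda+1$ from the hypothesis on $h$, so that $E$ really is the strict transform and not a proper multiple of it; once the structure of $R\to R_{\sigma_1}$ is in hand the rest is formal.
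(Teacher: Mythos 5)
Your proposal is correct and follows the paper's proof in all essentials: substitute the monomial identities of Lemma \ref{Lemma1} into the expansion of $f$, observe that every term is divisible by $x_1^{\lambda}$ with the non-leading terms divisible by a strictly higher power (except in the one exceptional case), divide by $x_1^{\lambda}$, and identify the result with ${\rm st}_{R_{\sigma_1}}(f)$. The only point where you genuinely diverge is the exceptional case $i=1$, $\overline n_{\sigma_1}=\omega=1$, where the monomial $P_{\sigma_0}$ survives with $t_1=\lambda$ so that $E=cy_1^b+c_1y_1^a+x_1(\cdots)$. Your second paragraph's remark that this extra term is ``absorbed into $\overline c$'' would by itself be a gap, since a sum of unit multiples of units need not be a unit; the paper handles this by noting that $[y_1]=\left[\frac{P_{\sigma_0}}{P_{\sigma_1}}\right]\notin R/m_R$ (because $d_{\sigma_1}=n_{\sigma_1}>1$), so $c+c_1y_1$ has nonzero residue. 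However, your third paragraph supplies a different and valid argument that closes this: $\nu(E)=\nu(f)-\lambda\nu(x_1)=0$ and $E\in R_{\sigma_1}$, so $E$ is a unit because $\nu$ dominates $R_{\sigma_1}$. This valuation-theoretic shortcut is actually cleaner than the paper's residue-field computation and treats $i=0$ and $i=1$ uniformly, at the cost of obscuring the field-theoretic reason (linear independence of $1,[y_1]$ over $R/m_R$) that the leading coefficients cannot cancel. Your treatment of the $h$ term, extracting ${\rm ord}_{x_1}(h)\ge n\min\{\overline n_{\sigma_1},\omega\}>\lambda$ from $n\nu(m_R)>\nu(f)\ge\lambda\nu(x_1)$, is more explicit than the paper's, which simply writes $h=P_{\sigma_0(1)}(1)^t h'$ with $t>\lambda$; this is a welcome addition rather than a discrepancy.
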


\begin{proof} Let
$$
\lambda=\left\{\begin{array}{ll}
\overline n_1&\mbox{ if }i=0\\
\omega&\mbox{ if }i=1\\
\omega n_{\sigma_1}\cdots n_{\sigma_{r-1}}&\mbox{ if }i\ge 2
\end{array}\right.
$$

Then
$$
f=cH_i+\sum_{j=1}^sc_j(y_1)^{aa_0(j)+ba_1(j)}P_{\sigma_0(1)}(1)^{t_j}P_{\sigma_1}(1)^{a_2(j)}\cdots P_{\sigma_{r-1}(1)}(1)^{a_r(j)}
+P_{\sigma_0(1)}(1)^th'
$$
with 
$$
H_i=\left\{
\begin{array}{ll}
(y_1)^aP_{\sigma_0(1)}(1)^{\overline n_1}&\mbox{ if }i=0\\
(y_1)^bP_{\sigma_0(1)}(1)^{\omega}&\mbox{ if }i=1\\
P_{\sigma_0(1)}(1)^{\omega n_1\cdots n_{i-1}}P_{\sigma_{i-1}(1)}(1)&\mbox{ if }i\ge 2
\end{array}\right.
$$
and
$$
t_j=\overline n_1a_0(j)+\omega a_1(j)+\omega n_{\sigma_1}a_2+\cdots+\omega n_{\sigma_1}\cdots n_{\sigma_{r-1}}a_r(j)
$$
for $1\le j\le s$, $t>\lambda$ and $h'\in R_{\sigma_1}$.
By Lemma \ref{Lemma1}, if $i\ge 2$ or $i=0$, we have that $t_j>\lambda$ for all $j$. Thus $f=P_{\sigma_0(1)}(1)^{\lambda}\overline f$
where
$$
\overline f=cG_i+\sum_{j=1}^sc_jP_{\sigma_0(1)}(1)^{t_j-\lambda}P_{\sigma_1(1)}(1)^{a_2(j)}\cdots P_{\sigma_{r-1}(1)}(1)^{a_r(j)}
+P_{\sigma_0(1)}(1)^{t-\lambda}h'
$$
with 
$$
G_i=\left\{\begin{array}{ll}
(y_1)^a&\mbox{ if }i=0\\
(y_1)^b&\mbox{ if }i=1\\
P_{\sigma_{i-1}(1)}(1)&\mbox{ if }i\ge 2
\end{array}\right.
$$
is a strict transform $\overline f={\rm st}_{R_1}(f)$ of $f$ in $R_1$.

If $i=1$, then by Lemma \ref{Lemma1}, $t_j>\lambda$ for all $j$, except possibly for a single term (that we can assume is $t_1$) which is $P_{\sigma_0}$, and we have that $\omega=\overline n_{\sigma_1}=1$. In this case $t_1=\lambda$. Then
$$
\left[\frac{P_{\sigma_1}}{P_{\sigma_0}}\right]=\alpha_{\sigma_1}(\nu,R)\in V_{\nu}/m_{\nu}
$$
which has degree $d_{\sigma_1}(\nu,R)=n_{\sigma_1}>1$ over $R/m_R$. By (\ref{eqZ1}), $x=x_1$, $y=x_1y_1$ and 
$$
f=x_1[c+c_1y_1+x_1\Omega]
$$
with $\Omega\in R_{\sigma_1}$. We have that $c+c_1y_1$ is a unit in $R_{\sigma_1}$ since 
$$
[y_1]=\left[\frac{P_{\sigma_0}}{P_{\sigma_1}}\right]\not\in R/m_R.
$$

\end{proof}

\section{Finite generation implies no defect}

Suppose that $R$ is a two dimensional regular  local ring of $K$ and $S$ is a two dimensional regular  local ring  such that  $S$ dominates $R$ 
Let $K$ be the quotient field of $R$ and $K^*$ be the quotient field of $S$. Suppose that $K\rightarrow K^*$ is a finite separable field extension. Suppose that $\nu^*$ is a non discrete rational rank 1 valuation of $K^*$ 
such that $V_{\nu^*}/m_{\nu^*}$ is algebraic over $S/m_S$ and that  $\nu^*$ dominates $S$. Then we have a natural graded inclusion ${\rm gr}_{\nu}(R)\rightarrow {\rm gr}_{\nu^*}(S)$, so that for $f\in R$, we have that ${\rm in}_{\nu}(f)={\rm in}_{\nu^*}(f)$. Let $\nu=\nu^*|K$. Let $L=V_{\nu^*}/m_{\nu^*}$. Suppose that 
 ${\rm gr}_{\nu^*}(S)$ is a finitely generated  ${\rm gr}_{\nu}(R)$-algebra.

Let $x,y$ be regular parameters in $R$, with associated generating sequence to $\nu$, $P_0=P_0(\nu,R)=x,P_1=P_1(\nu,R)=y,P_2=P_2(\nu,R),\ldots$  in $R$ as constructed in Theorem \ref{Theorem1*}, with $U_i=U_i(\nu,R)$, $\beta_i=\beta_i(\nu,R)=\nu(P_i)$, $\gamma_i=\alpha_i(\nu,R)$, $m_i=m_i(\nu,R)$, $\overline m_i=\overline m_i(\nu,R)$, $d_i=d_i(\nu,R)$ and $\sigma_i=\sigma_i(\nu,R)$ defined as in Section \ref{SecGen}.

Let $u,v$ be regular parameters in $S$, with associated generating sequence to $\nu^*$, $Q_0=P_0(\nu^*,S)=u,Q_1=P_1(\nu^*,S)=v,Q_2=P_2(\nu^*,S),\ldots$  in $S$ as constructed in Theorem \ref{Theorem1*}, with $V_i=U_i(\nu^*,S)$, $\gamma_i=\beta_i(\nu^*,S)=\nu^*(Q_i)$, $\delta_i=\alpha_i(\nu^*,S)$, $n_i=n_i(\nu^*,S)$, $\overline n_i=\overline n_i(\nu^*,S)$, $e_i=\alpha_i(\nu^*,S)$ and $\tau_i=\sigma_i(\nu^*,S)$ defined as in Section \ref{SecGen}.

With our assumption that ${\rm gr}_{\nu^*}(S)$ is a finitely generated  ${\rm gr}_{\nu}(R)$-algebra, we have that for all sufficiently large $l$, 
\begin{equation}\label{eq6}
{\rm gr}_{\nu^*}(S)={\rm gr}_{\nu}(R)[{\rm in}_{\nu^*}Q_{\tau_0},\ldots,{\rm in}_{\nu^*}Q_{\tau_l}].
\end{equation}

\begin{Proposition}\label{Prop3} With our assumption that ${\rm gr}_{\nu^*}(S)$ is a finitely generated ${\rm gr}_{\nu}(R)$-algebra,  there exist integers $s>1$ and $r>1$ such that for all $j\ge 0$,

$$
\beta_{\sigma_{r+j}}=\gamma_{\tau_{s+j}},
\overline m_{\sigma_{r+j}}=\overline n_{\tau_{s+j}},
d_{\sigma_{r+j}}=e_{\tau_{s+j}},
m_{\sigma_{r+j}}=n_{\tau_{s+j}},
$$
$$
G(\beta_{\sigma_0},\ldots,\beta_{\sigma_{r+j}})\subset G(\gamma_{\tau_0},\ldots,\gamma_{\tau_{s+j}}),
$$ 
$$
 [G(\gamma_{\tau_0},\ldots,\gamma_{\tau_{s+j}}):G(\beta_{\sigma_0},\ldots,\beta_{\sigma_{r+j}})]=e(\nu^*/\nu),
$$
$$
R/m_R[\delta_{\sigma_1},\ldots,\delta_{\sigma_{r+j}}]\subset S/m_S[\epsilon_{\tau_1},\ldots,\epsilon_{\tau_{s+j}}]
$$
and
$$
[S/m_S[\epsilon_{\tau_1},\ldots,\epsilon_{\tau_{s+j}}]:R/m_R[\delta_{\sigma_1},\ldots,\delta_{\sigma_{r+j}}]]=f(\nu^*/\nu).
$$

\end{Proposition}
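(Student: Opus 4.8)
The plan is to derive from finite generation that the generating sequence of $\nu$ in $R$ and that of $\nu^*$ in $S$ coincide term by term from some index on, and then to read off all the asserted identities and index computations by bookkeeping.

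\emph{Preliminary step.} Since $\Phi_\nu$ has finite index $e:=e(\nu^*/\nu)$ in the non-discrete rational rank one group $\Phi_{\nu^*}$, it is itself non-discrete of rational rank one; consequently neither generating sequence terminates (neither (\ref{eqL15}) nor (\ref{eqL10}) can occur, by rational rank one and by algebraicity of the residue field extensions), so $\{P_{\sigma_i}\}_{i\ge 0}$ and $\{Q_{\tau_i}\}_{i\ge 0}$ are infinite with $\beta_{\sigma_i}\to\infty$, $\gamma_{\tau_i}\to\infty$, and $[{\rm QF}({\rm gr}_{\nu^*}(S)):{\rm QF}({\rm gr}_{\nu}(R))]=ef$, $f:=f(\nu^*/\nu)$, by (\ref{int4}). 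Fix $l$ as in (\ref{eq6}), so ${\rm gr}_{\nu^*}(S)={\rm gr}_{\nu}(R)[{\rm in}_{\nu^*}Q_{\tau_0},\ldots,{\rm in}_{\nu^*}Q_{\tau_l}]$. Comparing values of nonzero homogeneous elements of this equality (using that ${\rm gr}_{\nu^*}(S)$ is a domain) gives
$$
S^S(\nu^*)=S^R(\nu)+S(\gamma_{\tau_0},\ldots,\gamma_{\tau_l});
$$
inverting the nonzero homogeneous elements and passing to degree-zero parts, which recover $V_\nu/m_\nu\subseteq V_{\nu^*}/m_{\nu^*}$, gives that $V_{\nu^*}/m_{\nu^*}$ is generated over $V_\nu/m_\nu$ by $\epsilon_{\tau_1},\ldots,\epsilon_{\tau_l}$. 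Finally, because $\Phi_{\nu^*}$ has rational rank one, $G(\gamma_{\tau_0},\ldots,\gamma_{\tau_l})$ is infinite cyclic and $S(\gamma_{\tau_0},\ldots,\gamma_{\tau_l})$ is cofinite in it.

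\emph{Matching the tails.} This is the heart of the proof, and the step I expect to be the main obstacle. By Theorem \ref{Theorem1*} the sequence of triples $(\beta_{\sigma_i},\overline m_{\sigma_i},d_{\sigma_i})$ and the residue tower $R/m_R\subseteq R/m_R[\alpha_{\sigma_1}]\subseteq\cdots$ are produced by a recursive recipe depending only on $S^R(\nu)$ and on the valuewise filtration of $V_\nu/m_\nu$, and likewise for $S$; moreover, in each graded component of ${\rm gr}_{\nu^*}(S)$ the standard monomials are $S/m_S$-linearly independent (by the linear independence statements in Theorems \ref{Theorem1*} and \ref{TheoremG2}), so both the $Q_{\tau_j}$ and the standard-monomial expansions in $S$ of the elements $P_{\sigma_i}\in R\subseteq S$ are rigidly pinned down. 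Using the preliminary step — that $S^S(\nu^*)$ exceeds $S^R(\nu)$ only by adding a semigroup cofinite in a fixed cyclic group, and $V_{\nu^*}/m_{\nu^*}$ is a fixed finite extension of $V_\nu/m_\nu$ — I would show that the ``new generator'' events (a jump in the value group or in the residue field) occurring at values above a sufficiently large threshold happen at exactly the same values on the two sides, with the same group indices and residue degrees; it may be convenient first to replace $R$ and $S$ by compatible sequences of quadratic transforms along $\nu^*$, as in Theorem \ref{birat} and Lemmas \ref{Lemma1} and \ref{Lemma2}, so that the initial segments of the two sequences are already aligned. The output is an order-preserving bijection between $\{\sigma_i:i\ge r\}$ and $\{\tau_j:j\ge s\}$ for suitable $r,s>1$, under which $\beta_{\sigma_{r+j}}=\gamma_{\tau_{s+j}}$, $\overline m_{\sigma_{r+j}}=\overline n_{\tau_{s+j}}$ and $d_{\sigma_{r+j}}=e_{\tau_{s+j}}$, hence also $m_{\sigma_{r+j}}=\overline m_{\sigma_{r+j}}d_{\sigma_{r+j}}=\overline n_{\tau_{s+j}}e_{\tau_{s+j}}=n_{\tau_{s+j}}$, for all $j\ge0$.

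\emph{The index computations.} Enlarging $r$ and $s$ along this bijection (which preserves every identity above) I may assume in addition that $\beta_{\sigma_0},\ldots,\beta_{\sigma_{r-1}}\in G(\gamma_{\tau_0},\ldots,\gamma_{\tau_s})$ and that $\alpha_{\sigma_1},\ldots,\alpha_{\sigma_{r-1}}$ lie in $S/m_S[\epsilon_{\tau_1},\ldots,\epsilon_{\tau_s}]$. Then for every $j\ge 0$ we get $G(\beta_{\sigma_0},\ldots,\beta_{\sigma_{r+j}})\subseteq G(\gamma_{\tau_0},\ldots,\gamma_{\tau_{s+j}})$ and $R/m_R[\alpha_{\sigma_1},\ldots,\alpha_{\sigma_{r+j}}]\subseteq S/m_S[\epsilon_{\tau_1},\ldots,\epsilon_{\tau_{s+j}}]$; passing from $j$ to $j+1$ multiplies the left-hand group by $\overline m_{\sigma_{r+j+1}}$ and the right-hand group by the equal number $\overline n_{\tau_{s+j+1}}$, and multiplies the two residue fields by the equal degrees $d_{\sigma_{r+j+1}}$ and $e_{\tau_{s+j+1}}$. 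Hence both indices in the statement are independent of $j$; letting $j\to\infty$ and using (\ref{eq50}) and (\ref{eq51}) (and their analogues for $\nu^*$ and $S$), they equal $[\Phi_{\nu^*}:\Phi_\nu]=e(\nu^*/\nu)$ and $[V_{\nu^*}/m_{\nu^*}:V_\nu/m_\nu]=f(\nu^*/\nu)$, respectively, which completes the proof. The delicate point remains the tail-matching: $R$ is a proper subring of $S$, so $P_{\sigma_i}$ and $Q_{\tau_j}$ are a priori unrelated and one controls only their initial forms, and turning ``the graded subalgebra generated by the ${\rm in}_{\nu^*}P_{\sigma_i}$ and finitely many ${\rm in}_{\nu^*}Q_{\tau_j}$ is everything'' into ``the two generating sequences are eventually identical'' requires tracking the value-group and residue-field contributions simultaneously.
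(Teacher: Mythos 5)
Your preliminary step and your final index computation (stabilize, then pass to the union using (\ref{eq50}) and (\ref{eq51}) to identify the two constant indices with $e(\nu^*/\nu)$ and $f(\nu^*/\nu)$) agree in substance with the end of the paper's proof. But the central step — producing $r,s$ with $\beta_{\sigma_{r+j}}=\gamma_{\tau_{s+j}}$, $\overline m_{\sigma_{r+j}}=\overline n_{\tau_{s+j}}$, $d_{\sigma_{r+j}}=e_{\tau_{s+j}}$ for all $j\ge 0$ — is exactly the step you leave as ``I would show that the new-generator events happen at the same values with the same indices,'' and you yourself flag it as the main obstacle. As written this is a genuine gap: nothing in your preliminary step (cofiniteness of the semigroup extension, finiteness of the residue extension) by itself forces the jumps on the two sides to synchronize, and no rigidity argument is supplied.

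The paper closes this gap with two concrete devices you are missing. First, for each $s\ge l$ set $A_{\tau_s}=S/m_S[{\rm in}_{\nu^*}Q_{\tau_0},\ldots,{\rm in}_{\nu^*}Q_{\tau_s}]$ and $r_s=\max\{j\mid {\rm in}_{\nu^*}P_{\sigma_j}\in A_{\tau_s}\}$; then a direct sandwich argument gives $\beta_{\sigma_{r_s+1}}=\gamma_{\tau_{s+1}}$ for \emph{every} such $s$: if $\beta_{\sigma_{r_s+1}}>\gamma_{\tau_{s+1}}$, then in the expression of ${\rm in}_{\nu^*}Q_{\tau_{s+1}}$ as an element of ${\rm gr}_{\nu}(R)[{\rm in}_{\nu^*}Q_{\tau_0},\ldots,{\rm in}_{\nu^*}Q_{\tau_s}]$ no generator ${\rm in}_{\nu^*}P_{\sigma_j}$ with $j>r_s$ can occur (their values are too large), so ${\rm in}_{\nu^*}Q_{\tau_{s+1}}\in A_{\tau_s}$, contradicting the construction of the generating sequence; the reverse inequality similarly forces ${\rm in}_{\nu^*}P_{\sigma_{r_s+1}}\in A_{\tau_s}$, contradicting maximality of $r_s$. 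Second, the equalities $\overline m=\overline n$ and $d=e$ are \emph{not} proved directly; instead one introduces the integer invariants $\lambda_s=[G(\gamma_{\tau_0},\ldots,\gamma_{\tau_s}):G(\beta_{\sigma_0},\ldots,\beta_{\sigma_{r_s}})]$ and $\chi_s=[S/m_S[\epsilon_{\tau_1},\ldots,\epsilon_{\tau_s}]:R/m_R[\delta_{\sigma_1},\ldots,\delta_{\sigma_{r_s}}]]$ and shows by an elementary divisibility computation (for $\lambda_s$) and by the identity $S/m_S[\epsilon_{\tau_1},\ldots,\epsilon_{\tau_s}][\epsilon_{\tau_{s+1}}]=S/m_S[\epsilon_{\tau_1},\ldots,\epsilon_{\tau_s}][\delta_{\sigma_{r_s+1}}]$ extracted from (\ref{eqN2}), (\ref{eqN3}) and (\ref{eqZ51}) (for $\chi_s$) that both are non-increasing in $s$, with equality at a given step forcing $\overline m_{\sigma_{r_s+1}}=\overline n_{\tau_{s+1}}$, $d_{\sigma_{r_s+1}}=e_{\tau_{s+1}}$ and $r_{s+1}=r_s+1$. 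Since positive integers cannot decrease indefinitely, both stabilize, and from that point on all the asserted equalities hold. This monotone descent on $(\lambda_s,\chi_s)$ is the idea your proposal needs; without it, ``the two generating sequences eventually align'' remains an assertion rather than a proof.
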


\begin{proof} Let $l$ be as in (\ref{eq6}). For $s\ge l$, define the sub algebra $A_{\tau_s}$ of ${\rm gr}_{\nu^*}(S)$ by
$$
A_{\tau_s}=S/m_S[{\rm in}_{\nu^*}Q_{\tau_0},\ldots,{\rm in}_{\nu^*}Q_{\tau_s}].
$$
For $s\ge l$, let
$$
r_s=\max\{j\mid {\rm in}_{\nu^*}P_{\sigma_j}\in A_{\tau_s}\},
$$
$$
\lambda_s=[G(\gamma_{\tau_0},\ldots,\gamma_{\tau_s}):G(\beta_{\sigma_0},\ldots,\beta_{\sigma_{r_s}})],
$$
and
$$
\chi_s=[S/m_S[\epsilon_{\tau_0},\ldots,\epsilon_{\tau_s}]:R/m_R[\delta_{\sigma_0},\ldots,\delta_{\sigma_{r_s}}]].
$$
To simplify notation, we will write $r=r_s$. 

We  will now show that $\beta_{\sigma_{r+1}}=\gamma_{\tau_{s+1}}$. 
Suppose that $\beta_{\sigma_{r+1}}>\gamma_{\tau_{s+1}}$.  We have that 
$$
{\rm in}_{\nu^*}Q_{\tau_{s+1}}\in {\rm gr}_{\nu}(R)[{\rm in}_{\nu^*}Q_{\tau_0},\ldots,{\rm in}_{\nu^*}Q_{\tau_s}].
$$
Since
$$
\beta_{\sigma_{r+1}}<\beta_{\sigma_{r+2}}<\cdots
$$
we then have that
${\rm in}_{\nu^*}Q_{\tau_{s+1}}\in A_{\tau_s}$ which is impossible. Thus $\beta_{\sigma_{r+1}}\le \gamma_{\tau_{s+1}}$.
If $\beta_{\sigma_{r+1}}<\gamma_{\tau_{s+1}}$, then since
$$
\gamma_{\tau_{s+1}}<\gamma_{\tau_{s+2}}<\cdots
$$
and ${\rm in}_{\nu^*}P_{\sigma_{r+1}}\in {\rm gr}_{\nu^*}(S)$, we have that ${\rm in}_{\nu^*}P_{\sigma_{r+1}}\in A_{\tau_s}$, which is impossible. Thus
$\beta_{\sigma_{r+1}}=\gamma_{\tau_{s+1}}$.

We will now establish that either we have a reduction $\lambda_{s+1}<\lambda_s$ or
\begin{equation}\label{eq22}
\lambda_{s+1}=\lambda_s,\,\, \beta_{\sigma_{r+1}}=\gamma_{\tau_{s+1}}\mbox{ and } \overline m_{\sigma_{r+1}}=\overline n_{\tau_{s+1}}.
\end{equation}
Let $\omega$ be a generator of the group $G(\gamma_{\tau_1},\ldots,\gamma_{\tau_s})$, so that $G(\gamma_{\tau_1},\ldots,\gamma_{\tau_s})=\ZZ\omega$. We have that
$$
G(\gamma_{\tau_0},\ldots,\gamma_{\tau_{s+1}})=\frac{1}{\overline n_{\tau_{s+1}}}\ZZ\omega
$$
and
$$
G(\beta_{\sigma_0},\ldots,\beta_{\sigma_{r+1}})=\frac{1}{\overline m_{\sigma_{r+1}}}\ZZ(\lambda_s\omega).
$$
There exists a positive integer $f$ with $\mbox{gcd}(f,\overline n_{\tau_{s+1}})=1$ such that 
$$
\gamma_{\tau_{s+1}}=\frac{f}{\overline n_{\tau_{s+1}}}\omega
$$
 There exists a positive integer $g$ with $\mbox{gcd}(g,\overline m_{\sigma_{r+1}})=1$ such that
 $$
 \beta_{\sigma_{r+1}}=\frac{g}{\overline m_{\sigma_{r+1}}}\lambda_s\omega.
 $$
 Since $\beta_{\sigma_{r+1}}=\gamma_{\tau_{s+1}}$, we have 
 $$
 g\lambda_s \overline n_{\tau_{s+1}}=f \overline m_{\sigma_{r+1}}.
 $$
 Thus $\overline n_{\tau_{s+1}}$ divides $\overline m_{\sigma_{r+1}}$ and $\overline m_{\sigma_{r+1}}$ divides $\lambda_s \overline n_{\tau_{s+1}}$, so that
 $$
 a=\frac{\overline m_{\sigma_{r+1}}}{\overline n_{\tau_{s+1}}}
 $$
 is a positive integer and defining
 $$
 \overline\lambda=\frac{\lambda_s}{a},
 $$
 we have that $\overline\lambda$ is a positive integer with
 $$
 \frac{\lambda_s}{\overline m_{\sigma_{r+1}}}=\frac{\overline \lambda}{\overline n_{\tau_{s+1}}}
 $$
 and 
 $$
 \overline\lambda =[G(\gamma_{\tau_0},\ldots,\gamma_{\tau_{s+1}}):G(\beta_{\sigma_0},\ldots,\beta_{\sigma_{r+1}})].
 $$
 Since $\lambda_{s+1}\le \overline\lambda$,  either $\lambda_{s+1}<\lambda_s$ or $\lambda_{s+1}=\lambda_s$ and $\overline m_{\sigma_{r+1}}=\overline n_{\tau_{s+1}}$.
 
 We will now suppose that $s$ is sufficiently large that (\ref{eq22}) holds.  Since 
 $$
 {\rm in}_{\nu^*}Q_{\tau_{s+1}}\in {\rm gr}_{\nu^*}(S)={\rm gr}_{\nu}(R)[{\rm in}_{\nu^*}Q_{\tau_0},\ldots,{\rm in}_{\nu^*}Q_{\tau_s}],
 $$
 if $\overline n_{\tau_{s+1}}>1$ we have an expression
 \begin{equation}\label{eqN2}
 {\rm in}_{\nu^*}P_{\sigma_{r+1}}={\rm in}_{\nu^*}(\alpha){\rm in}_{\nu^*}Q_{\tau_{s+1}}
 \end{equation}
 in $\mathcal P_{\gamma_{\tau_{s+1}}}(S)/\mathcal P^+_{\gamma_{\tau_{s+1}}}(S)$ with $\alpha$ a unit in $S$  and if $\overline n_{\tau_{s+1}}=1$, 
 since ${\rm in}_{\nu^*}P_{\sigma_{r+1}}\not\in A_{\tau_s}$, we have an expression 
 \begin{equation}\label{eqN3}
 {\rm in}_{\nu^*}P_{\sigma_{r+1}}={\rm in}_{\nu^*}(\alpha){\rm in}_{\nu^*}Q_{\tau_{s+1}}
 +\sum {\rm in}_{\nu^*}(\alpha_J)({\rm in}_{\nu^*}Q_{\tau_0})^{j_0}\cdots ({\rm in}_{\nu^*}Q_{\tau_s})^{j_s} 
 \end{equation}
 in $\mathcal P_{\gamma_{\tau_{s+1}}}(S)/\mathcal P^+_{\gamma_{\tau_{s+1}}}(S)$ with $\alpha$ a unit in $S$ and the sum is over certain $J=(j_i,\ldots,j_s)\in \NN^s$ such that the  $\alpha_J$  are units in $S$,   and the terms
${\rm in}_{\nu^*}Q_{\tau_{s+1}}$ and the $({\rm in}_{\nu^*}Q_{\tau_0})^{j_0}\cdots ({\rm in}_{\nu^*}Q_{\tau_s})^{j_s} $ are linearly independent over $S/m_S$. 
 
 The monomial $U_{\sigma_{r+1}}$ in $P_{\sigma_0},\ldots, P_{\sigma_r}$ and the monomial $V_{\tau_{s+1}}$ in $Q_{\tau_0},\ldots, Q_{\tau_s}$ both have the value
 $\overline n_{\tau_{s+1}}\gamma_{\tau_{s+1}}=\overline m_{\sigma_{r+1}}\beta_{\sigma_{r+1}}$, and satisfy
 $$
 \epsilon_{\tau_{s+1}}=\left[\frac{Q_{\tau_{s+1}}^{\overline n_{\tau_{s+1}}}}{V_{\tau_{s+1}}}\right]
 $$
 and
 $$
 \delta_{\sigma_{r+1}}=\left[\frac{P_{\sigma_{r+1}}^{\overline n_{\tau_{s+1}}}}{U_{\sigma_{r+1}}}\right].
 $$
 Since $U_{\sigma_{r+1}}, V_{\tau_{s+1}}\in A_{\tau_s}$ and by (\ref{eqZ51}) and 2) of Theorem \ref{Theorem1*}, 
 we have that
 $$
 \left[\frac{V_{\tau_{s+1}}}{U_{\sigma_{r+1}}}\right]\in S/m_S[\epsilon_{\tau_1},\ldots,\epsilon_{\tau_s}].
 $$
 If $\overline n_{\tau_{s+1}}>1$, then by (\ref{eqN2}), we have
 $$
 \left[\frac{P_{\sigma_{r+1}}^{\overline n_{\tau_{s+1}}}}{U_{\sigma_{r+1}}}\right]
 =\left[\frac{V_{\tau_{s+1}}}{U_{\sigma_{r+1}}}\right]
 \left(\left[\alpha\right]^{\overline n_{\tau_{s+1}}}\left[\frac{Q_{\tau_{s+1}}^{\overline n_{\tau_{s+1}}}}{V_{\tau_{s+1}}}\right]\right)
 $$
 in $L=V_{\nu^*}/m_{\nu^*}$, and if $\overline n_{\tau_{s+1}}=1$, then by (\ref{eqN3}), we have
 $$
 \left[\frac{P_{\sigma_{r+1}}}{U_{\sigma_{r+1}}}\right]
 =\left[\frac{V_{\tau_{s+1}}}{U_{\sigma_{r+1}}}\right]
 \left(\left[\alpha\right]\left[\frac{Q_{\tau_{s+1}}}{V_{\tau_{s+1}}}\right]
 +\sum\left[\alpha_J\right]\left[\frac{Q_{\tau_0}^{j_0}\cdots Q_{\tau_s}^{j_s}}{V_{\tau_{s+1}}}
 \right] \right) .
 $$ 
 Thus by equation (\ref{eqZ51}),
 \begin{equation}\label{eqN4}
 S/m_S[\epsilon_{\tau_1},\ldots,\epsilon_{\tau_s}][\epsilon_{\tau_{s+1}}]=S/m_S[\epsilon_{\tau_1},\ldots,\epsilon_{\tau_s}][\delta_{\sigma_{r+1}}].
 \end{equation}
 We have a commutative diagram
 $$
 \begin{array}{ccccc}
S/m_S[\epsilon_{\tau_1},\ldots,\epsilon_{\tau_s}]&\rightarrow& S/m_S[\epsilon_{\tau_1},\ldots,\epsilon_{\tau_s},\epsilon_{\tau_{s+1}}]&=& S/m_S[\epsilon_{\tau_1},\ldots,\epsilon_{\tau_s}][\delta_{\sigma_{r+1}}]\\
\uparrow&&\uparrow\\
R/m_R[\delta_{\sigma_1},\ldots,\delta_{\sigma_r}]&\rightarrow&R/m_R[\delta_{\sigma_1},\ldots,\delta_{\sigma_r}][\delta_{\sigma_{r+1}}].
\end{array}
$$
Let
$$
\overline\chi=[S/m_S[\epsilon_{\tau_1},\ldots,\epsilon_{\tau_s},\epsilon_{\tau_{s+1}}]:R/m_R[\delta_{\sigma_1},\ldots,\delta_{\sigma_r},\delta_{\sigma_{r+1}}]].
$$
Since 
$$
S/m_S[\epsilon_{\tau_1},\ldots,\epsilon_{\tau_s},\epsilon_{\tau_{s+1}}]= S/m_S[\epsilon_{\tau_1},\ldots,\epsilon_{\tau_s}][\delta_{\sigma_{r+1}}],
$$
we have that $e_{\tau_{s+1}}|d_{\sigma_{r+1}}$. Further, 
$$
\frac{d_{\sigma_{r+1}}}{e_{\tau_{s+1}}}\overline\chi=\chi_s,
$$
whence $\overline\chi\le\chi_s$. Thus $\chi_{s+1}\le\chi_s$ and if $\chi_{s+1}=\chi_s$, then  $d_{\sigma_{r+1}}=e_{\tau_{s+1}}$ and $r_{s+1}=r_s+1$  since $P_{\sigma_{r+2}}\in A_{\tau_{s+1}}$ implies $\lambda_{s+1}<\lambda_s$ or $\chi_{s+1}<\chi_s$.

We may thus choose $s$ sufficiently large that there exists an integer $r>1$ such that for all $j\ge 0$,

$$
\beta_{\sigma_{r+j}}=\gamma_{\tau_{s+j}},
\overline m_{\sigma_{r+j}}=\overline n_{\tau_{s+j}},
d_{\sigma_{r+j}}=e_{\tau_{s+j}},
m_{\sigma_{r+j}}=n_{\tau_{s+j}},
$$
$$
G(\beta_{\sigma_0},\ldots,\beta_{\sigma_{r+j}})\subset G(\gamma_{\tau_0},\ldots,\gamma_{\tau_{s+j}}),
$$ 
there is a constant $\lambda$ (which does not depend on $j$) such that 
$$
 [G(\gamma_{\tau_0},\ldots,\gamma_{\tau_{s+j}}):G(\beta_{\sigma_0},\ldots,\beta_{\sigma_{r+j}})]=\lambda
 $$
$$
R/m_R[\delta_{\sigma_1},\ldots,\delta_{\sigma_{r+j}}]\subset S/m_S[\epsilon_{\tau_1},\ldots,\epsilon_{\tau_{s+j}}]
$$
and there is a constant $\chi$ (which does not depend on $j$) such that 
$$
[S/m_S[\epsilon_{\tau_1},\ldots,\epsilon_{\tau_{s+j}}]:R/m_R[\delta_{\sigma_1},\ldots,\delta_{\sigma_{r+j}}]]=\chi.
$$
Then
$$
\Phi_{\nu^*}=\cup_{j\ge 1}\frac{1}{\overline n_{\tau_{s+1}}\cdots\overline n_{\tau_{s+j}}}\ZZ\omega
$$
where $G(\gamma_{\tau_0},\ldots,\gamma_{\tau_s})=\ZZ\omega$,
and 
$$
\Phi_{\nu}=\cup_{j\ge 1}\frac{1}{\overline m_{\sigma_{r+1}}\cdots\overline m_{\sigma_{r+j}}}\lambda\ZZ\omega= \cup_{j\ge 1}\frac{1}{\overline n_{\tau_{s+1}}\cdots\overline n_{\tau_{s+j}}}\lambda\ZZ\omega
$$
so that
$$
\lambda=[\Phi_{\nu^*}:\Phi_{\nu}]=e(\nu^*/\nu).
$$
For $i\ge 0$, let $K_i=R/m_R[\delta_{\sigma_1},\ldots,\delta_{\sigma_{r+i}}]$ and $M_i=S/m_S[\epsilon_{\tau_1},\ldots,\epsilon_{\tau_{s+i}}]$.
We have that $M_{i+1}=M_i[\delta_{\sigma_{r+i+1}}]$ for $i\ge 0$ and $\chi=[M_i:K_i]$ for all $i$. Further,
$$
\cup_{i=0}^{\infty}M_i=V_{\nu^*}/m_{\nu^*}\mbox{ and }\cup_{i=0}^{\infty}K_i=V_{\nu}/m_{\nu}.
$$
 Thus if $g_1,\ldots,g_{\lambda}\in M_0$ form a basis of $M_0$ as a $K_0$-vector space, then $g_1,\ldots,g_{\lambda}$ form a basis of $M_i$ as a $K_i$-vector space for all $i\ge 0$. Thus
$$
\chi=[V_{\nu^*}/m_{\nu^*}:V_{\nu}/m_{\nu}]=f(\nu^*/\nu).
$$
\end{proof}

Let $r$ and $s$ be as in the conclusions of Proposition \ref{Prop3}.
There exists $\tau_t$ with $t\ge s$ such that we have a commutative diagram of inclusions of regular local rings (with the notation introduced  in  Section \ref{SecGen})
$$
\begin{array}{ccc}
R_{\sigma_r}&\rightarrow &S_{\tau_t}\\
\uparrow&&\uparrow\\
R&\rightarrow&S.
\end{array}
$$
After possibly increasing $s$ and $r$, we may assume that $R'\subset R_{\sigma_r}$, where $R'$ is the local ring of the conclusions of Proposition \ref{Prop15}.
Recall that $R$ has regular parameters $x=P_0$, $y=P_1$ and $S$ has regular parameters $u=Q_0$, $v=Q_1$, $R_{\sigma_r}$ has regular parameters $x_{\sigma_r}$, $y_{\sigma_r}$ such that
$$
x=\delta x_{\sigma_r}^{\overline m_{\sigma_1}\cdots \overline m_{\sigma_r}},\,\, y_{\sigma_r}={\rm st}_{R_{\sigma_r}}P_{\sigma_{r+1}}
$$
where $\delta$ is a unit in $R_{\sigma_r}$ and 
$S_{\tau_t}$ has regular parameters $u_{\tau_t}$, $v_{\tau_t}$ such that
$$
u=\epsilon u_{\tau_t}^{\overline n_{\tau_1}\cdots \overline n_{\tau_t}},\,\,v_{\tau_t}={\rm st}_{S_{\tau_t}}Q_{\tau_{t+1}}
$$
where $\epsilon$ is a unit in $S_{\tau_t}$. We may choose $t\gg 0$ so that we we have an expression 
\begin{equation}\label{eq7}
x_{\sigma_r}=\phi u_{\tau_t}^{\lambda}
\end{equation}
for some positive integer $\lambda$ where $\phi$ is a unit in $S_{\tau_t}$, since $\cup_{t=0}^{\infty}S_{\tau_t}=V_{\nu^*}$.

We have expressions $P_i=\psi_i x_{\sigma_r}^{c_i}$ in $R_{\sigma_r}$ where $\psi_i$ are units in $R_{\sigma_r}$ for $i\le \sigma_r$ so that 
$P_i=\psi_i^* u_{\tau_t}^{c_i\lambda}$ in $S_{\tau_t}$ where $\psi_i^*$ are units in $S_{\tau_t}$ for $i\le \sigma_r$ by (\ref{eq7}).

\begin{Lemma}\label{Lemma3} For $j\ge 1$ we have
$$
{\rm st}_{R_{\sigma_r}}(P_{\sigma_{r+j}})=u_{\tau_t}^{\lambda_j}{\rm st}_{S_{\tau_t}}(P_{\sigma_{r+j}})
$$
for some $\lambda_j\in \NN$,  where we regard $P_{\sigma_{r+j}}$ as an element of $R$ on the left hand side of the equation and regard $P_{\sigma_{r+j}}$ as an element of $S$ on the right hand side.  
\end{Lemma}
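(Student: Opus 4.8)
The plan is to derive the lemma directly from the single relation $x_{\sigma_r}=\phi u_{\tau_t}^{\lambda}$ of (\ref{eq7}), using that $x_{\sigma_r}=0$ is a local equation of the exceptional divisor of $\mbox{Spec}(R_{\sigma_r})\rightarrow\mbox{Spec}(R)$ and $u_{\tau_t}=0$ is a local equation of the exceptional divisor of $\mbox{Spec}(S_{\tau_t})\rightarrow\mbox{Spec}(S)$ (both single‑component divisors, by Theorem \ref{birat} and its iteration). As noted in Section \ref{SecGen}, strict transforms are only defined up to a unit, so the claimed identity is to be read up to a unit of $S_{\tau_t}$.

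Fix $j\ge 1$ and put $F=P_{\sigma_{r+j}}$, first regarded in $R$. Since $R_{\sigma_r}$ is a two dimensional regular local ring, hence a unique factorization domain in which $x_{\sigma_r}$ is prime, we may write $F=x_{\sigma_r}^{a}G$ in $R_{\sigma_r}$ with $x_{\sigma_r}\nmid G$ and $G={\rm st}_{R_{\sigma_r}}(F)$. Passing to $S_{\tau_t}$ along $R_{\sigma_r}\subset S_{\tau_t}$ and substituting $x_{\sigma_r}=\phi u_{\tau_t}^{\lambda}$, factor the genuine element $G\in S_{\tau_t}$ as $G=u_{\tau_t}^{\lambda_j}W$ with $\lambda_j\in\NN$ and $u_{\tau_t}\nmid W$; then in $S_{\tau_t}$ we have
$$
F=\phi^{a}u_{\tau_t}^{a\lambda+\lambda_j}W\mbox{ with }u_{\tau_t}\nmid\phi^{a}W.
$$

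Now regard $F=P_{\sigma_{r+j}}$ as an element of $S$ via $R\subset S$ and compute ${\rm st}_{S_{\tau_t}}(F)$. Because $u_{\tau_t}=0$ is a local equation of the whole exceptional divisor of $\mbox{Spec}(S_{\tau_t})\rightarrow\mbox{Spec}(S)$, the strict transform is the total transform $F$ with the maximal power of $u_{\tau_t}$ removed, so by the displayed factorization ${\rm st}_{S_{\tau_t}}(F)=\phi^{a}W$ up to a unit of $S_{\tau_t}$. Comparing with $G=u_{\tau_t}^{\lambda_j}W$ gives ${\rm st}_{R_{\sigma_r}}(P_{\sigma_{r+j}})=u_{\tau_t}^{\lambda_j}{\rm st}_{S_{\tau_t}}(P_{\sigma_{r+j}})$ up to a unit, with $\lambda_j={\rm ord}_{u_{\tau_t}}({\rm st}_{R_{\sigma_r}}(P_{\sigma_{r+j}}))\in\NN$, which is the assertion.

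The computation is short; the only point to handle with care — rather than a genuine obstacle — is keeping the strict transforms and units straight and making sure exactly one exceptional component enters at each end, so that ``strict transform $=$ total transform divided by the exceptional part'' is literally division by a power of $u_{\tau_t}$ with no further factors. That single‑component statement is precisely what the ``free'' quadratic sequences of Section \ref{SecGen} provide via Theorem \ref{birat}; beyond the already fixed relation (\ref{eq7}), the argument uses nothing from the finite generation hypothesis or the stabilization of Proposition \ref{Prop3}.
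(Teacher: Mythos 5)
Your proof is correct and follows essentially the same route as the paper's: both factor $P_{\sigma_{r+j}}$ as its strict transform times a power of the exceptional equation in each of $R_{\sigma_r}$ and $S_{\tau_t}$, substitute $x_{\sigma_r}=\phi u_{\tau_t}^{\lambda}$ from (\ref{eq7}), and compare $u_{\tau_t}$-orders, your $\lambda_j=\mathrm{ord}_{u_{\tau_t}}(\mathrm{st}_{R_{\sigma_r}}(P_{\sigma_{r+j}}))$ being exactly the paper's $g_j-f_j\lambda$. The surrounding remarks about single exceptional components and the unit ambiguity are consistent with how the paper uses these notions, so there is nothing to correct.
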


\begin{proof} Using (\ref{eq7}), we have 
$$
P_{\sigma_{r+j}}={\rm st}_{R_{\sigma_r}}(P_{\sigma_{r+j}})x_{\sigma_r}^{f_j}
={\rm st}_{R_{\sigma_r}}(P_{\sigma_{r+j}})u_{\tau_t}^{\lambda f_j}\phi^{f_j}
$$
where $f_j\in \NN$. 
Viewing $P_{\sigma_{r+j}}$ as an element of $S$, we have that
$$
P_{\sigma_{r+j}}={\rm st}_{S_{\tau_t}}(P_{\sigma_{r+j}})u_{\tau_t}^{g_j}
$$
for some $g_j\in \NN$. Since $u_{\tau_t}\not\,\mid {\rm st}_{S_{\tau_t}}(P_{\sigma_{r+j}})$, we have that $f_j\lambda \le g_j$ and so $\lambda_j=g_j-f_j\lambda\ge 0$.

\end{proof}

By induction in the sequence of quadratic transforms above $R$ and $S$  in Lemma \ref{Lemma2}, and since $\nu^*(P_{\sigma_{r+j}})=\beta_{\sigma_{r+j}}=\gamma_{\tau_{s+j}}$ by Proposition \ref{Prop3}, we have by (\ref{eqN2}) and (\ref{eqN3}) an expression 
\begin{equation}\label{eqN60}
{\rm st}_{S_{\tau_t}}(P_{\sigma_{r+j}})=c{\rm st}_{S_{\tau_t}}(Q_{\tau_{s+j}})+u_{\tau_t}\Omega
\end{equation}
with $c\in S_{\tau_t}$ a unit, $\Omega\in S_{\tau_t}$ and $\nu^*(u_{\tau_t}\Omega)\ge\nu^*({\rm st}_{S_{\tau_t}}(Q_{\tau_{s+j}}))$ if $s+j>t$ and
\begin{equation}\label{eqN61}
S_{\tau_t}(P_{\sigma_{r+j}})\mbox{ is a unit in $S_{\tau_t}$}
\end{equation}
if $s+j\le t$.
Thus
$P_{\sigma_{r+j}}=u_{\tau_t}^{d_j}\overline\phi_j$ in $S_{\tau_t}$ where $d_j$ is a positive integer and $\overline\phi_j$ is a unit in $S_{\tau_t}$ if $s+j\le t$.

Suppose $s<t$. Then
$$
y_{\sigma_r}={\rm st}_{R_{\sigma_r}}(P_{\sigma_{r+1}})=\tilde\phi u_{\tau_t}^{h}
$$
where $\tilde\phi$ is a unit in $S_{\tau_{t}}$ and $h$ is a positive integer.
As shown in equation (\ref{eq8}) of Section \ref{SecGen},  
$$
R_{\sigma_{r+1}}=R_{\sigma_r}[\overline x_1,\overline y_1]_{m_{\nu}\cap R_{\sigma_r}[\overline x_1,\overline y_1]}
$$
 where
$$
\overline x_1=(x_{\sigma_r}^{b}y_{\sigma_r}^{-a})^{\epsilon},\,\,\, \overline y_1=(x_{\sigma_r}^{-\omega}y_{\sigma_r}^{m_{\sigma_r}})^{\epsilon}
$$
with $\epsilon=m_{\sigma_r}b-\omega a=\pm 1$,
$\nu(\overline x_1)>0$ and $\nu(\overline y_1)=0$. 
Substituting 
$$
x_{\sigma_r}=\phi u_{\tau_t}^{\lambda}\mbox{ and }y_{\sigma_1}=\tilde\phi u_{\tau_t}^h
$$
 we see that
$R_{\sigma_{r+1}}$ is dominated by $S_{\tau_t}$.
We thus have  a factorization
 $$
 R_{\sigma_r}\rightarrow R_{\sigma_{r+1}}\rightarrow S_{\tau_t}
 $$
with $x_{\sigma_{r+1}}=\overline x_1=\hat\phi u_{\tau_t}^{\lambda'}$ where $\hat\phi$ is a unit in $S_{\tau_t}$ and $\lambda'$ is a positive integer. We may thus replace $s$ with $s+1$, $r$ with $r+1$ and $R_{\sigma_r}$ with $R_{\sigma_{r+1}}$. 

Iterating this argument, we may assume that $s=t$ (with $r=r_s$) so that by Lemma \ref{Lemma3}, (\ref{eqN61}) and (\ref{eqN60}),
$$
y_{\sigma_r}={\rm st}_{R_{\sigma_r}}(P_{\sigma_{r+1}})=u_{\tau_s}^{\mu}{\rm st}_{S_{\tau_s}}(P_{\sigma_{r+1}})
$$
where 
$$
{\rm st}_{S_{\tau_s}}(P_{\sigma_{r+1}})=c\,{\rm st}_{S_{\tau_s}}(Q_{\tau_{s+1}})+u_{\tau_s}\Omega
$$
with $c$ a unit in $S_{\tau_s}$  and $\Omega\in S_{\tau_s}$. Thus by (\ref{eq7}), we have an expression
$$
x_{\sigma_r}= \phi u_{\tau_s}^{\lambda},\,\, y_{\sigma_r}=\overline\epsilon u_{\tau_s}^{\alpha}(v_{\tau_s}+u_{\tau_s}\Omega)
$$
where $\lambda$ is a positive integer, $\alpha\in \NN$, $\phi$ and $\overline\epsilon$ are units in $S_{\tau_s}$ and $\Omega\in S_{\tau_s}$.

We have that $\nu^*(x_{\sigma_r})=\lambda\nu^*(u_{\tau_s})$,
$$
\begin{array}{lll}
\nu(x_{\sigma_r})\ZZ&=&G(\nu(x_{\sigma_r}))=G(\beta_{\sigma_0},\ldots,\beta_{\sigma_r})\mbox{ and }\\
\nu^*(u_{\tau_s})\ZZ&=&G(\nu^*(u_{\tau_s}))=G(\gamma_{\tau_0},\ldots,\gamma_{\tau_s}).
\end{array}
$$
Thus 
$$
\lambda=[G(\gamma_{\tau_0},\ldots,\gamma_{\tau_s}):G(\beta_{\sigma_0},\ldots,\beta_{\sigma_r})]=e(\nu^*/\nu)
$$
by Proposition \ref{Prop3}.

By Theorem \ref{birat}, we have that
$$
R_{\sigma_r}/m_{R_{\sigma_r}}=R/m_R[\delta_{\sigma_1},\ldots,\delta_{\sigma_r}]\mbox{ and }S_{\tau_s}/m_{S_{\tau_s}}=S/m_S[\epsilon_{\tau_1},\ldots,\epsilon_{\tau_s}].
$$
Thus
$$
[S_{\tau_s}/m_{S_{\tau_s}}:R_{\sigma_r}/m_{R_{\sigma_r}}]=f(\nu^*/\nu)
$$
by Proposition \ref{Prop3}.

Since the ring $R'$ of Proposition \ref{Prop15} is contained in $R_{\sigma_r}$ by our construction, we have  by Proposition \ref{Prop15} that $(K,\nu)\rightarrow (K^*,\nu^*)$ is without defect, completing the proofs of Proposition \ref{Prop1} and Theorem \ref{Theorem4}.

\section{non splitting and finite generation}

In this section, we will have the following assumptions. Suppose that $R$ is a 2 dimensional excellent local domain with quotient field $K$. Further suppose that $K^*$ is a finite separable extension of $K$ and $S$ is a 2 dimensional local domain with quotient field
$K^*$ such that  $S$ dominates $R$. 
Suppose that $\nu^*$ is a valuation of $K^*$ such that 
 $\nu^*$ dominates $S$. Let $\nu$ be the restriction of $\nu^*$ to $K$. 
 
 Suppose that $\nu^*$ has rational rank 1 and $\nu^*$ is not discrete.  Then $V_{\nu^*}/m_{\nu^*}$ is algebraic over $S/m_S$, by Abhyankar's inequality, Proposition 2 \cite{Ab1}. 
 
 \begin{Lemma} \label{LemmaN1} Let assumptions be as above. Then the associated graded ring ${\rm gr}_{\nu^*}(S)$ is an integral extension of ${\rm gr}_{\nu}(R)$.
 \end{Lemma}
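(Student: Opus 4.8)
The plan is to prove that every homogeneous element of ${\rm gr}_{\nu^*}(S)$ is integral over ${\rm gr}_{\nu}(R)$; since the integral closure of ${\rm gr}_{\nu}(R)$ in ${\rm gr}_{\nu^*}(S)$ is a subring and ${\rm gr}_{\nu^*}(S)$ is generated by its homogeneous elements, this suffices. So I would fix $g\in S$, set $\gamma=\nu^*(g)$ and $z={\rm in}_{\nu^*}(g)\in\mathcal P_{\gamma}(S)/\mathcal P^+_{\gamma}(S)$, and note $\gamma\ge 0$ since $S\subseteq V_{\nu^*}$. If $\gamma=0$ then $g$ is a unit in $S$ and $z$ lies in the degree-zero part $S/m_S$ of ${\rm gr}_{\nu^*}(S)$, which is algebraic over $R/m_R$ (it sits inside $V_{\nu^*}/m_{\nu^*}$, see below), hence integral over $R/m_R\subseteq{\rm gr}_{\nu}(R)$; so assume $\gamma>0$.

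Next I would record two consequences of the hypotheses. Because $\Phi_{\nu}\subseteq\Phi_{\nu^*}$ with $[\Phi_{\nu^*}:\Phi_{\nu}]=e(\nu^*/\nu)<\infty$ and $\nu^*$ has rational rank $1$ and is not discrete, the same holds for $\nu$; by Abhyankar's inequality $V_{\nu}/m_{\nu}$ is algebraic over $R/m_R$, and since $[V_{\nu^*}/m_{\nu^*}:V_{\nu}/m_{\nu}]=f(\nu^*/\nu)<\infty$, the field $V_{\nu^*}/m_{\nu^*}$ is algebraic over $R/m_R$. Also, by Proposition 3.3 of \cite{C11} (equation (\ref{int4})) the extension ${\rm QF}({\rm gr}_{\nu}(R))\to{\rm QF}({\rm gr}_{\nu^*}(S))$ is finite, so the value group of ${\rm QF}({\rm gr}_{\nu^*}(S))$ (which contains $\gamma$) is of finite index over the value group of ${\rm QF}({\rm gr}_{\nu}(R))$; in particular some positive multiple of $\gamma$ lies in the group generated by $S^R(\nu)$.

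The crucial step is then to find $h\in R$ with $\nu(h)=n\gamma$ for some $n\ge 1$, i.e.\ to push a positive multiple of $\gamma$ from the group generated by $S^R(\nu)$ into $S^R(\nu)$ itself. This is exactly where rational rank $1$ is used: the group generated by finitely many values of elements of $R$ is infinite cyclic, so the corresponding subsemigroup of $S^R(\nu)$ is, after rescaling, a numerical semigroup and therefore contains all sufficiently large elements of that cyclic group. For regular $R$ this is made precise via the generating sequence $P_i(\nu,R)$ of Theorem \ref{Theorem1*} together with the description $S^R(\nu)=S(\beta_{\sigma_0(\nu,R)}(\nu,R),\beta_{\sigma_1(\nu,R)}(\nu,R),\ldots)$, applied to a $G(\beta_0(\nu,R),\ldots,\beta_{N}(\nu,R))$ containing a multiple of $\gamma$. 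I expect this to be the main obstacle, in particular carrying the semigroup argument out for an arbitrary two dimensional excellent local domain $R$ rather than just a regular one (reducing to the regular case, or arguing directly with the rank-one group structure of $S^R(\nu)$).

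Granting such an $h$, set $\zeta:=z^n/{\rm in}_{\nu}(h)$, a homogeneous element of degree $0$ of ${\rm QF}({\rm gr}_{\nu^*}(S))$. Such a ratio ${\rm in}_{\nu^*}(a)/{\rm in}_{\nu^*}(b)$ with $\nu^*(a)=\nu^*(b)$ equals the class in $V_{\nu^*}/m_{\nu^*}$ of the unit $a/b$ of $V_{\nu^*}$, so $\zeta\in V_{\nu^*}/m_{\nu^*}$, which by the second paragraph is algebraic over $R/m_R$. Thus $\zeta$ satisfies a monic equation $\zeta^{t}+c_{t-1}\zeta^{t-1}+\cdots+c_0=0$ with $c_i\in R/m_R\subseteq{\rm gr}_{\nu}(R)$. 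Multiplying by ${\rm in}_{\nu}(h)^t$ and substituting $\zeta=z^n/{\rm in}_{\nu}(h)$ gives
\[
z^{nt}+c_{t-1}\,{\rm in}_{\nu}(h)\,z^{n(t-1)}+c_{t-2}\,{\rm in}_{\nu}(h)^2\,z^{n(t-2)}+\cdots+c_0\,{\rm in}_{\nu}(h)^t=0,
\]
a monic polynomial relation for $z$ with coefficients in $R/m_R[{\rm in}_{\nu}(h)]\subseteq{\rm gr}_{\nu}(R)$. Hence $z$ is integral over ${\rm gr}_{\nu}(R)$, and the lemma follows.
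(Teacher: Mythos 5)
Your overall route is the same as the paper's: reduce to a homogeneous element ${\rm in}_{\nu^*}(f)$, divide a suitable power of it by an initial form coming from $R$ to land in degree zero, identify that degree-zero element with a residue in $V_{\nu^*}/m_{\nu^*}$, invoke algebraicity of $V_{\nu^*}/m_{\nu^*}$ over $R/m_R$, and clear denominators in the minimal polynomial to obtain a monic equation over ${\rm gr}_{\nu}(R)$. That part of your argument is correct and matches the paper's final computation.

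The gap is the step you explicitly leave open (``Granting such an $h$\dots''): producing $h\in R$ with $\nu(h)=n\gamma$ for some $n\ge 1$. You call this the main obstacle and sketch an attack via generating sequences and numerical semigroups, worrying that it only works for regular $R$. In fact this step is trivial and needs none of that machinery. Pick any $x\in m_R$ and set $\omega=\nu(x)>0$. Since $[\Phi_{\nu^*}:\Phi_{\nu}]\le [K^*:K]<\infty$, there is $n_1>0$ with $n_1\gamma\in\Phi_{\nu}$; since $\Phi_{\nu^*}$ has rational rank $1$, any two positive elements of it are commensurable, so $bn_1\gamma=a\omega$ for some positive integers $a,b$. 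Then $h=x^a$ works with $n=bn_1$: a power of a single element of $m_R$ already realizes a multiple of $\gamma$ as a value of an element of $R$, so no knowledge of the structure of $S^R(\nu)$ is required, and regularity of $R$ plays no role. (Your parenthetical alternative, ``arguing directly with the rank-one group structure,'' is exactly the right move; the numerical-semigroup detour is unnecessary.) With this substitution your proof closes up and is essentially the proof in the paper, which writes the resulting relation $\nu^*\bigl(f^{bn_1r}+a_{r-1}x^af^{bn_1(r-1)}+\cdots+a_0x^{ar}\bigr)>\nu^*(f^{bn_1r})$ directly in $S$ and then passes to initial forms.
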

 
 \begin{proof}  It suffices to show that ${\rm in}_{\nu^*}(f)$ is integral over ${\rm gr}_{\nu}(R)$ whenever $f\in S$. Suppose that 
 $f\in S$. There exists $n_1>0$ such that $n_1\nu^*(f)\in \Phi_{\nu}$. Let $x\in m_R$ and $\omega=\nu(x)$. Then there exists a  positive integer  $b$ and natural number $a$ such that $bn_1\nu^*(f)=a\omega$, so
 $$
 \nu^*\left(\frac{f^{bn_1}}{x^a}\right)=0.
 $$
 Let 
 $$
 \xi=\left[\frac{f^{bn_1}}{x^a}\right]\in V_{\nu^*}/m_{\nu^*},
 $$
 and let $g(t)=t^r+\overline a_{r-1}t^{r-1}+\cdots+\overline a_0$ with $\overline a_i\in R/m_R$ be the minimal polynomial of $\xi$ over $R/m_R$. Let $a_i$ be lifts of the $\overline a_i$ to $R$.
 Then
 $$
 \nu^*(f^{b_1n_1r}+a_{r-1}x^af^{bn_1(r-1)}+\cdots+a_0x^{ar})>\nu^*(f^{bn_1r})=\nu^*(a_{r-1}x^af^{bn_1(r-1)})=\cdots=\nu^*(a_0x^{ar}).
 $$
 Thus 
 $$
 {\rm in}_{\nu^*}(f)^{b_1n_1r}+{\rm in}_{\nu}(a_{r-1}x^a){\rm in }_{\nu^*}(f)^{bn_1(r-1)}+\cdots+{\rm in}_{\nu}(a_0x^{ar})=0
 $$
 in ${\rm gr}_{\nu^*}(S)$. Thus ${\rm in}_{\nu^*}(f)$ is integral over ${\rm gr}_{\nu^*}(R)$.
 \end{proof}
 
 We now establish Theorem \ref{ThmN2}. Recall (as defined after Proposition \ref{Prop1}) that $\nu^*$ does not split in $S$ if $\nu^*$ is the unique extension of $\nu$ to $K^*$ which dominates $S$.
 
 \begin{Theorem} Let assumptions be as above and suppose that $R$ and $S$ are regular local rings. Suppose that ${\rm gr}_{\nu^*}(S)$ is a finitely generated ${\rm gr}_{\nu}(R)$-algebra. Then $S$ is a localization of the integral closure of $R$ in $K^*$, the defect $\delta(\nu^*/\nu)=0$ and $\nu^*$ does not split in $S$.  \end{Theorem}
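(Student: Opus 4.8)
The plan is the following. The vanishing $\delta(\nu^*/\nu)=0$ is immediate: apply Proposition \ref{Prop1} with $R_1=R$ and $S_1=S$, which is legitimate since $R$ and $S$, being regular, are local rings of (trivial) blow ups of $R$ and $S$. So one is left to show that $S$ is a localization of the integral closure of $R$ in $K^*$ and that $\nu^*$ does not split in $S$. First I would record the relevant general facts. Let $T$ be the integral closure of $R$ in $K^*$; since $R$ is excellent and $K^*/K$ is finite, $T$ is a module-finite $R$-algebra, and it is normal. As $S$ is regular, hence integrally closed in $K^*={\rm QF}(S)$, and $R\subseteq S$, every element of $T$ (being integral over $S$) lies in $S$, so $T\subseteq S$. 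Put $\mathfrak q=m_S\cap T$; because $T/R$ is integral, $\mathfrak q$ is a maximal ideal of $T$, it has height $2$, and $\mathfrak q=m_{\nu^*}\cap T$, so $T_{\mathfrak q}\subseteq S$, $T_{\mathfrak q}$ dominates $R$, and $\nu^*$ dominates $T_{\mathfrak q}$. Inverting the elements of $T\setminus\mathfrak q$ does not change the graded ring $\bigoplus_{\gamma}\mathcal P_{\gamma}(T)/\mathcal P_{\gamma}^+(T)$ (its degree zero component is already the field $T/\mathfrak q$), so ${\rm gr}_{\nu^*}(T_{\mathfrak q})=\bigoplus_{\gamma}\mathcal P_{\gamma}(T)/\mathcal P_{\gamma}^+(T)$ and we have graded inclusions ${\rm gr}_{\nu}(R)\subseteq{\rm gr}_{\nu^*}(T_{\mathfrak q})\subseteq{\rm gr}_{\nu^*}(S)$. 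By Lemma \ref{LemmaN1} together with the finite generation hypothesis, ${\rm gr}_{\nu^*}(S)$ is a finite ${\rm gr}_{\nu}(R)$-module, hence a finite ${\rm gr}_{\nu^*}(T_{\mathfrak q})$-module; and by Proposition 3.3 of \cite{C11} (formula (\ref{int4})) both ${\rm gr}_{\nu^*}(T_{\mathfrak q})$ and ${\rm gr}_{\nu^*}(S)$ have quotient field of degree $e(\nu^*/\nu)f(\nu^*/\nu)$ over ${\rm QF}({\rm gr}_{\nu}(R))$, so ${\rm QF}({\rm gr}_{\nu^*}(T_{\mathfrak q}))={\rm QF}({\rm gr}_{\nu^*}(S))$.

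To obtain non splitting I would invoke the monomialization already carried out in Section 3. The hypotheses of Proposition \ref{Prop15} hold here (by Abhyankar's inequality $V_{\nu^*}/m_{\nu^*}$ is algebraic over $S/m_S$), so the construction of Section 3 applies and produces sequences of quadratic transforms $R\to R_{\sigma_r}$ along $\nu$ and $S\to S_{\tau_s}$ along $\nu^*$ with $R_{\sigma_r}\subseteq S_{\tau_s}$ and regular parameters satisfying
$$
x_{\sigma_r}=\phi\, u_{\tau_s}^{\lambda},\qquad y_{\sigma_r}=\overline{\epsilon}\, u_{\tau_s}^{\alpha}\,(v_{\tau_s}+u_{\tau_s}\Omega),
$$
with $\lambda=e(\nu^*/\nu)$, $[S_{\tau_s}/m_{S_{\tau_s}}:R_{\sigma_r}/m_{R_{\sigma_r}}]=f(\nu^*/\nu)$ and $\delta(\nu^*/\nu)=0$. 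Because $K^*/K$ is separable and this prepared monomial extension is defectless, it realizes the full field degree: by the monomialization theory of \cite{CP} and \cite{C12} one gets $[K^*:K]=\lambda\cdot f(\nu^*/\nu)=e(\nu^*/\nu)f(\nu^*/\nu)$ (separability is used essentially here; cf. the inseparable defect example in the introduction, where $[K^*:K]=p$ while $e(\nu^*/\nu)f(\nu^*/\nu)=1$). Writing $\nu_1=\nu^*,\dots,\nu_g$ for the extensions of $\nu$ to $K^*$, the fundamental equality for the separable extension $K^*/K$ gives $[K^*:K]=\sum_{i=1}^{g}e(\nu_i/\nu)f(\nu_i/\nu)p^{\delta(\nu_i/\nu)}\ge e(\nu^*/\nu)f(\nu^*/\nu)p^{\delta(\nu^*/\nu)}+(g-1)=e(\nu^*/\nu)f(\nu^*/\nu)+(g-1)$, which forces $g=1$. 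Thus $\nu^*$ is the unique extension of $\nu$ to $K^*$, and in particular $\nu^*$ does not split in $S$.

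The remaining assertion, that $S$ is a localization of the integral closure of $R$ in $K^*$, amounts (by the reductions of the first paragraph) to showing $S=T_{\mathfrak q}$, i.e. that no nontrivial quadratic transformation along $\nu^*$ intervenes between $T_{\mathfrak q}$ and $S$. The plan is to descend the monomial picture above through the two towers $R\to R_{\sigma_r}$ and $S\to S_{\tau_s}$, using that the strict transforms of the generators $P_{\sigma_i}$ are integral over $R$ and that, by Proposition \ref{Prop3}, the generating sequences of $R$ and of $S$ eventually coincide, so as to bring it down to $R\to S$ itself; this should also yield that $S$ is essentially of finite type over $R$. The hard part will be precisely this descent — ruling out that $S$ properly dominates $T_{\mathfrak q}$. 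Phrased on associated graded rings, one must show that a nontrivial quadratic transform $T_{\mathfrak q}\to W$ along $\nu^*$ cannot keep ${\rm gr}_{\nu^*}(W)$ a finite module over ${\rm gr}_{\nu^*}(T_{\mathfrak q})$ (equivalently, over ${\rm gr}_{\nu}(R)$, these having the same quotient field). This is exactly where the hypotheses that $\nu^*$ has rational rank $1$ and is not discrete are indispensable: such a transform introduces either a degree zero element whose residue is transcendental over $T_{\mathfrak q}/m_{T_{\mathfrak q}}$, or a strictly larger value group, and the non-discreteness prevents the value data from stabilizing, so finiteness is destroyed; for discrete or rational-rank-two valuations this rigidity fails, in accordance with the counterexamples announced after the statement.
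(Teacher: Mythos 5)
Your reduction of the defect statement to Proposition \ref{Prop1} (taking $R_1=R$, $S_1=S$) is correct and is exactly how the paper disposes of that part. The other two assertions are where the problems lie. For non splitting, the step ``$[K^*:K]=\lambda\cdot f(\nu^*/\nu)=e(\nu^*/\nu)f(\nu^*/\nu)$'' is unjustified and false in general. What the monomialization (Proposition \ref{Prop15}) computes is the \emph{local} degree $ad[S_1/m_{S_1}:R_1/m_{R_1}]=e(\nu^*/\nu)f(\nu^*/\nu)p^{\delta(\nu^*/\nu)}$ at the single point of the normalization of $R_{\sigma_r}$ in $K^*$ lying on the center of $\nu^*$; the global degree $[K^*:K]$ is the sum of such local degrees over \emph{all} maximal ideals of that normalization and can be strictly larger. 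Worse, the conclusion $g=1$ you extract is strictly stronger than what the theorem asserts (which only excludes other extensions of $\nu$ \emph{dominating $S$}) and is actually false under the hypotheses: if the integral closure $T$ of $R$ in $K^*$ has several maximal ideals over $m_R$ (e.g.\ an extension \'etale over $m_R$), then $\nu$ has extensions centered at each of them while ${\rm gr}_{\nu^*}(S)$ can perfectly well equal ${\rm gr}_{\nu}(R)$. So the argument cannot be repaired by tightening the bookkeeping; and restricting it to $S_{\tau_t}$ would only give non splitting in $S_{\tau_t}$, which is automatic after enough blowing up (Lemma \ref{LemmaN66}) and says nothing about $S$ itself.

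For the assertion that $S$ is a localization of $T$, you offer a plan rather than a proof, and the plan does not contain the mechanism that the paper's argument turns on. The missing idea is the analytic irreducibility of the elements $P_{\sigma_{r+j}}$, $j>0$, in $\hat S$: using the unique (immediate) extension of $\nu^*$ to $\hat S$, a factorization $P_{\sigma_{r+j}}=fg$ with $f,g\in m_{\hat S}$ would force ${\rm in}_{\nu^*}(P_{\sigma_{r+j}})$, hence by (\ref{eqN3}) also ${\rm in}_{\nu^*}(Q_{\tau_{s+j}})$, into $S/m_S[{\rm in}_{\nu^*}(Q_{\tau_0}),\ldots,{\rm in}_{\nu^*}(Q_{\tau_{s+j-1}})]$, which is impossible; here the identification $\beta_{\sigma_{r+j}}=\gamma_{\tau_{s+j}}$ from Proposition \ref{Prop3} is essential. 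Granting this, Zariski's Main Theorem finishes: if $S$ were not a localization of $T$, then $m_RS=fN$ with $f\in m_S$ and $N$ an $m_S$-primary ideal, so $f$ would divide every $P_{\sigma_{r+j}}$ in $S$, contradicting their analytic irreducibility together with the fact that their values are pairwise distinct. The same analytic irreducibility, fed into a Henselian argument on the strict transform of the curve $P_{\sigma_i}=0$ under the blow up connecting $S$ to the normalization of $R_{\sigma_i}$ in $K^*$, is what actually yields non splitting in $S$; some such passage to the completion seems unavoidable, and your closing remarks about transcendental residues or growth of the value group do not engage with it.
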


 \begin{proof} Let $s$ and $r$ be as in the conclusions of Proposition \ref{Prop3}. We will first show that $P_{\sigma_{r+j}}$ is irreducible in $\hat S$ for all $j>0$. There exists a unique extension of $\nu^*$ to the quotient field of $\hat S$ which dominates $\hat S$ (\cite{Sp}, \cite{CV1}, \cite{GAST}).  The extension is immediate since $\nu^*$ is not discrete; that is, there is no increase in value group or residue field for the extended valuation. It has the property that if $f\in \hat S$ and $\{f_i\}$ is a a Cauchy sequence in $\hat S$ which converges to $f$, then $\nu^*(f)=\nu^*(f_i)$ for all $i\gg 0$. 
 
 Suppose that $P_{\sigma_{r+j}}$ is not irreducible in $\hat S$ for some $j>0$. We will derive a contradiction. With this assumption,  $P_{\sigma_{r+j}}=fg$ with $f,g\in m_{\hat S}$. Let $\{f_i\}$ be a Cauchy sequence in $S$ which converges to $f$ and let $\{g_i\}$ be a Cauchy sequence in $S$ which converges to $g$. For $i$ sufficiently large, $f-f_i,g-g_i\in m_{\hat S}^n$ where $n$ is so large that $n\nu^*(m_{\hat S})=n\nu^*(m_S)>\nu(P_{\sigma_{r+j}})$.
 Thus $P_{\sigma_{r+j}}=f_ig_i+h$ with $h\in m_{\hat S}^n\cap S=m_S^n$, and so ${\rm in}_{\nu^*}(P_{\sigma_{r+j}})={\rm in}_{\nu^*}(f_i){\rm in}_{\nu^*}(g_i)$. Now
 $$
 \nu^*(f_i),\nu^*(g_i)<\nu(P_{\sigma_{r+j}})=\beta_{\sigma_{r+j}}=\gamma_{\tau_{s+j}}=\nu^*(Q_{\tau_{s+j}})
 $$
 so that 
 $$
 {\rm in}_{\nu^*}(f_i),{\rm in}_{\nu^*}(g_i)\in S/m_S[{\rm in }_{\nu^*}(Q_{\tau_0}),\ldots, {\rm in}_{\nu^*}(Q_{\tau_{s+j-1}})]
 $$
 which  implies 
 $$
 {\rm in}_{\nu^*}(P_{\sigma_{r+j}})\in S/m_S[{\rm in }_{\nu^*}(Q_{\tau_0}),\ldots, {\rm in}_{\nu^*}(Q_{\tau_{s+j-1}})].
 $$ 
 But then (\ref{eqN3}) implies 
 $$
 {\rm in}_{\nu^*}(Q_{\tau_{s+j}})\in S/m_S[{\rm in }_{\nu^*}(Q_{\tau_0}),\ldots, {\rm in}_{\nu^*}(Q_{\tau_{s+j-1}})]
 $$  
 which is impossible. Thus $P_{\sigma_{r+j}}$ is irreducible in $\hat S$ for all $j>0$.
 
 If $S$ is not a localization of the integral closure of $R$ in $K^*$, then by Zariski's Main Theorem (Theorem 1 of Chapter 4 \cite{R}), $m_RS=fN$ where $f\in m_S$ and $N$ is an $m_S$-primary ideal. Thus $f$ divides $P_i$ in $S$ for all $i$, which is impossible since we have shown that $P_{\sigma_{r+j}}$ is analytically irreducible in $S$ for all $j>0$; we cannot have $P_{\sigma_{r+j}}=a_jf$ where $a_j$ is a unit in $S$ for $j>0$  since 
 $\nu(P_{\sigma_{r+j}})=\nu^*(Q_{\tau_{s+j}})$ by Proposition \ref{Prop3}.

 Now suppose that $\nu^*$ is not the unique extension of $\nu$ to $K^*$ which dominates $S$.   Recall that $V_{\nu}$ is the union of all quadratic transforms above $R$ along $\nu$ and $V_{\nu^*}$ is the union of all quadratic transforms above $S$ along $\nu^*$ (Lemma 4.5 \cite{RTM}).
 
 Then for all $i\gg 0$, we have a commutative diagram
 $$
 \begin{array}{lll}
 R_{\sigma_i}&\rightarrow& T_i\\
 \uparrow&&\uparrow\\
 R&\rightarrow &T
 \end{array}
 $$
 where $T$ is the integral closure of $R$ in $K^*$, $T_i$ is the integral closure of $R_{\sigma_i}$ in $K^*$, $S=T_{\mathfrak p}$ for some maximal ideal $\mathfrak p$ in $T$ which lies over $m_R$,
 and there exist $r\ge 2$ prime ideals $\mathfrak p_1(i),\ldots,\mathfrak p_r(i)$ in $T_i$ which lie over $m_{R_{\sigma_i}}$ and whose intersection with $T$ is $\mathfrak p$. We may assume that $\mathfrak p_1(i)$ is the center of $\nu^*$. 
 
 There exists an $m_R$-primary ideal $I_i$ in $R$ such that the blow up of $I_i$ is $\gamma:X_{\sigma_i}\rightarrow \mbox{Spec}(R)$ where $X_{\sigma_i}$ is regular and $R_{\sigma_i}$ is a local ring of $X_{\sigma_i}$. Let $Z_{\sigma_i}$ be the integral closure of $X_{\sigma_i}$ in $K^*$. Let $Y_{\sigma_i}=Z_{\sigma_i}\times_{\mbox{Spec}(T)}\mbox{Spec}(S)$.
 We have a commutative diagram  of morphisms
 $$
 \begin{array}{lll}
 Y_{\sigma_i}&\stackrel{\beta}{\rightarrow}&X_{\sigma_i}\\
 \delta\downarrow&&\gamma\downarrow\\
 \mbox{Spec}(S)&\stackrel{\alpha}{\rightarrow}&\mbox{Spec}(R)
 \end{array}
 $$
 The morphism $\delta$ is projective (by Proposition II.5.5.5 \cite{EGAII} and Corollary II.6.1.11 \cite{EGAII} and it is birational, so since $Y_{\sigma_i}$ and $\mbox{Spec}(S)$ are integral, it is a blow up of an ideal $J_i$ in $S$ (Proposition III.2.3.5 \cite{EGAIII}), which we can take to be $m_S$-primary since $S$ is a regular local ring and hence factorial. Define curves $C=\mbox{Spec}(R/(P_{\sigma_i}))$ and $C'=\alpha^{-1}(C)=\mbox{Spec}(S/(P_{\sigma_i}))$. Denote the Zariski closure of a set $W$ by $\overline W$. The strict transform $C^*$ of $C'$ in $Y_{\sigma_i}$ is the Zariski closure
  \begin{equation}\label{eqN21}
  \begin{array}{lll}
  C^*&=&\overline{\delta^{-1}(C'\setminus m_S)}=\overline{\delta^{-1}\alpha^{-1}(C\setminus m_R)}
  =\overline{\beta^{-1}\gamma^{-1}(C\setminus m_R)}\\
  &=&\beta^{-1}(\overline{\gamma^{-1}(C\setminus m_R)})\mbox{ since $\beta$ is quasi finite}\\
  &=& \beta^{-1}(\tilde C)
  \end{array}
  \end{equation}
 where $\tilde C$ is the strict transform of $C$ in $X_{\sigma_i}$.  We have that $Z_{\sigma_i}\times_{X_{\sigma_i}}\mbox{Spec}(R_{\sigma_i})\cong\mbox{Spec}(T_i)$, so
 $$
 Y_{\sigma_i}\times_{X_{\sigma_i}}\mbox{Spec}(R_{\sigma_i})\cong \mbox{Spec}(T_i\otimes_TS).
 $$
 Let $x_{\sigma_i}$ be a local equation in $R_{\sigma_i}$ of the exceptional divisor of $\mbox{Spec}(R_{\sigma_i})\rightarrow \mbox{Spec}(R)$ and let $y_{\sigma_i}=\mbox{st}_{R_{\sigma_i}}(P_{\sigma_i})$. Then 
 $x_{\sigma_i},y_{\sigma_i}$ are regular parameters in $R_{\sigma_i}$. We have that
 $$
 \sqrt{m_{R_{\sigma_i}}(T_i\otimes_TS)}=\cap_{j=1}^r\mathfrak p_j(i)(T_i\otimes_TS).
 $$
 The blow up of $J_i(S/(P_{\sigma_i}))$ in $C'$ is $\overline\delta:C^*\rightarrow C'$, where $\overline\delta$ is the restriction of $\delta$ to $C^*$  Corollary II.7.15 \cite{H}). Since $y_{\sigma_i}$ is a local equation of $\tilde C$ in $R_{\sigma_i}$, we have by (\ref{eqN21})  that 
 $$
 \mathfrak p_1(i),\ldots,\mathfrak p_r(i)\in \overline\delta^{-1}(m_S)\subset C^*.
 $$
 Since $\overline\delta$ is proper and $C'$ is a curve, $C^*=\mbox{Spec}(A)$ for some excellent one dimensional  domain $A$ such that the inclusion $S/(P_{\sigma_i})\rightarrow A$ is finite (Corollary I.1.10 \cite{Mi}).
 Let $B=A\otimes_{S/(P_{\sigma_i})}\hat S/(P_{\sigma_i})$. Then
 $$
 C^*\times_{\mbox{Spec}(S/(P_{\sigma_i}))}\mbox{Spec}(\hat S/(P_{\sigma_i}))=\mbox{Spec}(B)\rightarrow \mbox{Spec}(\hat S/(P_{\sigma_i}))
 $$
 is the blow up of $J_i(\hat S/(P_{\sigma_i}))$ in $\hat S/(P_{\sigma_i})$.  The extension $\hat S/(P_{\sigma_i})\rightarrow B$ is finite since  $S/(P_{\sigma_i})\rightarrow A$ is finite.

Now assume that $S/(P_{\sigma_i})$ is analytically irreducible. Then $B$ has only one minimal prime since the blow up $\mbox{Spec}(B)\rightarrow \mbox{Spec}(\hat S/(P_{\sigma_i}))$ is birational. 
 
 Since a complete local ring is Henselian, $B$ is a local ring (Theorem I.4.2 on page 32 of \cite{Mi}), a contradiction to our assumption that $r>1$.
 \end{proof}
 
 As a consequence of the above theorem (Theorem \ref{ThmN2}), we now obtain Corollary \ref{CorN32}.
 
 \begin{Corollary} Let assumptions be as above and suppose that $R$ is a regular local ring. Suppose that $R\rightarrow R'$ is a nontrivial sequence of quadratic transforms along $\nu$. Then
 $\mbox{gr}_{\nu}(R')$ is not a finitely generated $\mbox{gr}_{\nu}(R)$-algebra.
 \end{Corollary}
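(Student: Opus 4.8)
The plan is to deduce this immediately from Theorem \ref{ThmN2} applied to the \emph{trivial} field extension. Concretely, I would take $K^*=K$, $S=R'$ and $\nu^*=\nu$, and check that the hypotheses of Theorem \ref{ThmN2} are met: $K^*$ is (trivially) a finite separable extension of $K$; $S=R'$ is a $2$-dimensional regular local ring with quotient field $K$ which dominates $R$, since a quadratic transform of a two dimensional regular local ring is again a two dimensional regular local ring, and a sequence of quadratic transforms along $\nu$ produces a ring dominated by $\nu$ and dominating $R$; and $\nu^*=\nu$ dominates $S=R'$ and restricts to $\nu$ on $K$, with $\nu^*$ of rational rank $1$ and not discrete by hypothesis. (The standing condition that $V_{\nu^*}/m_{\nu^*}$ be algebraic over $S/m_S$ is automatic by Abhyankar's inequality, exactly as in Section \ref{SecLocDeg}.)

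Next I would argue by contradiction. Suppose $\mathrm{gr}_{\nu}(R')$ is a finitely generated $\mathrm{gr}_{\nu}(R)$-algebra. Then Theorem \ref{ThmN2} applies and gives, among other things, that $S=R'$ is a localization of the integral closure of $R$ in $K^*=K$. But $R$ is a regular local ring, hence normal, hence integrally closed in its quotient field $K$; so the integral closure of $R$ in $K$ is $R$ itself, and we conclude that $R'=R_{\mathfrak p}$ for some prime ideal $\mathfrak p$ of $R$.

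Finally I would derive the contradiction from the nontriviality of the sequence. Since $R\rightarrow R'$ is a sequence of quadratic transforms along $\nu$, the ring $R'$ dominates $R$, so $m_{R'}\cap R=m_R$; for $R'=R_{\mathfrak p}$ this intersection is exactly $\mathfrak p$, forcing $\mathfrak p=m_R$ and hence $R'=R$. This contradicts the assumption that $R\rightarrow R'$ is nontrivial (a single quadratic transform of a two dimensional regular local ring with regular parameters $u,v$ is already a proper extension, since $v/u\notin R$). Therefore $\mathrm{gr}_{\nu}(R')$ is not a finitely generated $\mathrm{gr}_{\nu}(R)$-algebra.

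There is essentially no obstacle beyond Theorem \ref{ThmN2} itself: all of the real content (finite generation forcing no defect, no splitting, and $S$ being a localization of the integral closure) is packaged there. The only points requiring a moment's care are that Theorem \ref{ThmN2} is not vacuous for the degenerate extension $K^*=K$, and that for a regular $R$ the conclusion "$S$ is a localization of the integral closure of $R$ in $K^*$" collapses to "$R'$ is a localization of $R$," which, for a ring dominating $R$, can only be $R$ itself.
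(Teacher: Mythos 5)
Your proposal is correct and is exactly the deduction the paper intends: apply Theorem \ref{ThmN2} to the trivial extension $K^*=K$, $S=R'$, $\nu^*=\nu$, use that the integral closure of the normal ring $R$ in $K$ is $R$ itself, and conclude from domination that $R'=R$, contradicting nontriviality. The paper states the corollary as an immediate consequence of Theorem \ref{ThmN2} without writing out these steps, and your write-up supplies precisely that argument.
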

 
 The conclusions of Theorem \ref{ThmN2} do not hold if we remove the assumption that $\nu^*$ is not discrete,  when $V_{\nu}/m_{\nu}$ is finite over  $R/m_R$. We give a  simple example. Let $k$ be an algebraically closed field of characteristic not equal to 2 and let $p(u)$ be a transcendental series in the power series ring $k[[u]]$ such that $p(0)=1$.  Then $f=v-up(u)$ is irreducible in the power series ring $k[[u,v]]$ and $k[[u,v]]/(f)$ is a discrete valuation ring with regular parameter $u$. Let $\nu$ be the natural valuation of this ring. 
 Let $R=k[u,v]_{(u,v)}$ and $S=k[x,y]_{(x,y)}$. Define a $k$-algebra homomorphism $R\rightarrow S$ by $u\mapsto x^2$ and $v\mapsto y^2$. The series
 $f(x^2,y^2)$ factors as $f=(y-x\sqrt{p(x^2)})(y+x\sqrt{p(x^2)})$ in $k[[x,y]]$. Let $f_1=y-x\sqrt{p(x^2)}$ and $f_2=y+x\sqrt{p(x^2)}$. The rings $k[[x,y]]/(f_i)$ are discrete valuation rings with regular parameter $x$. 
 Let $\nu_1$ and $\nu_2$ be the natural valuations of these ring. 
 
  Let $\nu$ be the valuation of the quotient field of $R$ which dominates $R$ defined by the natural inclusion $R\rightarrow k[[u,v]]/(f)$ and let $\nu_i$ for $i=1,2$  be the valuations of the quotient field of $S$ which dominate $S$ and are defined by the respective natural inclusions $S\rightarrow k[[x,y]]/(f_i)$ . Then $\nu_1$ and $\nu_2$ are distinct extensions of $\nu$ to the quotient field of $S$ which dominate $S$. However,
  we have that 
  $ {\rm gr}_{\nu}(R)=k[{\rm in}_{\nu}(u)]$ and ${\rm gr}_{\nu_i}(S)=k[{\rm in}_{\nu^*}(x)]$ with ${\rm in}_{\nu^*}(x)^2={\rm in}_{\nu}(u)$. Thus ${\rm gr}_{\nu_i}(S)$ is a finite  $ {\rm gr}_{\nu}(R)$-algebra.

 We now give an example where $\nu^*$ has rational rank 2 and  $\nu$  splits in $S$ but ${\rm gr}_{\nu^*}(S)$ is a finitely generated ${\rm gr}_{\nu}(R)$-algebra. Suppose that $k$ is an algebraically closed field of characteristic not equal to 2.
  Let $R=k[x,y]_{(x,y)}$ and $S=k[u,v]_{(u,v)}$.  The substitutions $u=x^2$ and $v=y^2$ make $S$ into a finite separable extension of $R$. 
  %Let $R_1=k[u,\frac{v}{u}]_{(u,\frac{v}{u})}$,
 % $S_1=k[x,\frac{y}{x}]_{(x,\frac{y}{x}-1)}$ and  $S_2=k[x,\frac{y}{x}]_{(x,\frac{y}{x}+1)}$. Then $S_1$ and $S_2$ both dominate $R_1$, and they both dominate $S$. Thus if $\nu$ is a valuation of the quotient field $K$ of  $R$ which dominates $R_1$ then $\nu$ splits in  $S$.
  Define a valuation $\nu_1$ of the quotient field $K^*$ of $S$ by $\nu_1(x)=1$ and $\nu_1(y-x)=\pi+1$ and define a valuation $\nu_2$ of the quotient field $K^*$ by $\nu_2(x)=1$ and $\nu_2(y+x)=\pi+1$.
Since $u=x^2$ and $v-u=(y-x)(y+x)$, we have that $\nu_1(u)=\nu_2(u)=2$ and $\nu_1(v-u)=\nu_2(v-u)=\pi+2$. Let $\nu$ be the common restriction of $\nu_1$ and $\nu_2$ to the quotient field $K$ of $R$.
Then $\nu$ splits in $S$. However, ${\rm gr}_{\nu_1}(S)$ is a finitely generated ${\rm gr}_{\nu}(R)$-algebra since 
${\rm gr}_{\nu_1}(S)=k[{\rm in}_{\nu_1}(x),{\rm in }_{\nu_1}(y-x)]$ is a finitely generated $k$-algebra.  Note that  ${\rm gr}_{\nu}(R)=k[{\rm in}_{\nu}(u),{\rm in }_{\nu}(v-u)]$ with 
${\rm in}_{\nu_1}(x)^2={\rm in}_{\nu}(u)$ and ${\rm in}_{\nu}(v-u)=2{\rm in}_{\nu_1}(y-x){\rm in}_{\nu_1}(x)$.


\begin{thebibliography}{1000000000}
\bibitem{Ab1} S. Abhyankar, On the valuations centered in a local domain, Amer. J. Math. 78 (1956), 321 - 348.
\bibitem{LU} S. Abhyankar, Local uniformization of algebraic surfaces over ground fields of characteristic $p\ne 0$, Annals of Math. 63 (1956), 491 -526.
\bibitem{RTM} S. Abhyankar, Ramification theoretic methods in algebraic geometry, Princeton Univ Press, 1959.
\bibitem{RES} S. Abhyankar, Resolution of singularities of embedded algebraic surfaces, second edition, Springer Verlag, New York, Berlin, Heidelberg, 1998. 
\bibitem{BeV} A. Benito, O. Villamayor U., Techniques for the study of singularities with application to resolution of 2-dim schemes, Math. Ann. 353 (2012), 1937 - 1068.
\bibitem{BrV} A. Bravo and O. Villamayor U., Singularities in positive characteristic, Stratification and simplification of the singular locus, Advances in Math. 224, (2010), 1349 - 1418.
\bibitem{CJS} V. Cossart, U. Jannsen and S. Saito, Canonical embedded and non-embedded resolution of singularities for excellent two-dimensional schemes, arXiv:0905.2191
\bibitem{CP1} V. Cossart and O.  Piltant,   Resolution of singularities of threefolds in positive characteristic I, Reduction to local uniformization on Artin-Schreier and purely inseparable coverings, J. Algebra 320 (2008), 1051 - 1082.
\bibitem{CP2} V. Cossart, and O. Piltant,  Resolution of singularities of threefolds in positive characteristic II, J. Algebra 321 (2009), 1836 - 1976.
\bibitem{C} S.D. Cutkosky, Local factorization and monomialization of morphisms, Ast\'erisque
260, 1999.
\bibitem{C1} S.D. Cutkosky, Resolution of Singularities for 3-folds in positive characteristic, Amer. J. Math. 131 (2009), 59 - 127.
\bibitem{C2} S.D. Cutkosky, Counterexamples to local monomialization in positive characteristic, Math. Annalen 362 (2015), 321 - 334.
\bibitem{C11} S.D. Cutkosky, A generalization of the Abhyankar Jung Theorem to associated graded rings of valuations, to appear in  Proc. Cambridge Phil.Soc.
\bibitem{C12} S.D. Cutkosky, Ramification of valuations and local rings in positive characteristic,  Communications in Algebra 44 (2016), 2828 - 2866. 
\bibitem{CG} S.D. Cutkosky and L. Ghezzi,  Completions of valuation rings,
Contemp. math. 386 (2005), 13 - 34.
\bibitem{CP} S.D. Cutkosky and O. Piltant, Ramification of Valuations, Advances in Math. 183 (2004), 1-79.
\bibitem{CV1} S.D. Cutkosky and Pham An Vinh, Valuation semigroups of two dimensional  local rings, Proceedings of the London Mathematical Society 108 (2014), 350 - 384.
\bibitem{CV2} S.D. Cutkosky and Pham An Vinh, Ramification of local rings along valuations,  Journal of pure and applied algebra 219 (21015), 2489 - 2505.
\bibitem{CT1} S.D. Cutkosky and B. Teissier, Semigroups of valuations on local rings,
Mich. Math. J. 57 (2008), 173 - 193.
\bibitem{dJ} A.J. de Jong, Smoothness, semi-stablility and alterations, Inst. Hautes Etudes Sci. Publ. math. 83 (1996) 51 -93.
\bibitem{E} O. Endler, Valuation Theory, Springer Verlag, New York, Heidelberg, Berlin, 1972.
\bibitem{GAST} F.J. Herrera Govantes, M.A. Olalla Acosta, M. Spivakovsky, B. Teissier, Extending valuations to formal completions, in {\it Valuation Theory in Interaction}, 
252 - 265 EMS Ser. Congr. Rep, Eur. Math. Soc., Zurich, 2014.
\bibitem{GHK} L. Ghezzi, Huy T\`ai H\`a and O. Kashcheyeva, Toroidalization of generating sequences in dimension two
function fields, J. Algebra 301 (2006) 838-866.
\bibitem{GK} L. Ghezzi and O. Kashcheyeva, Toroidalization of generating sequences in dimension two
function fields of positive characteristic, J. Pure Appl. Algebra 209 (2007), 631-649.
\bibitem{EGAII} A. Grothendieck, and A. Dieudonn\'e, El\'ements de g\'eom\'etrie alg\'ebrique II, Publ. Math. IHES 8 (1961).
\bibitem{EGAIII} A. Grothendieck, and A. Dieudonn\'e, El\'ements de g\'eom\'etrie alg\'ebrique III,  Publ. Math. IHES 11 (1961).
\bibitem{EGAIV} A. Grothendieck, and A. Dieudonn\'e, El\'ements de g\'eom\'etrie alg\'ebrique IV, vol. 2, Publ. Math. IHES 24 (1965).
\bibitem{H} R. Hartshorne, Algebraic Geometry, Springer, New York, Heidelberg, Berlin, 1977.
\bibitem{Ha} H. Hauser, On the problem of resolution of singularities in positive characteristic (or: a proof we are waiting for) Bull. Amer. Math. Soc. 47 (2010), 1-30.
\bibitem{H1} H. Hironaka, Three key theorems on infinitely near singularities, Singularit\'es Franco-Japonaises, 87 - 126, S\'emin. Congr. 10 Soc. Math. France, Paris 2005.
\bibitem{KK} H. Knaf and F.-V. Kuhlmann, Every place admits local uniformization in a finite extension of the function field, Adv. Math. 221 (2009), 428 - 453.
\bibitem{KK2} H. Knaf and F-V. Kuhlmann, Abhyankar places admit local uniformization in any characteristic, Ann. Sci. \'Ecole Norm. Sup. 38 (2005), 833 - 846.
\bibitem{Ku1} F.-V. Kuhlmann,   Valuation theoretic and model theoretic aspects of local uniformization, in Resolution of Singularities -
A Research Textbook in Tribute to Oscar Zariski, H. Hauser, J. Lipman, F. Oort, A. Quiros (es.), Progress in Math. 181, Birkh\"auser (2000), 4559 - 4600.
\bibitem{Ku2} F.-V. Kuhlmann, Value groups, residue fields, and bad places of algebraic function fields, Trans. Amer. Math. Soc. 356 (2004), 363 - 395.
\bibitem{Ku3} F.-V. Kuhlmann, A classification of Artin Schreier defect extensions and a characterization of defectless fields,
Illinois J. Math. 54 (2010), 397 - 448.

\bibitem{Li} J. Lipman,  Desingularization of 2-dimensional schemes, Annals of Math. 107 (1978), 115 -- 207.
\bibitem{M} S. MacLane,  A construction for absolute values in polynomial rings, Trans. Amer.
Math. Soc. 40 (1936), 363 - 395.
\bibitem{MS} S. MacLane and O. Schilling,  Zero-dimensional branches of rank 1 on algebraic
varieties, Annals of Math. 40 (1939), 507 - 520.
\bibitem{Mi} J.S. Milne, \'Etale cohomology, Princeton University Press, 1980.
\bibitem{Mo} M. Moghaddam, A construction for a class of valuations of the field $K(X_1,\ldots, X_d,Y)$ with large value group, J. Algebra 319, 7 (2008), 2803-2829.
\bibitem{R} M. Raynaud, Anneaux Locaux, Hens\'eliens, Springer Verlag, Berlin, Heidelberg, New York, 1970.
\bibitem{S} J.P. Serre, Corps Locaux, Hermann, 1962. 
\bibitem{Sp} M. Spivakovsky, Valuations in function fields of surfaces, Amer. J. Math. 112 (1990), 107 - 156.
 \bibitem{T1} B. Teissier,   Valuations, deformations and toric geometry, Valuation theory and its applications II,
F.V. Kuhlmann, S. Kuhlmann and M. Marshall, editors, Fields
Institute Communications 33 (2003), Amer. Math. Soc., Providence, RI, 361
-- 459.
\bibitem{T2} B. Teissier, Overweight deformations of affine toric varieties and local uniformization, in Valuation theory in interaction, Proceedings of the second international conference on valuation theory, Segovia-El Escorial, 2011. Edited by A. Campillo, F-V- Kehlmann and B. Teissier. European Math. Soc. Publishing House, Congress Reports Series, Sept. 2014, 474 - 565.
\bibitem{Te} M. Temkin, Inseparable local uniformization, J. Algebra 373 (2013), 65 - 119.
\bibitem{Vaq} M. Vaqui\'e, Famille admissible de valuations et d\'efaut d'une extension, J. algebra 311 (2007), 859 - 876.
\bibitem{ZS1} O. Zariski and P. Samuel, Commutative Algebra Volume I, Van Nostrand, 1958.
\bibitem{ZS2} O. Zariski and P. Samuel, Commutative Algebra Volume II, Van Nostrand, 1960.
\end{thebibliography}
\end{document}